\theoremstyle{plain} 
\newtheorem{thm}{Theorem}[section] 
\newtheorem{lem}[thm]{Lemma} 
\newtheorem{cor}[thm]{Corollary} 
\newtheorem{prop}[thm]{Proposition} 
\newtheorem{rmk}[thm]{Remark}
\def\D{\mathrm{D}}
\def\T{\mathrm{T}} 
\def\c{\mathrm{c}} 
\def\d{\mathrm{d}}
\def\Nset{\mathbb{N}} 
\def\Pset{\mathbb{P}} 
\def\Rset{\mathbb{R}} 
\def\Tset{\mathbb{T}} 
\def\Sset{\mathbb{S}} 
\def\Zset{\mathbb{Z}}
\def\epsilon{\varepsilon}
\numberwithin{equation}{section}
\begin{document} 


\title[Feedback control of the Kuramoto model I]%
{Feedback control of the Kuramoto model defined on uniform graphs I:
 Deterministic natural frequencies}
%
%
\author{Kazuyuki Yagasaki} 
\address{Department of Applied Mathematics and Physics, 
Graduate School of Informatics, Kyoto University, 
Yoshida-Honmachi, Sakyo-ku, Kyoto 606-8501, JAPAN} 

\email{yagasaki@amp.i.kyoto-u.ac.jp} 

\date{\today} 
\subjclass[2020]{34C15; 34H05; 45J05; 34D06; 34C23; 37G10; 34D20; 45M10; 05C90} 
\keywords{Kuramoto model; feedback control; continuum limit; synchronization;
 bifurcation; stability} 

\begin{abstract}
We consider feedback control of the Kuramoto model on uniform graphs,
where the natural frequencies are uniformly spaced and the graphs may be
complete, random dense or random sparse.
The control objective is to drive all nodes to the same constant rotational motion.
For the case of node number $n\ge 3$,
 we establish the existence of exactly $2^n$ synchronized solutions in the controlled Kuramoto model (CKM)
 and their saddle-node and pitchfork bifurcations, and determine their stability.
In particular,  we show that
 only a solution converging to the desired motion in the limit of infinite feedback gain is stable 
 and the others are unstable.
Based on the previous results,
 it is shown that (i) the solution to which the stable synchronized solution in the CKM converges as $n\to\infty$
 is always asymptotically stable in the continuous limit (CL) if it exists,
 and (ii) the asymptotically stable solution of the CL
 captures the asymptotic behavior of the CKM when the node number is sufficiently large,
 even if the graphs are random dense or sparse.
We give numerical simulations of the CKM
 on complete simple graphs and on uniform random dense and sparse graphs
 to demonstrate our theoretical results.

\end{abstract}

\maketitle 

\section{Introduction}

\subsection{Kuramoto model defined on graphs}

We consider the Kuramoto model (KM) \cite{K75,K84}
 on graphs $G_n$, $n\in\Nset$, with natural frequencies depending on the nodes,
\begin{align} 
\frac{\d}{\d t}u_i^n(t) = 
&\omega_i^n+ \frac{K}{n \alpha_n} \sum_{j=1}^{n} 
w_{ij}^n \sin\left(u_j^n(t)-u_i^n(t)\right),\quad
i\in[n]:=\{1,2,\ldots,n\},
\label{eqn:dsys0} 
\end{align} 
where $n$ is the node number;
 $u_i^n(t)\in\Sset^1=\Rset/2\pi\Zset$ and $\omega_i^n$,
 respectively, represent the phase of the $i$th oscillator and its natural frequency;
 $K>0$ controls the strength of the coupling;
 and $\alpha_n$ is a scaling factor chosen to be one  if $G_n$ is dense.
Here $G_n=\langle V(G_n),E(G_n),W(G_n)\rangle$, $n\in\Nset$, 
 represent a sequence of weighted graphs,
 where $V(G_n)=[n]$ and $E(G_n)$ are the sets of nodes
 and edges, respectively, and $W(G_n)$ is an $n\times n$ 
 weight matrix given by 
\begin{equation*} 
(W(G_n))_{ij}= 
\begin{cases} 
w_{ij}^n & \mbox{if $(i,j)\in E(G_n)$};\\ 
0 &\rm{otherwise.} 
\end{cases} 
\end{equation*} 
The edge set is expressed as
\begin{equation*}
E(G_n)=\{(i,j)\in[n]^2 \mid (W(G_n))_{ij} \neq 0\},
\end{equation*}
where the ordered pair $(i,j)$ denotes an edge from $j$ to $i$.
This edge is also written as $j\to i$ and loops are allowed.
When $W(G_n)$ is symmetric, 
 $G_n$ is regarded as an undirected weighted graph 
 and each edge is written as $i\sim j$ rather than $j\to i$. 
When $G_n$ is a simple graph (resp. a random graph),
 $W(G_n)$ is a $0$-$1$ matrix (resp. a random matrix).
We call the graph $G_n$ \emph{dense} 
 if $\lim_{n\to\infty}\# E(G_n)/(\# V(G_n))^2>0$, 
 and \emph{sparse}
 if $\lim_{n\to\infty}\# E(G_n)/(\# V(G_n))^2=0$. 
 
The weight matrix $W(G_n)$ is determined as follows:
Let $I=[0,1]$ and let $W\in L^2(I^2)$ be a measurable function.
If $G_n$, $n\in\Nset$, are deterministic dense graphs, then
\begin{equation}
w_{ij}^n = \langle W\rangle_{ij}^n
:= n^2 \int_{I_i^n \times I_j^n}W(x,y) \d x\d y,
\label{eqn:ddg}
\end{equation}
where
\[
I_i^n:=\begin{cases}
[(i-1)/n,i/n) & \mbox{for $i<n$};\\
[(n-1)/n,1] & \mbox{for $i=n$}.
\end{cases}
\]
If $G_n$, $n\in\Nset$, are random dense graphs, 
then $w_{ij}^n=1$ with probability 
\begin{equation}
\mathbb{P}(j \rightarrow i) = \langle W\rangle_{ij}^n, 
\label{eqn:rdg}
\end{equation}
where the range of $W$ is contained in $I$.
If $G_{n}$, $n\in\Nset$, are random sparse graphs, 
then $w_{ij}^n=1$ with probability 
\begin{equation} 
\mathbb{P}(j \rightarrow i) = \alpha_n 
\langle \tilde{W}_n \rangle_{ij}^n, \quad 
\tilde{W}(x,y) := \alpha_n^{-1} \wedge W(x,y), 
\label{eqn:rsg} 
\end{equation} 
where $W$ is a nonnegative function,
 $\alpha_n =n^{-\gamma}$ with $\gamma\in(0,\frac{1}{2})$,
 and $a\wedge b=\min(a,b)$ for $a,b\in\Rset$.
For the random cases,
 the graph $G_n$ is a \emph{Erd\"os–R\'enyi} model.
In both cases, the connection probabilities are determined
 by a measurable function $W(x,y)$ defined on $I^2$,
 which is usually called a \emph{graphon}\cite{L12}.

Such coupled oscillators have attracted considerable attention in recent years
 and have been investigated intensively.
Many mathematical models based on them
 have been proposed and used to describe collective phenomena
 in a wide range of fields, including natural sciences, social sciences and engineering.
In particular, the KM occupies a central position
 and has led to numerous generalizations
 such as models with different graph structures,
 coupling functions and natural frequencies.
Among the phenomena exhibited by these models,
 synchronization has attracted particular interest.
We refer to \cite{S00,PRK01,ABVRS05,ADKMZ08,DB14,PR15,RPJK16}
 for surveys on coupled oscillator networks,
 including the KM and its generalizations.
For the classical KM with uniformly spaced natural frequencies,
 all synchronized solutions and their stability were completely characterized in \cite{Y24a}.
Related stability results for KMs on general coupling graphs
 can be found, for example, in \cite{DJD19, HKR16}.

\subsection{Continuum limits and graphons}
In the previous work \cite{IY23},
 coupled oscillator networks such as \eqref{eqn:dsys0}
 were analyzed via their continuum limits (CLs).
For \eqref{eqn:dsys0}, the corresponding CL is given by
\begin{equation}
\frac{\partial}{\partial t} u(t,x)
=\omega(x)+K \int_I W(x,y) \sin(u(t,y)-u(t,x)) \d y,\quad x\in I,
\label{eqn:csys0}
\end{equation}
provided that the natural frequencies are generated from an $L^2$ function $\omega$ on $I$ by
\begin{equation}
\omega_i^n=n\int_{I_i^n}\omega(x)dx, \quad i \in [n].
\label{eqn:omegai0}
\end{equation}
We refer to $\omega(x)$ as a \emph{frequency function}
 and assume that it is not a constant function.
More general situations,
 where the oscillator networks depend on multiple graphs,
 each of which may be deterministic or random, and dense or sparse,
 were treated in \cite{IY23}.
In \cite{Y24a},
 some stability results for the coupled oscillator systems and their CLs were refined.
Earlier related results for single-graph networks with identical natural frequencies
 were given in \cite{KM17,M14a,M14b,M19},
 although they contain no stability results for solutions
 and do not apply to \eqref{eqn:dsys0} and \eqref{eqn:csys0}.

A CL was introduced in \cite{E85} without a rigorous mathematical justification,
 for the classical KM, which is defined on a single complete simple graph
 and has node-dependent natural frequencies.
The situation where the natural frequencies are equally placed,
 for arbitrary odd node number $n\ge 3$, was analyzed in detail
 much more recently in \cite{Y24a}.
In particular, it was shown there that
 the bifurcations and stability of synchronized solutions in the classical KM
 differ substantially from those in its CL.
Furthermore, bifurcations of completely synchronized and twisted solutions in CLs
 were studied in \cite{Y24b,Y24c,Y24d} for KMs of identical oscillators.
The models considered there have two-mode interactions depending on two graphs
 or are defined on nearest-neighbor graphs in the presence or absence of feedback control.
These studies utilized the center manifold reduction technique \cite{HI11},
 which is a standard tool in the theory of dynamical systems \cite{GH83,W03}.
Similar CLs were also used in \cite{GHM12,M14c,MW17,WSG06}
 for nonlocally coupled KMs with a single or zero natural frequency.

\subsection{Related work on feedback control of coupled oscillators}
The control problem of coupled oscillator networks is important
 not only from a theoretical viewpoint but also for applications,
 and has attracted much attention \cite{CKM13,DNM22,M15}.
Such studies are also significant in practice,
 since situations frequently arise
  in which all coupled oscillators are desired to exhibit the same rhythmic motion such as heart beats.
In particular, for Kuramoto-type models,
 various control strategies have been proposed depending on the network structure
 and the form of control inputs.

For instance, coupled oscillator networks defined on multiple graphs
 have been studied from the viewpoint of control \cite{GC20,GC21,IY23,Y24b}.
In these works, additional coupling graphs are introduced
 to avoid undesired synchronization,
 which may otherwise lead to traffic congestion or collapse of networked systems.
 In particular, in \cite{Y24b}, it was shown that synchronized solutions,
 which are intended to be avoided, can be unstable,
 and their stability and bifurcations were analyzed theoretically.

On the other hand, feedback control of synchronized states
 in the KM on deterministic dense, random dense and random sparse graphs
 has been studied numerically or theoretically in \cite{SA15,LR17,WL20,IY23,Y24d}.
In these studies, a prescribed desired motion or position is incorporated into the dynamics
 as an external input, which is often periodic.
Such a periodic input is referred to as a \emph{pacemaker} in \cite{LR17,WL20}.
However, the stability and bifurcations of the desired synchronized state
 were not clarified theoretically in \cite{SA15,LR17,WL20,IY23},
 although those of twisted solutions chosen as desired states were analyzed theoretically in \cite{Y24d}.

\subsection{Object of the paper}
In this paper, we deal with the case of uniform graphs, i.e., $W(x,y)=p$,
 which  may be deterministic or random, and dense or sparse.
We attempt to control the KM \eqref{eqn:dsys0}
 so that each node exhibits the desired motion $V(t)=V_1t+V_0$, 
 where $V_1,V_0$ are constants,
 although more general desired motion can be treated similarly.
So we add the control force $b_1 \sin(V(t)-u_i^n(t))+b_0$  to each node as
\begin{align} 
\frac{\d}{\d t}u_i^n(t)=&\omega_i^n+ \frac{K}{n \alpha_n} \sum_{j=1}^{n} 
w_{ij}^n \sin\left(u_j^n(t)-u_i^n(t)\right)\notag\\
&+b_1 \sin(V(t)-u_i^n(t))+b_0,\quad i\in[n],
\label{eqn:dsys} 
\end{align}
where the constants $b_1$ and $b_0$
 represent the feedback gain and stationary input, respectively.
Here we include the stationary input $b_0$ for simplicity of analysis.
The corresponding CL is given by
\begin{align}
\frac{\partial}{\partial t} u(t,x) = 
& \omega(x)+pK \int_{I}\sin(u(t,y)-u(t,x)) \d y\notag\\
&+b_1 \sin(V(t)-u(t,x))+b_0.
\label{eqn:csys}
\end{align}
A restricted case was studied
 and some numerical simulation results were provided in \cite{IY23}.
We remark that the CL \eqref{eqn:csys} is the same
 even if the graphs $G_n$, $n\in\Nset$, are complete simple, random dense or random sparse. 

Let 
\begin{equation}
b_0 = V_1- \int_I \omega(x) \d x.
\label{eqn:conb0}
\end{equation}
We see that the CL \eqref{eqn:csys} admits two particular solutions
\begin{equation}
u(t,x) = U(x)+V(t)\quad\mbox{and}\quad
u(t,x) = \pi-U(x)+V(t),
\label{eqn:csol}
\end{equation}
where
\begin{equation}
U(x) = \arcsin\left(\frac{\omega(x)-V_1+b_0}{pKC+b_1}\right),
\label{eqn:U}
\end{equation}
if there exist constants $C>0$ and $C<0$, respectively, such that
\begin{equation}
C = \pm\int_I \sqrt{1-\left(\frac{\omega(x)-V_1+b_0}{pKC+b_1}\right)^2} \d x,
\label{eqn:C}
\end{equation}
where the upper and lower signs, respectively, 
 correspond to  the cases $C>0$ and $C<0$.
Note that
\begin{equation}
\sup_{x\in I}\,|\omega(x)-V_1+b_0|\leq pKC+b_1.
\label{eqn:conc}
\end{equation}
In particular, the solution \eqref{eqn:csol} is continuous in $t$ and $x$ if $\omega(x)$ is continuous.
On the other hand, let
\begin{equation}
b_0 = V_1- \frac{1}{n} \sum_{i=1}^n \omega_i^n.
\label{eqn:b0}
\end{equation}
When the graphs $G_n$, $n\in\Nset$, are complete,
 i.e., $w_{ij}^n=p$, $i,j\in[n]$,
 the controlled Kuramoto model (CKM) \eqref{eqn:dsys} has a particular solution
\begin{equation}
u_i^n(t) = U_i^n+V(t)\quad\mbox{and}\quad
u_i^n(t) = \pi-U_i^n+V(t),
\label{eqn:dsol}
\end{equation}
where
\[
U_i^n = \arcsin\left(\frac{\omega_i^n-V_1+b_0}{pKC_\D+b_1}\right),
\]
if there exist constants $C_\D>0$ and $C_\D<0$, respectively, such that
\begin{equation}
C_\D = \pm\frac{1}{n} \sum_{i=1}^{n} \sqrt{1-\left(\frac{\omega_i^n-V_1+b_0}{pKC_\D+b_1}\right)^2},
\label{eqn:CD0}
\end{equation}
where the upper and lower signs, respectively,
 correspond to  the cases $C_\D>0$ and $C_\D<0$.
Note that
\begin{equation}
\max_{i\in[n]}\,|\omega_i^n-V_1+b_0|\leq pKC_\D+b_1.
\label{eqn:cond}
\end{equation}
See Appendix~A for the derivation of \eqref{eqn:csol} and \eqref{eqn:dsol}.
In particular, as $b_1\to\infty$, the solutions  \eqref{eqn:csol} and \eqref{eqn:dsol},
 respectively, tend to
\[
u(t,x)=V(t)\quad\mbox{and}\quad 
u_i^n(t) = V(t),
\]
which coincide with the desired motions completely.
We will show below that these are far from all synchronized solutions:
 the number of such solutions in the CKM \eqref{eqn:dsys} is $2^n$,
 whereas the CL \eqref{eqn:csys} has uncountably many synchronized solutions.

Specifically, we choose
\begin{equation}
\omega(x)=a(x-\tfrac{1}{2}),
\label{eqn:omega}
\end{equation}
where $a>0$ is a constant,
 as the frequency function as in \cite{Y24a},
 so that by \eqref{eqn:omegai0}
\begin{equation}
\omega_i^n=\frac{a}{2n}(2i-n-1),\quad
i\in[n],
\label{eqn:omegai}
\end{equation}
i.e., the natural frequencies are uniformly spaced.
In addition, by \eqref{eqn:conb0} and \eqref{eqn:b0} $b_0=V_1$.
Under the general assumption that the node number $n$ satisfies $n\ge 3$,
 we prove that for $a,pK>0$ fixed
 the solution \eqref{eqn:dsol} to the CKM \eqref{eqn:dsys}
 suffers a saddle-node bifurcation at some value of $b_1$
 where a pair of stable and unstable ones are born,
 and the solution \eqref{eqn:csol} is always asymptotically stable
 in the CL \eqref{eqn:csys} whenever it exists,
 as in the uncontrolled KM \eqref{eqn:dsys0} and its CL \eqref{eqn:csys0}
 (see \cite{Y24a}).
They are also proven to be the only synchronized solutions
 that are or can be stable in the CKM \eqref{eqn:dsys} and CL \eqref{eqn:csys}, respectively,
 when $b_1>0$ and $K>0$ is small enough for them not to exist for $b_1,b_0=0$.
Moreover, we show based on the results of \cite{IY23,Y24a}
 that the solution \eqref{eqn:csol} to the CL \eqref{eqn:csys} behaves
 as it is an asymptotically stable one in the CKM \eqref{eqn:dsys} for $n>0$ sufficiently large,
 even if the graphs are random dense or sparse.
In the companion paper \cite{KY24},
 we discuss a similar control problem for the KM \eqref{eqn:dsys0}
 with random natural frequencies.

\subsection{Outline of the paper}
The outline of this paper is as follows.
In Section~2, we review fundamental theoretical results
 from \cite{IY23} and \cite{Y24a} for coupled nonlinear oscillator networks
 in the context of \eqref{eqn:dsys} and \eqref{eqn:csys}.
These results play an important role in the analysis in Sections 4 and 5.
In Section~3, we present our main results
 on synchronized solutions of the CKM \eqref{eqn:dsys} on complete graphs.
The CKM \eqref{eqn:dsys} is transformed into an equivalent autonomous system,
 all its equilibria, the number of which is exactly $2^n$, are characterized,
 and their bifurcations and stability are determined.
In particular, we prove that the equilibrium corresponding to the solution \eqref{eqn:dsol}
 undergoes a saddle-node bifurcation, at which a pair of stable and unstable equilibria is created,
 while all other equilibria are unstable when $b_1>0$ and $K>0$ is sufficiently small
 so that no equilibrium exists for $b_1=b_0=0$.
In Section~4, we analyze the CL \eqref{eqn:csys}
 and show that it has uncountably many synchronized solutions.
Moreover, it is proved  that
 the solution $u=U(x)+V(t)$ in \eqref{eqn:csol} is always asymptotically stable whenever it exists,
 and all other continuous and discontinuous synchronized solutions are unstable
 when $b_1>0$ and $K>0$ is sufficiently small
 so that no synchronized solution exists for $b_1=b_0=0$.
In Section~5, we discuss the behavior of the CKM \eqref{eqn:dsys}
 on random dense and sparse graphs, using the results of Section~4 for the CL \eqref{eqn:csys},
 based on the fundamental theory reviewed in Section~2.
Finally, in Section~6,
 we give numerical simulations of the CKM \eqref{eqn:dsys}
 on complete simple graphs and on uniform random dense and sparse graphs,
 to demonstrate our theoretical results.

\section{Previous Fundamental Results}
We review the fundamental theoretical results from \cite{IY23,Y24a}
 in the setting of the CKM \eqref{eqn:dsys} and the CL \eqref{eqn:csys}.
For the proofs and additional details, 
 we refer to Section~2 and Appendices~A and B of \cite{IY23}, and to Section~2 of \cite{Y24a}.

We begin with the initial value problem (IVP) of the CL \eqref{eqn:csys}.
Let $g\in L^2(I)$ and write $\mathbf{u}:\Rset\to L^2(I)$ 
 for an $L^2(I)$-valued function on $\Rset$.
From Theorem~2.1 of \cite{IY23} we have the following.

\begin{thm}
\label{thm:2a}
There exists a unique solution $\mathbf{u}(t)\in C^1(\Rset;L^2(I))$
 to the IVP of \eqref{eqn:csys} with 
\begin{equation*}
u(0,x)=g(x).
\end{equation*}
Moreover, the solution depends continuously on $g$.
\end{thm}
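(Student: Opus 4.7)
The plan is to view the IVP as an abstract ODE $\dot{\mathbf u}(t)=F(t,\mathbf u(t))$ in the Banach space $L^2(I)$, where the nonlinear vector field is
\[
F(t,\mathbf u)(x)=\omega(x)+pK\int_I\sin(u(y)-u(x))\,\d y+b_1\sin(V(t)-u(x))+b_0,
\]
and then apply the standard Picard--Lindel\"of theorem in Banach spaces. Closing the proof under this scheme reduces to three verifications: that $F$ maps $\Rset\times L^2(I)$ into $L^2(I)$, that it is globally Lipschitz in $\mathbf u$ uniformly on compact time intervals, and that the resulting contraction argument gives both uniqueness and continuous dependence on $g$.

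First I would check well-definedness. Since $\omega\in L^2(I)$ by the standing assumption and $b_0$ is constant, only the two sinusoidal terms need attention. Both $\sin(V(t)-u(x))$ and $\int_I\sin(u(y)-u(x))\,\d y$ are bounded pointwise in $x$ by $1$, so they lie in $L^\infty(I)\subset L^2(I)$. Consequently $F(t,\mathbf u)\in L^2(I)$ for every $\mathbf u\in L^2(I)$ and every $t\in\Rset$, and $t\mapsto F(t,\mathbf u)$ is continuous because $V(t)=V_1t+V_0$ is continuous.

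Next I would establish the global Lipschitz estimate. Using $|\sin a-\sin b|\le|a-b|$, for any $\mathbf u_1,\mathbf u_2\in L^2(I)$ and any $x\in I$,
\[
\bigl|\sin(V(t)-u_1(x))-\sin(V(t)-u_2(x))\bigr|\le|u_1(x)-u_2(x)|,
\]
and
\[
\Bigl|\int_I\bigl(\sin(u_1(y)-u_1(x))-\sin(u_2(y)-u_2(x))\bigr)\,\d y\Bigr|
\le\int_I|u_1(y)-u_2(y)|\,\d y+|u_1(x)-u_2(x)|.
\]
Applying Cauchy--Schwarz to the integral on $I$ and then taking $L^2$ norms in $x$ yields
\[
\|F(t,\mathbf u_1)-F(t,\mathbf u_2)\|_{L^2}\le L\|\mathbf u_1-\mathbf u_2\|_{L^2},\qquad L:=2pK+b_1,
\]
uniformly in $t$. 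Because $L$ is independent of $t$ and of $\mathbf u_j$, the usual Picard iteration on any interval $[-T,T]$ is a contraction on a suitable subset of $C([-T,T];L^2(I))$, giving a unique solution $\mathbf u\in C^1(\Rset;L^2(I))$ by concatenation (no blow-up is possible since the Lipschitz bound is global).

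Finally, continuous dependence on $g$ follows from Gr\"onwall: if $\mathbf u^{(1)},\mathbf u^{(2)}$ are solutions with initial data $g_1,g_2$, then
\[
\|\mathbf u^{(1)}(t)-\mathbf u^{(2)}(t)\|_{L^2}\le\|g_1-g_2\|_{L^2}+L\int_0^{|t|}\|\mathbf u^{(1)}(s)-\mathbf u^{(2)}(s)\|_{L^2}\,\d s,
\]
so $\|\mathbf u^{(1)}(t)-\mathbf u^{(2)}(t)\|_{L^2}\le e^{L|t|}\|g_1-g_2\|_{L^2}$. The only step that requires real care is the Lipschitz estimate for the nonlocal coupling term: one must avoid losing an unbounded factor when passing from the pointwise bound to the $L^2$ norm, which the Cauchy--Schwarz step above handles cleanly. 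Everything else is routine Banach-space ODE theory, and in fact the statement is essentially a specialization of Theorem~2.1 of~\cite{IY23}, which may be invoked directly.
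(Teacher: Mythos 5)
Your proposal is correct and follows essentially the same route as the source: the paper does not prove Theorem~\ref{thm:2a} itself but imports it from Theorem~2.1 of \cite{IY23}, whose argument is exactly the Picard--Lindel\"of/Gr\"onwall scheme you describe, based on the global Lipschitz continuity of the vector field in $L^2(I)$ (with constant $2pK+|b_1|$). Your verification of the nonlocal coupling term via Cauchy--Schwarz is the key estimate and is carried out correctly.
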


We next state a theorem
 concerning the convergence of solutions of the CKM \eqref{eqn:dsys}
 to those of the CL \eqref{eqn:csys}.
The vector field of \eqref{eqn:dsys}
 is Lipschitz continuous with respect to the variables $u_i^n$, $i\in[n]$. 
Hence, by a standard result of ordinary differential equations
 (see, e.g., Theorem~2.1 of Chapter~1 of \cite{CL55}),
 the IVP of the CKM \eqref{eqn:dsys} admits a unique solution.
Let
\[
\mathbf{u}_n(t)=(u_1^n(t),u_2^n(t),\ldots,u_n^n(t))
\]
be such a solution. 
We identify it with the $L^2(I)$-valued step function
\begin{equation}
\mathbf{u}_n(t) = \sum_{i=1}^{n} u_i^n(t) \mathbf{1}_{I_i^n},
\label{eqn:un}
\end{equation}
where $\mathbf{1}_{I_i^n}$ is the characteristic function of $I_i^n$ for $i\in[n]$.
Let $\|\cdot\|$ denote the norm in $L^2(I)$.
From Theorem~2.3 of \cite{IY23}  (see also Theorem~2.2 of \cite{Y24a})
 we have the following.

\begin{thm}
\label{thm:2b}
If $\mathbf{u}_n(t)$ is the solution to the IVP
 of the CKM \eqref{eqn:dsys} with the initial condition
\[
\lim_{n\to\infty} \|\mathbf{u}_n(0)-\mathbf{u}(0)\|=0\quad\mbox{a.s.},
\]
 then for any $\tau>0$ we have
\[
\lim_{n \rightarrow \infty}\max_{t\in[0,\tau]}\|\mathbf{u}_n(t)-\mathbf{u}(t)\|=0
\quad\mbox{a.s.},
\]
where $\mathbf{u}(t)$ represents the solution to the IVP of the CL \eqref{eqn:csys}.
\end{thm}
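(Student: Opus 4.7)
The plan is to lift the CKM \eqref{eqn:dsys} to an $L^2(I)$-valued evolution equation of the same functional form as the CL \eqref{eqn:csys}, and then bound the $L^2$-distance of the two solutions by Gronwall's inequality. For $x\in I_i^n$, the embedded solution $\mathbf{u}_n(t)$ defined in \eqref{eqn:un} satisfies
\begin{equation*}
\tfrac{\partial}{\partial t}\mathbf{u}_n(t,x)
=\omega_n(x)+K\int_I W_n(x,y)\sin\bigl(\mathbf{u}_n(t,y)-\mathbf{u}_n(t,x)\bigr)\d y
+b_1\sin(V(t)-\mathbf{u}_n(t,x))+b_0,
\end{equation*}
where $\omega_n(x)=\sum_i\omega_i^n\mathbf{1}_{I_i^n}(x)$ and $W_n(x,y)=\alpha_n^{-1}\sum_{i,j}w_{ij}^n\mathbf{1}_{I_i^n\times I_j^n}(x,y)$. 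Subtracting this from the CL \eqref{eqn:csys} and using that $\sin$ is $1$-Lipschitz, together with the Cauchy--Schwarz inequality for the coupling integral, yields an estimate of the form
\begin{equation*}
\|\mathbf{e}_n(t)\|\leq\|\mathbf{e}_n(0)\|+\int_0^t\bigl(L\|\mathbf{e}_n(s)\|+R_n(s)\bigr)\d s,
\end{equation*}
where $\mathbf{e}_n(t)=\mathbf{u}_n(t)-\mathbf{u}(t)$, $L$ is a Lipschitz constant depending on $K,p,b_1$, and $R_n(s)$ collects the consistency errors between the two right-hand sides when both are evaluated at the common function $\mathbf{u}(s)$.

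I would then split $R_n(s)$ into three pieces: (i) the frequency approximation error $\|\omega_n-\omega\|$, which vanishes by \eqref{eqn:omegai0}; (ii) a deterministic discretization error arising from the piecewise constant approximation of the integrand $W(x,y)\sin(u(s,y)-u(s,x))$ on the product grid $\{I_i^n\times I_j^n\}$, which vanishes by dominated convergence using continuity of $\mathbf{u}(s)$ in $L^2$; and (iii) a graphon error of the form $\|\int_I(W_n(x,y)-p)\sin(u(s,y)-u(s,x))\,\d y\|$ coming from replacing $W(x,y)=p$ by the possibly random kernel $W_n$. Applying the integral Gronwall lemma produces
\begin{equation*}
\max_{t\in[0,\tau]}\|\mathbf{e}_n(t)\|\leq\mathrm{e}^{L\tau}\Bigl(\|\mathbf{e}_n(0)\|+\int_0^\tau R_n(s)\,\d s\Bigr),
\end{equation*}
so it suffices to show each contribution to $R_n$ tends to zero almost surely and then invoke dominated convergence on $[0,\tau]$.

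For deterministic dense graphs this is routine, since \eqref{eqn:ddg} gives $\langle W\rangle_{ij}^n=p$ exactly, leaving only the classical Riemann-type discretization error. For random dense graphs one regards $W_n-p$ as a mean-zero random kernel whose entries are independent bounded random variables, and applies a concentration inequality such as Bernstein or Hoeffding to control $\int_I(W_n(x,y)-p)\sin(\cdot)\,\d y$ in $L^2_x$; a Borel--Cantelli argument then upgrades this to almost-sure convergence. The random sparse case \eqref{eqn:rsg} is analogous after the $\alpha_n=n^{-\gamma}$ rescaling, but the per-entry variance of $W_n$ now inflates like $\alpha_n^{-1}$; since $\gamma<\tfrac{1}{2}$, Bernstein's inequality together with Borel--Cantelli still yields the required almost-sure vanishing. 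This sparse estimate is the step I expect to be the main technical obstacle; once it is in place, combining everything with the hypothesis $\|\mathbf{e}_n(0)\|\to 0$ a.s.\ and Gronwall delivers the theorem.
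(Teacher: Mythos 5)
The paper does not prove Theorem~\ref{thm:2b} itself but imports it from Theorem~2.3 of \cite{IY23}, and your Gronwall-plus-concentration outline is essentially the argument used there: lift the CKM to an $L^2(I)$-valued equation with piecewise-constant kernel $W_n$, split the consistency error into frequency, discretization and graphon parts, and handle the random dense/sparse kernels by Bernstein-type bounds and Borel--Cantelli. The only points needing more care than you give them are that the random graphon error must be controlled uniformly in $s\in[0,\tau]$ (e.g.\ via a cut-norm or operator-norm concentration bound for the kernel difference, rather than a pointwise-in-$s$ estimate, since the Borel--Cantelli null set would otherwise depend on $s$), and that in the sparse case $W_n$ is unbounded of order $\alpha_n^{-1}=n^{\gamma}$, so the Lipschitz constant $L$ in your Gronwall inequality is itself random and its almost-sure boundedness requires a separate concentration estimate for the normalized row sums.
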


We next discuss the stability of solutions to \eqref{eqn:dsys}
 and \eqref{eqn:csys}. 
Modifying the proof of Theorem~2.5 in \cite{IY23} slightly
 (see also Theorem~2.3 of \cite{Y24a}),
 we obtain the following.

\begin{thm}
\label{thm:2c}
Suppose that the CKM \eqref{eqn:dsys} and CL \eqref{eqn:csys}
 have solutions $\bar{\mathbf{u}}_n(t)$ and $\bar{\mathbf{u}}(t)$, respectively, such that
\begin{equation}
\lim_{n\to\infty}\|\bar{\mathbf{u}}_n(t)-\bar{\mathbf{u}}(t)\|=0\quad\mbox{a.s.}
\label{eqn:thm2c}
\end{equation}
for any $t\in[0,\infty)$.
Then the following hold$\,:$
\begin{enumerate}
\setlength{\leftskip}{-1.8em}
\item[\rm(i)]
If for any $\epsilon>0$, there exist $\delta_1>0$
 such that for $n>0$ sufficiently large,
 any solution $u_i^n(t)$, $i\in[n]$, to the CKM \eqref{eqn:dsys} with
\[
|u_i^n(0)-\bar{u}_i^n(0)|<\delta_1,
\quad i\in[n],
\] 
satisfies
\[
|u_i^n(t)-\bar{u}_i^n(t)|<\epsilon,
\quad i\in[n],\quad\mbox{a.s.}
\]
for any $t\in[0,\infty)$, then $\bar{\mathbf{u}}(t)$ is stable.
Moreover, if there exists $\delta_2>0$
 such that for $n>0$ sufficiently large,
 any solution $u_i^n(t)$, $i\in[n]$, to the CKM \eqref{eqn:dsys} with
\[
|u_i^n(0)-\bar{u}_i^n(0)|<\delta_2,\quad i\in[n],
\] 
converges to $\bar{u}_i^n(t)$, $i\in[n]$, uniformly in $n$ as $t\to\infty$,
 then $\bar{\mathbf{u}}(t)$ is asymptotically stable.
\item[\rm(ii)]
If $\bar{\mathbf{u}}(t)$ is stable, then for any $\epsilon,T>0$ there exists $\delta>0$
 such that for $n>0$ sufficiently large,
 if $\mathbf{u}_n(t)$ is a solution to the CKM \eqref{eqn:dsys} satisfying
\begin{equation*}
\|\mathbf{u}_n(0)-\bar{\mathbf{u}}_n(0)\|
 <\delta,
\end{equation*}
then
\begin{equation*}
\max_{t\in[0,T]}\|\mathbf{u}_n(t)-\bar{\mathbf{u}}_n(t)\|
 <\epsilon\quad\mbox{a.s.}
\end{equation*}
Moreover, if $\bar{\mathbf{u}}(t)$ is asymptotically stable, then
\begin{equation*}
\lim_{t\to\infty}\lim_{n\to\infty}
 \|\mathbf{u}_n(t)-\bar{\mathbf{u}}_n(t)\|=0\quad\mbox{a.s.}
\end{equation*}
\end{enumerate}
\end{thm}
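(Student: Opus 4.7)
The proof adapts that of Theorem~2.5 in \cite{IY23}. In both directions the central device is the triangle inequality
\[
\|\mathbf{u}(t)-\bar{\mathbf{u}}(t)\|\le \|\mathbf{u}(t)-\mathbf{u}_n(t)\|+\|\mathbf{u}_n(t)-\bar{\mathbf{u}}_n(t)\|+\|\bar{\mathbf{u}}_n(t)-\bar{\mathbf{u}}(t)\|
\]
(or its mirror with $\mathbf{u}_n(t)$ on the left), in which Theorem~\ref{thm:2b} and the hypothesis \eqref{eqn:thm2c} control the two outer terms uniformly on compact $t$-intervals, leaving the middle term to be handled by whichever stability is assumed. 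A useful ancillary inequality is that on piecewise constant functions
\[
\|\mathbf{u}_n-\bar{\mathbf{u}}_n\|^2=\frac{1}{n}\sum_{k=1}^{n}(u_k^n-\bar{u}_k^n)^2\le \max_{k\in[n]}(u_k^n-\bar{u}_k^n)^2,
\]
so that pointwise CKM closeness converts directly to $L^2$ closeness; this is what powers part~(i).

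For part~(ii), I would proceed as follows. Given $\epsilon,T>0$, apply CL stability of $\bar{\mathbf{u}}$ at level $\epsilon/3$ to obtain a radius $\delta'>0$ and set $\delta<\delta'/2$. If $\|\mathbf{u}_n(0)-\bar{\mathbf{u}}_n(0)\|<\delta$, then for $n$ large \eqref{eqn:thm2c} at $t=0$ gives $\|\mathbf{u}_n(0)-\bar{\mathbf{u}}(0)\|<\delta'$, so the CL solution $\mathbf{v}(t)$ with $\mathbf{v}(0)=\mathbf{u}_n(0)$, whose existence and uniqueness is supplied by Theorem~\ref{thm:2a}, satisfies $\|\mathbf{v}(t)-\bar{\mathbf{u}}(t)\|<\epsilon/3$ for all $t\ge 0$. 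Theorem~\ref{thm:2b}, applied to the pairs $(\mathbf{u}_n,\mathbf{v})$ and $(\bar{\mathbf{u}}_n,\bar{\mathbf{u}})$, makes $\|\mathbf{u}_n(t)-\mathbf{v}(t)\|$ and $\|\bar{\mathbf{u}}_n(t)-\bar{\mathbf{u}}(t)\|$ each less than $\epsilon/3$ uniformly on $[0,T]$ for $n$ large, upgrading the pointwise-in-$t$ convergence of \eqref{eqn:thm2c} to a uniform one on the relevant interval. The triangle inequality then closes the statement. The asymptotic version is handled by first choosing $t_0$ large so that CL asymptotic stability of $\bar{\mathbf{u}}$ supplies $\|\mathbf{v}(t)-\bar{\mathbf{u}}(t)\|<\epsilon/3$ for $t\ge t_0$, and then applying Theorem~\ref{thm:2b} on $[0,t_0]$.

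Part~(i) is the more delicate direction. Given $\epsilon>0$, apply the hypothesized pointwise discrete stability at tolerance $\epsilon/2$ to obtain $\delta_1>0$. For a CL perturbation with $\|\mathbf{u}(0)-\bar{\mathbf{u}}(0)\|<\delta_{\mathrm{CL}}$ and $\delta_{\mathrm{CL}}\ll\delta_1$, I would construct a CKM initial condition by clipping piecewise constant averages at level $\delta_1$,
\[
u_k^n(0):=\bar{u}_k^n(0)+\psi\!\left(n\!\int_{I_k^n}\bigl(u(0,x)-\bar{u}_k^n(0)\bigr)\,\d x\right),\quad \psi(s):=\mathrm{sign}(s)\min(|s|,\delta_1),
\]
so that $|u_k^n(0)-\bar{u}_k^n(0)|\le\delta_1$ is enforced by construction. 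The uniform boundedness of the phases (which are $\Sset^1$-valued) combined with a Chebyshev-type estimate controls the $L^2$-error introduced by clipping by a factor of order $\delta_{\mathrm{CL}}/\delta_1$, which can be made arbitrarily small by choosing $\delta_{\mathrm{CL}}$ small. The discrete hypothesis then yields $|u_k^n(t)-\bar{u}_k^n(t)|<\epsilon/2$ for all $t$, and hence $\|\mathbf{u}_n(t)-\bar{\mathbf{u}}_n(t)\|\le\epsilon/2$ via the ancillary inequality. Passing $n\to\infty$ in the triangle inequality using Theorem~\ref{thm:2b} and \eqref{eqn:thm2c} delivers $\|\mathbf{u}(t)-\bar{\mathbf{u}}(t)\|\le\epsilon$, and asymptotic stability is obtained analogously. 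The main obstacle is precisely this clipping construction: $\delta_{\mathrm{CL}}$ and $\delta_1$ must be calibrated so that the pointwise CKM bound coexists with the $L^2$ convergence required to invoke Theorem~\ref{thm:2b}, and this calibration is the ``slight modification'' of \cite{IY23} referred to in the statement.
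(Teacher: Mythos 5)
The paper itself gives no proof of this theorem: it is imported from Theorem~2.5 of \cite{IY23} and Theorem~2.3/2.4 of \cite{Y24a} with the phrase ``slightly modifying the proof,'' so there is nothing in-text to compare line by line. Your part~(ii) is the standard argument those proofs use and is essentially sound: CL stability controls the middle term for all $t$, and Theorem~\ref{thm:2b} plus \eqref{eqn:thm2c} control the outer terms on $[0,T]$. The one informality is that you apply Theorem~\ref{thm:2b} to the pair $(\mathbf{u}_n,\mathbf{v})$ where $\mathbf{v}=\mathbf{v}_n$ has the $n$-dependent initial datum $\mathbf{u}_n(0)$; the theorem as stated fixes one CL solution and varies $n$, so you should either invoke the underlying Gronwall estimate (which is uniform over initial data on $[0,T]$) or route the comparison through a fixed CL solution. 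For the double limit, note also that the inner limit is taken at each \emph{fixed} $t$, so Theorem~\ref{thm:2b} is applied on $[0,t]$, not on $[0,t_0]$.

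Part~(i) has a genuine gap at exactly the step you flag as ``the main obstacle.'' Your clipping gives $|u_k^n(0)-\bar{u}_k^n(0)|\le\delta_1$ for every $k$, and the Chebyshev estimate bounds the $L^2$ discrepancy introduced by clipping by $O(\delta_{\mathrm{CL}}/\delta_1)$. But that bound is a fixed positive number, not $o(1)$ as $n\to\infty$: the clipped data $\mathbf{u}_n(0)$ do \emph{not} converge to $\mathbf{u}(0)$ in $L^2(I)$, so Theorem~\ref{thm:2b} cannot be invoked for the pair $(\mathbf{u}_n,\mathbf{u})$, and the first term $\|\mathbf{u}(t)-\mathbf{u}_n(t)\|$ in your triangle inequality is not controlled. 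Trying to absorb the residual $O(\delta_{\mathrm{CL}}/\delta_1)$ by a Gronwall comparison (either between two CKM solutions or between two CL solutions) produces a factor $e^{Lt}$, which is useless because stability requires a bound for \emph{all} $t\ge0$; and without already knowing stability of $\bar{\mathbf{u}}(t)$ you cannot propagate an $O(\delta)$ initial discrepancy uniformly in time. The underlying difficulty is structural: the hypothesis of (i) is a sup-norm (nodewise) stability statement, while an $L^2$-small CL perturbation can be large on a small set of nodes after averaging, and no choice of $\delta_{\mathrm{CL}}$ versus $\delta_1$ reconciles ``pointwise small for every $k$'' with ``$L^2$-convergent as $n\to\infty$'' for such perturbations. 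This is precisely the delicacy behind the correction noted in Remark~\ref{rmk:2a}(ii), and your proposal identifies it but does not close it; as written, part~(i) is not proved.
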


\begin{rmk}\
\label{rmk:2a}
\begin{enumerate}
\setlength{\leftskip}{-1.8em}
\item[\rm(i)]
For the CL \eqref{eqn:csys},
 two solutions are identified
 if they differ only on a set of Lebesgue measure zero.
Hence, in Theorem~{\rm\ref{thm:2c}(ii)},
 we cannot simply say that $\bar{\mathbf{u}}_n(t)$ is stable or asymptotically stable
 in the CKM \eqref{eqn:dsys}.
Indeed, $\bar{\mathbf{u}}_n(t)$ may be unstable.
See also Remark~$2.8$ below.
\item[\rm(ii)]
The statements in Theorem~{\rm2.4(i)} of {\rm\cite{Y24a}} had a small error,
 which is corrected in Theorem~{\rm\ref{thm:2c}(i)} 
\end{enumerate}
\end{rmk}

We state the following corollary of Theorem~\ref{thm:2c},
 which does not require the existence of a solution $\bar{\mathbf{u}}_n(t)$ to the CKM \eqref{eqn:dsys}
 satisfying \eqref{eqn:thm2c} (see also Corollary~2.6 of \cite{Y24a}).
 
\begin{cor}
\label{cor:2a}
Suppose that the CL \eqref{eqn:csys} has a stable solution $\bar{\mathbf{u}}(t)$.
Then for any $\epsilon,T>0$ there exists $\delta>0$ such that for $n>0$ sufficiently large,
 if $\mathbf{u}_n(t)$ is a solution to the KM \eqref{eqn:dsys} satisfying
\[
\|\mathbf{u}_n(0)-\bar{\mathbf{u}}(0)\|<\delta,
\]
then
\[
\max_{t\in[0,T]}
 \|\mathbf{u}_n(t)-\bar{\mathbf{u}}(t)\| <\epsilon\quad\mbox{a.s.}
\]
Moreover, if $\bar{\mathbf{u}}(t)$ is asymptotically stable, then
\[
\lim_{t\to\infty}\lim_{n\to\infty}
 \|\mathbf{u}_n(t)-\bar{\mathbf{u}}(t)\|=0\quad\mbox{a.s.}
\]
\end{cor}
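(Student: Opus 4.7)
The plan is to reduce Corollary~\ref{cor:2a} to Theorem~\ref{thm:2c}(ii) by manufacturing a sequence of CKM solutions that tracks the given CL solution $\bar{\mathbf{u}}(t)$. First I would discretize the initial data: define $\bar{u}_i^n(0):=n\int_{I_i^n}\bar{u}(0,x)\,\d x$ and let $\bar{\mathbf{u}}_n(t)$ be the $L^2(I)$-valued function associated via \eqref{eqn:un} to the unique CKM solution starting from $(\bar{u}_1^n(0),\ldots,\bar{u}_n^n(0))$. A standard approximation estimate gives $\|\bar{\mathbf{u}}_n(0)-\bar{\mathbf{u}}(0)\|\to 0$, and then Theorem~\ref{thm:2b} supplies $\max_{t\in[0,\tau]}\|\bar{\mathbf{u}}_n(t)-\bar{\mathbf{u}}(t)\|\to 0$ a.s.\ for every $\tau>0$. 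In particular the hypothesis \eqref{eqn:thm2c} of Theorem~\ref{thm:2c} holds for every $t\in[0,\infty)$.

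Given $\epsilon,T>0$, I would apply Theorem~\ref{thm:2c}(ii) with $\epsilon/2$ in place of $\epsilon$ to obtain a threshold $\delta_1>0$ such that, for $n$ sufficiently large, any CKM solution $\mathbf{u}_n(t)$ with $\|\mathbf{u}_n(0)-\bar{\mathbf{u}}_n(0)\|<\delta_1$ satisfies $\|\mathbf{u}_n(t)-\bar{\mathbf{u}}_n(t)\|<\epsilon/2$ a.s.\ on $[0,T]$. By Theorem~\ref{thm:2b} applied at $\tau=T$, I then enlarge $n$ further so that simultaneously $\|\bar{\mathbf{u}}_n(0)-\bar{\mathbf{u}}(0)\|<\delta_1/2$ and $\max_{t\in[0,T]}\|\bar{\mathbf{u}}_n(t)-\bar{\mathbf{u}}(t)\|<\epsilon/2$. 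Setting $\delta:=\delta_1/2$ and invoking the triangle inequality twice (once at $t=0$ to pass from closeness to $\bar{\mathbf{u}}(0)$ to closeness to $\bar{\mathbf{u}}_n(0)$, and once on $[0,T]$ to combine the two $\epsilon/2$ bounds) yields the stability conclusion. The asymptotic-stability statement follows by the same triangle inequality together with the second half of Theorem~\ref{thm:2c}(ii), since the iterated limit $\lim_{t\to\infty}\lim_{n\to\infty}$ is taken in the same order on both terms and $\|\bar{\mathbf{u}}_n(t)-\bar{\mathbf{u}}(t)\|\to 0$ for each fixed $t$.

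The main obstacle I anticipate is not analytic but quantifier-theoretic: Theorem~\ref{thm:2c}(ii) measures the initial perturbation from the discrete reference $\bar{\mathbf{u}}_n(0)$, whereas the corollary measures it from the continuous reference $\bar{\mathbf{u}}(0)$; and the threshold $\delta$ must be chosen independently of $n$. The construction above sidesteps this by first fixing $\delta_1$ from Theorem~\ref{thm:2c}(ii) (whose existence is uniform in $n$ for $n$ large) and only afterwards choosing $n$ large enough that $\bar{\mathbf{u}}_n(0)$ lies within $\delta_1/2$ of $\bar{\mathbf{u}}(0)$, so that any $\mathbf{u}_n(0)$ within $\delta_1/2$ of $\bar{\mathbf{u}}(0)$ is automatically within $\delta_1$ of $\bar{\mathbf{u}}_n(0)$. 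Once this order of choices is in place, the argument is essentially a triangle inequality wrapped around the two previous theorems, and no new estimates beyond those reviewed in Section~2 are required.
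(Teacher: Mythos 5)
Your proposal is correct and follows exactly the route the paper intends: the paper states this result as a corollary of Theorem~\ref{thm:2c} obtained by removing the assumed existence of the discrete reference solutions, and the natural way to do this (as in Corollary~2.6 of \cite{Y24a}, to which the paper defers) is precisely your construction of $\bar{\mathbf{u}}_n(t)$ from discretized initial data, verification of hypothesis \eqref{eqn:thm2c} via Theorem~\ref{thm:2b}, and the two triangle inequalities with the quantifiers ordered so that $\delta$ is independent of $n$. No gaps.
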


Thus, if the hypotheses of Corollary~\ref{cor:2a} hold,
 then for $n>0$ sufficiently large,
 $\bar{\mathbf{u}}(t)$ behaves as if it is an $($asymptotically$)$ stable solution
 in the CKM \eqref{eqn:dsys}.
We have the following from Theorems~2.7 and 2.9 of \cite{Y24a}.

\begin{thm}
\label{thm:2d}
Suppose that the hypothesis of Theorem~$\ref{thm:2c}$ holds.
Then the following hold$:$
\begin{enumerate}
\setlength{\leftskip}{-1.5em}
\item[\rm(i)]
If $\bar{\mathbf{u}}_n(t)$ is unstable a.s. for $n>0$ sufficiently large
 and no stable solution to the CKM \eqref{eqn:dsys} converges to $\mathbf{u}(t)$ a.s.
 as $n\to\infty$, then $\bar{\mathbf{u}}(t)$ is unstable$;$
\item[\rm(ii)]
If $\bar{\mathbf{u}}(t)$ is unstable, then so is $\bar{\mathbf{u}}_n(t)$ for $n>0$ sufficiently large.
\end{enumerate}
\end{thm}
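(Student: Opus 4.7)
My plan is to combine the finite-time $L^2$-approximation of Theorem~\ref{thm:2b} with the comparison principle in Theorem~\ref{thm:2c}. I would establish part (ii) directly and part (i) by contrapositive.

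For part (ii), I would transfer the CL instability of $\bar{\mathbf{u}}(t)$ to the CKM by discretizing CL escape witnesses. Fix the escape radius $\epsilon_0>0$ from the definition of instability of $\bar{\mathbf{u}}(t)$. For each target scale $\delta>0$, the instability of $\bar{\mathbf{u}}(t)$ produces a CL solution $\mathbf{v}(t)$ with $\|\mathbf{v}(0)-\bar{\mathbf{u}}(0)\|<\delta/3$ and $\|\mathbf{v}(t^\ast)-\bar{\mathbf{u}}(t^\ast)\|\geq\epsilon_0$ at some finite $t^\ast$. Taking CKM initial data $\mathbf{u}_n(0)$ to be the step-function representative of $\mathbf{v}(0)$ in the sense of \eqref{eqn:un} makes $\|\mathbf{u}_n(0)-\mathbf{v}(0)\|\to 0$, and Theorem~\ref{thm:2b} then forces $\|\mathbf{u}_n(t)-\mathbf{v}(t)\|\to 0$ uniformly on $[0,t^\ast]$. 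Combining with the standing hypothesis $\|\bar{\mathbf{u}}_n(t)-\bar{\mathbf{u}}(t)\|\to 0$ and triangulating, I obtain, for $n$ sufficiently large, $\|\mathbf{u}_n(0)-\bar{\mathbf{u}}_n(0)\|<\delta$ and $\|\mathbf{u}_n(t^\ast)-\bar{\mathbf{u}}_n(t^\ast)\|\geq\epsilon_0/2$, a CKM escape witness at scale $\delta$ with uniform separation $\epsilon_0/2$. Running $\delta$ through a null sequence $\delta_k\downarrow 0$ with monotonic thresholds $N(\delta_k)$, a diagonal argument yields, for each $n$ large enough, escape witnesses at every previously chosen scale, establishing instability of $\bar{\mathbf{u}}_n$.

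For part (i), I would argue by contrapositive, assuming $\bar{\mathbf{u}}(t)$ is stable. Theorem~\ref{thm:2c}(ii) then supplies, for each $\epsilon,T>0$, a $\delta>0$ such that for $n$ large, every CKM perturbation with $\|\mathbf{u}_n(0)-\bar{\mathbf{u}}_n(0)\|<\delta$ remains $\epsilon$-close to $\bar{\mathbf{u}}_n(t)$ on $[0,T]$. Since the $\bar{\mathbf{u}}_n$ are assumed unstable, any witnessing perturbation must have escape time $t_n\to\infty$ along a subsequence. Exploiting compactness of the phase space $(\Sset^1)^n$ together with boundedness of the right-hand side of \eqref{eqn:dsys}, I would extract a long-time limit to obtain a nearby CKM trajectory $\tilde{\mathbf{u}}_n(t)$ attracting these perturbations, and then, along a subsequence of $n$, pass to a CL limit $\tilde{\mathbf{u}}(t)$ via Theorem~\ref{thm:2b}. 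The stability of $\bar{\mathbf{u}}$ combined with Theorem~\ref{thm:2c}(ii) then forces $\tilde{\mathbf{u}}(t)=\bar{\mathbf{u}}(t)$ in $L^2$, so the extracted $\tilde{\mathbf{u}}_n$ constitutes a stable CKM branch converging to $\bar{\mathbf{u}}$, contradicting the second hypothesis of (i).

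The main obstacle I anticipate is the quantifier-order subtlety in (ii): instability of $\bar{\mathbf{u}}_n$ at a fixed $n$ demands escape witnesses at every $\delta>0$, whereas my construction produces a witness at scale $\delta$ only once $n\geq N(\delta)$. The diagonalization handles this provided $N(\delta)$ can be arranged monotonically, but selecting the CL trajectories $\mathbf{v}(t)$ coherently across the scales $\delta_k$ is the delicate step. A secondary difficulty in (i) is the rigorous extraction of a well-defined attracting nearby CKM solution from the escaping perturbations, which I would address through $\omega$-limit-set arguments on the compact torus followed by a transfer to the CL via the finite-time $L^2$-bound of Theorem~\ref{thm:2b}.
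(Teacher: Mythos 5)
The paper does not actually prove Theorem~\ref{thm:2d} here: it imports the statement from Theorems~2.7 and 2.9 of [Y24a], so your sketch must be judged on its own terms, and it has genuine gaps in both parts. For part (ii), the quantifier problem you flag at the end is not a technicality that diagonalization repairs; it is fatal to the argument as written. Instability of $\bar{\mathbf{u}}_n(t)$ for a \emph{fixed} $n$ requires an escape witness at \emph{every} scale $\delta>0$, whereas your construction produces a witness at scale $\delta$ only once $n\ge N(\delta)$. Hence for a fixed $n$ you obtain witnesses only at the finitely many scales $\delta_k$ with $N(\delta_k)\le n$, and these are bounded away from zero. Nothing in the argument excludes the scenario in which every $\bar{\mathbf{u}}_n$ is stable with a stability radius $\delta_n$ shrinking to zero much faster than $\min\{\delta_k: N(\delta_k)\le n\}$; your witnesses are fully compatible with that. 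The finite-time transfer via Theorem~\ref{thm:2b} and the triangulation against the standing hypothesis \eqref{eqn:thm2c} are correct, but they only show that the $\bar{\mathbf{u}}_n$ cannot be stable \emph{uniformly in} $n$ in the sense of Theorem~\ref{thm:2c}(i), which is strictly weaker than the assertion of (ii).

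For part (i), taking the contrapositive is reasonable, but the decisive step --- producing a \emph{stable} CKM solution that converges to $\bar{\mathbf{u}}(t)$ in $L^2(I)$ as $n\to\infty$ --- is not carried out. An $\omega$-limit set of a single escaping orbit on $(\Sset^1)^n$ need not be the orbit of a stable solution, need not even be a single trajectory, and in any case stability is a property of behaviour under perturbation that cannot be read off from one orbit's asymptotics. Theorem~\ref{thm:2b} also cannot be used to ``pass to a CL limit'' of such objects: it concerns the IVP with initial data converging in $L^2(I)$, not the extraction of limits from arbitrary families of solutions indexed by $n$ and living in different phase spaces; and the identification $\tilde{\mathbf{u}}=\bar{\mathbf{u}}$ via Theorem~\ref{thm:2c}(ii) presupposes exactly the convergence $\|\tilde{\mathbf{u}}_n-\tilde{\mathbf{u}}\|\to0$ that you have not established. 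So neither the existence of the stable branch nor its convergence to $\bar{\mathbf{u}}$ is obtained, and the contradiction with the second hypothesis of (i) is not reached. Both parts would need substantially more than the soft finite-time approximation results of Section~2, which is presumably why the paper defers to the detailed arguments of [Y24a].
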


\begin{thm}
\label{thm:2e}
If $\bar{\mathbf{u}}(t)$ is unstable,
 then for any $\epsilon,\delta>0$ there exist $\tau,N>0$ such that for $n>N$
\[
\|\mathbf{u}_n(\tau)-\bar{\mathbf{u}}(\tau)\|>\epsilon,\quad\mbox{a.s.}
\]
where $\mathbf{u}_n(t)$ is a solution to the KM \eqref{eqn:dsys} satisfying
\[
\|\mathbf{u}_n(0)-\bar{\mathbf{u}}(0)\|<\delta.
\]
\end{thm}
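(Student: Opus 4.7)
The plan is to transfer the instability of $\bar{\mathbf{u}}(t)$ in the CL~\eqref{eqn:csys} to the CKM~\eqref{eqn:dsys} by discretizing a witnessing CL orbit and using the IVP convergence of Theorem~\ref{thm:2b} to propagate the escape on a fixed finite interval. The existence and uniqueness of the CL orbit in question is guaranteed by Theorem~\ref{thm:2a}, and the random-graph setting requires nothing beyond the "a.s." conclusion of Theorem~\ref{thm:2b}.

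First, I would invoke the definition of instability of $\bar{\mathbf{u}}(t)$ to produce a CL witness: there exists some $\epsilon_0 > 0$ such that for every $\delta' > 0$ one can find $g \in L^2(I)$ with $\|g - \bar{\mathbf{u}}(0)\| < \delta'$ whose unique CL solution $\mathbf{v}(t)$ satisfies $\|\mathbf{v}(\tau_0) - \bar{\mathbf{u}}(\tau_0)\| > 2\epsilon$ at some time $\tau_0 > 0$. Given $\delta,\epsilon > 0$, I would set $\delta' = \delta/2$ and define the discrete initial data by the piecewise-constant sampling
\[
\mathbf{u}_n(0) = \sum_{i=1}^n \left( n \int_{I_i^n} g(x) \, \d x \right) \mathbf{1}_{I_i^n},
\]
so that $\|\mathbf{u}_n(0) - g\| \to 0$ by the usual $L^2$-density of step functions, hence $\|\mathbf{u}_n(0) - \bar{\mathbf{u}}(0)\| < \delta$ for all sufficiently large $n$. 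Applying Theorem~\ref{thm:2b} on the compact interval $[0,\tau_0]$ yields $\|\mathbf{u}_n(\tau_0) - \mathbf{v}(\tau_0)\| \to 0$ a.s., and the reverse triangle inequality
\[
\|\mathbf{u}_n(\tau_0) - \bar{\mathbf{u}}(\tau_0)\| \ge \|\mathbf{v}(\tau_0) - \bar{\mathbf{u}}(\tau_0)\| - \|\mathbf{u}_n(\tau_0) - \mathbf{v}(\tau_0)\| > 2\epsilon - \epsilon = \epsilon
\]
for all $n$ exceeding some $N$ then closes the argument upon setting $\tau = \tau_0$.

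The main obstacle I anticipate is the quantifier mismatch between the fixed escape magnitude $\epsilon_0$ intrinsic to the instability definition and the universally quantified $\epsilon$ in the statement: the argument above directly handles $\epsilon < \epsilon_0/2$, which is the content actually needed to conclude that CKM orbits cannot be trapped near an unstable CL reference. For larger $\epsilon$, one can iterate by restarting the escape argument at a later time using the forward CL flow, since Theorem~\ref{thm:2a} guarantees global-in-time well-posedness. A secondary subtlety is that the escape time $\tau_0$ and the initial datum $g$ must be fixed \emph{before} invoking Theorem~\ref{thm:2b}; this causes no issue because Theorem~\ref{thm:2b} delivers convergence uniform on every compact time interval, so the large-$n$ threshold $N$ can be chosen last, after $\tau_0$ is pinned down by the CL instability.
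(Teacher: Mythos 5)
The paper does not actually prove this statement: it is imported verbatim from Theorem~2.9 of \cite{Y24a}, so there is no in-paper argument to compare against. Your proof is the natural one and is essentially correct for the content the theorem is actually used for: take an instability witness $g$ with $\|g-\bar{\mathbf{u}}(0)\|<\delta/2$ whose CL orbit $\mathbf{v}(t)$ leaves an $\epsilon_0$-neighborhood at some time $\tau_0$, discretize $g$ by averaging over the $I_i^n$ so that $\|\mathbf{u}_n(0)-g\|\to 0$, invoke Theorem~\ref{thm:2b} on $[0,\tau_0]$ (which indeed applies to the CL orbit through $g$, not only to $\bar{\mathbf{u}}$), and finish with the reverse triangle inequality. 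Your remark that $\tau_0$ and $g$ must be fixed before the large-$n$ threshold is the right bookkeeping.

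The one step I would not accept is the proposed iteration for $\epsilon\ge\epsilon_0/2$. Instability only constrains orbits while they are still near $\bar{\mathbf{u}}$; once $\mathbf{v}$ has left the $\epsilon_0$-neighborhood you cannot ``restart'' the escape argument from $\mathbf{v}(\tau_0)$, and there is no guarantee that any orbit ever gets farther than $O(\epsilon_0)$ from $\bar{\mathbf{u}}$ (think of an unstable equilibrium enclosed by a small invariant set; on $\Tset^n$ the $L^2$ distance is in any case bounded). So the literal ``for any $\epsilon>0$'' cannot be recovered from instability alone; this is a looseness inherited from the statement itself rather than a defect of your argument, and the correct reading is that the conclusion holds for every $\epsilon$ below the instability margin $\epsilon_0$, which is exactly what your main argument delivers. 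A second, minor point: Theorem~\ref{thm:2b} gives almost sure convergence, so the threshold $N$ it produces is a priori random; the phrasing ``there exists $N$ such that for $n>N$ \ldots\ a.s.'' should be read as ``a.s., for $n$ sufficiently large,'' consistently with how Theorems~\ref{thm:2b}--\ref{thm:2d} are phrased throughout the paper.
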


\begin{rmk}
\label{rmk:2c}
The hypothesis of Theorem~{\rm\ref{thm:2d}} alone does not imply that $\mathbf{u}(t)$ is unstable,
 even when $\mathbf{u}_n(t)$ is unstable for all sufficiently large $n$.
Moreover, it may happen that $\mathbf{u}(t)$ is asymptotically stable
 although $\mathbf{u}_n(t)$ is unstable for all sufficiently large $n$.
This behavior was proven to occur for the classical KM \eqref{eqn:dsys0}
 on complete simple graphs and its CL \eqref{eqn:csys0} previously in {\rm\cite{Y24a}}.
We will see below that it also occurs
 for the CKM \eqref{eqn:dsys} on complete graphs and its CL \eqref{eqn:csys}.
\end{rmk}

\section{Controlled Kuramoto Model on Complete Graphs}
In this section, we present our main results on synchronized solutions
 of the CKM \eqref{eqn:dsys} with the uniformly spaced natural frequencies \eqref{eqn:omegai}
 on complete graphs, i.e., $w_{ij}=p\in(0,1]$, $i,j\in[n]$, and $\alpha_n=1$.
We transform the CKM \eqref{eqn:dsys} into a tractable autonomous system,
 detect all its equilibria, the number of which is exactly $2^n$,
 and determine their bifurcation structure and stability.
A key result of this section is that the equilibrium
 corresponding to the synchronized solution \eqref{eqn:dsol} undergoes a saddle-node bifurcation,
 at which a pair of stable and unstable equilibria are born.
Moreover, we prove that, when $b_1>0$ and $K>0$ is sufficiently small,
 all other equilibria are unstable, and no equilibrium exists for $b_1=b_0=0$.
These results provide a complete picture
 of synchronization, bifurcations and stability for the CKM \eqref{eqn:dsys} on complete graphs.

Throughout this section, we take any integer $n\ge 3$ as the node number.
Let
\begin{equation}
v_i=u_i^n-V(t),\quad i\in[n].
\label{eqn:vi}
\end{equation}
Since we take $b_0=V_1$, Eq.~\eqref{eqn:dsys} becomes 
\begin{equation}
\dot{v}_i= 
 (2i-n-1)\nu+\frac{pK}{n}\sum_{j=1}^n\sin\left(v_j-v_i\right)-b_1\sin v_i,\quad i\in[n],
\label{eqn:dsys1}
\end{equation}
where $\nu=a/2n$.
If Eq.~\eqref{eqn:dsys1} has an equilibrium at $v_i=\zeta_i$, $i\in[n]$, then
\begin{equation}
u_i^n=\zeta_i+V(t),\quad
i\in[n].
\label{eqn:dsyssol}
\end{equation}
is a solution to the CKM \eqref{eqn:dsys}.
Let $v=(v_1,\ldots,v_n)$ and $\zeta=(\zeta_1,\ldots,\zeta_n)$.
If the equilibrium $v=\zeta$ is (asymptotically) stable,
 then so is the solution \eqref{eqn:dsyssol},
 and if a bifurcation of $v=\zeta$ occurs,
 then so does that of \eqref{eqn:dsyssol}.
Hence, we analyze the system \eqref{eqn:dsys1}
 instead of the CKM \eqref{eqn:dsys} below.
Our approaches are modifications of ones used in \cite{Y24a},
 in which the classical KM given by \eqref{eqn:dsys0} with $p=1$ 
 was analyzed when $n$ is any odd number with $n\geq 3$,
 although the modifications are not straightforward.

\subsection{Equilibria}
Let $\sigma=\{\sigma_i\}_{i=1}^n$ be a sequence of length $n$
 with $\sigma_i\in\{-1,1\}$, $i\in[n]$, 
 and let
\[
\Sigma_n = 
\left\{\sigma=\{\sigma_i\}_{i=1}^n\mid\sigma_i\in\{-1,1\},i\in[n]\right\}.
\]
For each $\sigma\in\Sigma_n$, we define $C_\D^{\sigma}$ such that 
\begin{equation}
C_\D^{\sigma} = 
\frac{1}{n}\sum_{i=1}^{n}\sigma_i\sqrt{1-\left(\frac{(2i-n-1)\nu}{pKC_\D^{\sigma}+b_1}\right)^2},
\label{eqn:CDj}
\end{equation}
and write
\begin{equation}
v_i^\sigma = 
\begin{cases}
\phi_i & \mbox{if $\sigma_i=1$;}\\
\pi-\phi_i & \mbox{if $\sigma_i=-1$ and $\phi_i\ge0$;}\\
-\phi_i-\pi & \mbox{if $\sigma_i=-1$ and $\phi_i<0$,}
\end{cases}
\label{eqn:vis}
\end{equation}
where
\begin{equation}
\phi_i = 
\arcsin\left(\frac{(2i-n-1)\nu}{pKC_\D^{\sigma}+b_1}\right).
\label{eqn:phi}
\end{equation}
As in Theorem~4.1 of \cite{Y24a}, we can prove the following
 on the existence of equilibria in \eqref{eqn:dsys1}.

\begin{thm}\
\label{thm:3a}
\begin{enumerate}
\setlength{\leftskip}{-1.5em}
\item[\rm(i)]
For each $\sigma\in\Sigma_n$,
 $v=v^\sigma\in\Tset^{n}:=\prod_{i=1}^{n}\Sset^1$
 gives an equilibrium in \eqref{eqn:dsys1}, 
 when $C_\D^{\sigma}$ satisfies \eqref{eqn:CDj}.
Moreover, no other equilibrium exists in \eqref{eqn:dsys1} for $b_1\neq 0$.
\item[\rm(ii)]
Fix $b_1,a,pK>0$ and let $i\le n/2$.
If the equilibrium $v=v^\sigma$ with $\sigma_i=-1$ and $\sigma_{n-i+1}=1$ exists, 
 then so does $v=v^{\bar{\sigma}}$ with $\bar{\sigma}_i=1$, $\bar{\sigma}_{n-i+1}=-1$
 and $\bar{\sigma}_j=\sigma_j$, $j\in[n]\setminus\{i,n-i+1\}$, and vice versa.
\end{enumerate}
\end{thm}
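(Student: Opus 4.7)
For part (i), my plan is the standard order-parameter reduction. I would first expand $\sin(v_j-v_i)=\sin v_j\cos v_i-\cos v_j\sin v_i$ in \eqref{eqn:dsys1} and introduce the Kuramoto order parameters $S=n^{-1}\sum_{j=1}^n\sin v_j$ and $C=n^{-1}\sum_{j=1}^n\cos v_j$. The equilibrium equations then read
\begin{equation*}
(2i-n-1)\nu+pKS\cos v_i=(pKC+b_1)\sin v_i,\quad i\in[n].
\end{equation*}
Summing over $i$ annihilates the first term, since $\sum_{i=1}^n(2i-n-1)=0$, and also the double-sum term, since $\sum_{i,j}\sin(v_j-v_i)=0$ by antisymmetry, leaving $-b_1 nS=0$ and hence $S=0$ whenever $b_1\neq 0$. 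Substituting $S=0$ back gives $\sin v_i=(2i-n-1)\nu/(pKC+b_1)$, which in particular forces $pKC+b_1>0$ and $|(2i-n-1)\nu|\leq pKC+b_1$. For each $i$, the equation $\sin v_i=\sin\phi_i$ on $\Sset^1$ has exactly two roots, distinguished by the sign of $\cos v_i$; indexing these roots by $\sigma_i\in\{-1,1\}$ produces precisely the three-case formula \eqref{eqn:vis}. A short case check gives $\cos v_i^\sigma=\sigma_i\sqrt{1-\sin^2\phi_i}$, so the self-consistency $C=n^{-1}\sum_i\cos v_i^\sigma$ reduces to \eqref{eqn:CDj}. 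Conversely, direct substitution of any $v^\sigma$ with $C_\D^\sigma$ satisfying \eqref{eqn:CDj} back into the equilibrium equation, using $S=0$ and $C=C_\D^\sigma$, verifies that it is indeed an equilibrium.

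For part (ii), I would exploit the antisymmetry of the uniformly spaced natural frequencies under $i\mapsto n-i+1$, which yields $(2i-n-1)^2=(2(n-i+1)-n-1)^2$. Hence in the sum \eqref{eqn:CDj} the entries $\sigma_i$ and $\sigma_{n-i+1}$ appear with identical coefficients, and exchanging their values leaves the right-hand side invariant. In particular, if $\sigma_i=-1$, $\sigma_{n-i+1}=1$ and $C_\D^\sigma$ solves \eqref{eqn:CDj}, then the same scalar also solves the analogous equation indexed by $\bar\sigma$, and part (i) then produces the equilibrium $v=v^{\bar\sigma}$. The converse direction is identical.

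The main obstacle I anticipate is the careful bookkeeping in the reduction step of part (i): verifying the identities $\sin v_i^\sigma=\sin\phi_i$ and $\cos v_i^\sigma=\sigma_i\sqrt{1-\sin^2\phi_i}$ uniformly across the three subcases of \eqref{eqn:vis} (in particular, separating $\phi_i\geq 0$ from $\phi_i<0$ when $\sigma_i=-1$), and checking that distinct $\sigma\in\Sigma_n$ yield distinct points of $\Tset^n$. Once the order-parameter equation $S=0$ has been extracted from summing the equilibrium conditions over $i$, the remaining steps are routine trigonometric algebra, and part (ii) follows almost immediately from the symmetry of the coefficients in \eqref{eqn:CDj}.
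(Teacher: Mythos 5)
Your argument is essentially the paper's own proof: summing the equilibrium equations annihilates the frequency and coupling terms and yields $\sum_{i}\sin v_i=0$ for $b_1\neq 0$ (Lemma~\ref{lem:3a} in the paper), after which the equilibrium condition collapses to $(2i-n-1)\nu=(pKC+b_1)\sin v_i$, the two roots per node are indexed by $\sigma_i=\mathrm{sign}(\cos v_i)$, and the self-consistency condition becomes \eqref{eqn:CDj}; part~(ii) is the same observation that the coefficients in \eqref{eqn:CDj} are invariant under $i\mapsto n-i+1$. One correction: the reduction does \emph{not} force $pKC+b_1>0$, only $pKC+b_1\neq 0$ together with $|(2i-n-1)\nu|\le|pKC+b_1|$; the paper deliberately keeps track of both signs of $pKC_\D^{\sigma}+b_1$ (see the $\pm$ in \eqref{eqn:con} and Corollary~\ref{cor:3a}), and the completeness claim of part~(i) covers $b_1<0$ as well, where the denominator is typically negative. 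Since your classification of the roots by the sign of $\cos v_i$ never actually uses the positivity, the proof stands once that sentence is deleted, but as written it would silently discard the equilibria with $pKC+b_1<0$ from the ``no other equilibrium'' assertion.
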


\begin{proof}
Let $\zeta=(\zeta_1,\cdots,\zeta_n)$ denote an equilibrium of \eqref{eqn:dsys1}. 
Recall that $b_0=V_1$.
We begin with the following lemma.

\begin{lem}
\label{lem:3a}
We have
\begin{equation*}
\sum_{i=1}^{n}\sin\zeta_i=0.
\end{equation*}
\end{lem}

\begin{proof}
When $v_i=\zeta_i$, $i\in[n]$,
 the sum of the right-hand sides of \eqref{eqn:dsys1} is zero.
This yields the desired result.
\end{proof}

Let 
\begin{equation*}
C_{\zeta}=\frac{1}{n}\sum_{i=1}^{n}\cos\zeta_i.
\end{equation*}
Using Lemma~$\ref{lem:3a}$ and \eqref{eqn:dsys1}, we have 
\begin{equation}
(2i-n-1)\nu-\left(pKC_{\zeta}+b_1\right)\sin\zeta_i=0.
\end{equation}
Hence, if $\phi_i\geq0$ (resp. $\phi_i<0$),
 then $\zeta_i=\phi_i$ or $\pi-\phi_i$ (resp. $\zeta_i=\phi_i$ or $-\pi-\phi_i$), $i\in[n]$,
 where $\phi_i$ is given by \eqref{eqn:phi} with $C_\D^{\sigma}=C_{\zeta}$.
This yields part~(i).
By \eqref{eqn:CDj}, we have $C_\D^{\sigma}=C_\D^{\bar{\sigma}}$,
 so that $\phi_i^\sigma=\phi_i^{\bar{\sigma}}$, $i\in[n]$.
This yields part~(ii).
\end{proof}

For $\sigma\in\Sigma_n$, we define
\[
\chi^\sigma(\xi) = 
\frac{1}{n}\sum_{i=1}^{n}\sigma_i\sqrt{1-\left(\frac{2i-n-1}{n-1}\xi\right)^2},
\]
where $\xi\in[0,1]$.
Letting $\xi=(n-1)\nu/|pKC_D^{\sigma}+b_1|$, 
 we rewrite \eqref{eqn:CDj} as
\begin{equation}
\frac{pK}{b_1} = 
\bar{\chi}^\sigma(\xi) := 
\frac{\xi}{\pm\beta-\xi\chi^\sigma(\xi)},\quad
\beta=\frac{(n-1)\nu}{pK}>0,
\label{eqn:con}
\end{equation}
where the upper and lower signs, respectively,
 correspond to the cases $pK\chi^{\sigma}(\xi)+b_1>0$ and $pK\chi^{\sigma}(\xi)+b_1<0$.
The following corollary is an immediate consequence of Theorem~\ref{thm:3a}(i).

\begin{cor}
\label{cor:3a}
Fix the value of $pK>0$.
If $\xi\in(0,1]$ satisfies \eqref{eqn:con} 
 for $\sigma\in\Sigma_n$ and $b_1>0$,
 then $v^\sigma$ given by \eqref{eqn:vis} with 
\begin{equation*}
\phi_i = \pm\arcsin\left(\frac{2i-n-1}{n-1}\xi\right),
\end{equation*}
instead of \eqref{eqn:phi}, is an equilibrium in \eqref{eqn:dsys1}, 
 where the upper or lower signs, respectively,
 correspond to the cases $pK\chi^{\sigma}(\xi)+b_1>0$ and $pK\chi^{\sigma}(\xi)+b_1<0$.
\end{cor}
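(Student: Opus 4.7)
My plan is to derive Corollary~\ref{cor:3a} as a direct reformulation of Theorem~\ref{thm:3a}(i) under the change of variable
\[
\xi = \frac{(n-1)\nu}{|pKC_\D^\sigma+b_1|},
\]
which normalizes the arcsin argument in \eqref{eqn:phi} to $[-1,1]$. Under this substitution,
\[
\frac{(2i-n-1)\nu}{pKC_\D^\sigma+b_1} = \pm\frac{2i-n-1}{n-1}\,\xi,
\]
with sign $+$ or $-$ according to the sign of $pKC_\D^\sigma+b_1$. Feeding this back into \eqref{eqn:phi} gives exactly the expression for $\phi_i$ stated in the corollary, so the components $v_i^\sigma$ defined through \eqref{eqn:vis} agree with the equilibrium of Theorem~\ref{thm:3a}(i).

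The remaining task is to show that condition \eqref{eqn:con} on $\xi$ is equivalent to the fixed-point relation \eqref{eqn:CDj} on $C_\D^\sigma$. Since the square-root factor in the summand of \eqref{eqn:CDj} depends only on the square of its argument, equation \eqref{eqn:CDj} becomes $C_\D^\sigma = \chi^\sigma(\xi)$. Substituting this into the defining identity $|pKC_\D^\sigma+b_1|=(n-1)\nu/\xi$, rearranging to $\xi(pK\chi^\sigma(\xi)+b_1)=\pm(n-1)\nu = \pm pK\beta$, and dividing through by $b_1$, I recover \eqref{eqn:con} exactly, with the sign $\pm\beta$ matching the sign of $pK\chi^\sigma(\xi)+b_1$. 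Conversely, any $\xi\in(0,1]$ solving \eqref{eqn:con} yields a consistent value $C_\D^\sigma=\chi^\sigma(\xi)$ satisfying \eqref{eqn:CDj}, so Theorem~\ref{thm:3a}(i) delivers the equilibrium.

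The main point requiring care, though not a genuine obstacle, is keeping the sign of $pKC_\D^\sigma+b_1$ consistent on both sides of the reformulation: the same branch choice must appear in \eqref{eqn:con} and in the $\pm\arcsin$ expression for $\phi_i$. One should also remark that the standing conditions $\nu>0$ and $pKC_\D^\sigma+b_1\neq 0$ (needed for \eqref{eqn:phi} to be defined) force $\xi>0$, while the bound $\xi\le 1$ corresponds exactly to the domain constraint \eqref{eqn:cond} under which the arcsin is real.
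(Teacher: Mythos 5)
Your proposal is correct and follows essentially the same route as the paper, which simply derives the corollary ``immediately'' from Theorem~\ref{thm:3a}(i) via the reparametrization $\xi=(n-1)\nu/|pKC_\D^{\sigma}+b_1|$ that converts \eqref{eqn:CDj} into \eqref{eqn:con} and \eqref{eqn:phi} into the stated $\pm\arcsin$ formula; you have merely written out the sign bookkeeping that the paper leaves implicit.
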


\begin{rmk}\
\label{rmk:3a}
\begin{enumerate}
\setlength{\leftskip}{-1.4em}
\item[\rm(i)]
From Theorem~$\ref{thm:3a}$
 we easily see that the number of equilibria in \eqref{eqn:dsys1},
 which correspond to synchronized solutions of the form \eqref{eqn:dsys}, is exactly $2^n$.
\item[\rm(ii)]
We easily see that
 if $\sigma\neq\hat{\sigma}$, $\xi\neq 1$ and $b_1\neq 0$,
 then $v^\sigma\neq v^{\hat{\sigma}}$.
\item[\rm(iii)]
From the result of {\rm\cite{IY23,Y24a}} we see that
 if $b_1=0$, then there exists a one-parameter family of equilibria in \eqref{eqn:dsys1}.
\end{enumerate}

\end{rmk}

\begin{figure}
\includegraphics[scale=0.58]{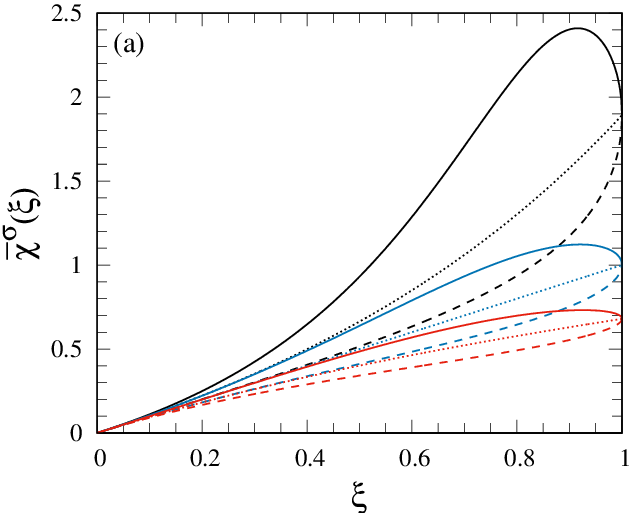}\
\includegraphics[scale=0.58]{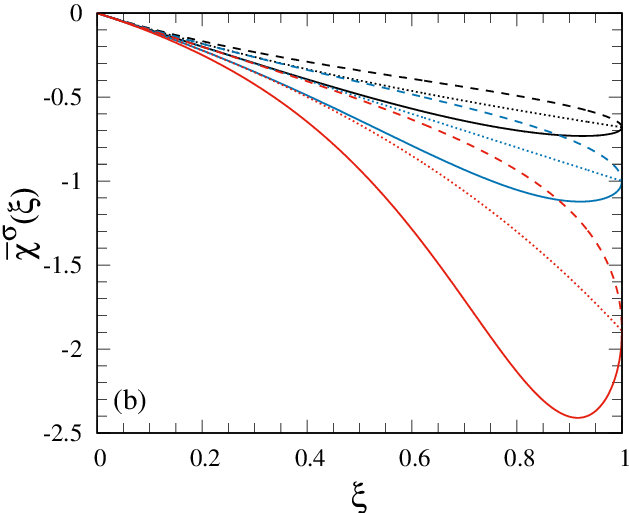}\\[1ex]
\includegraphics[scale=0.58]{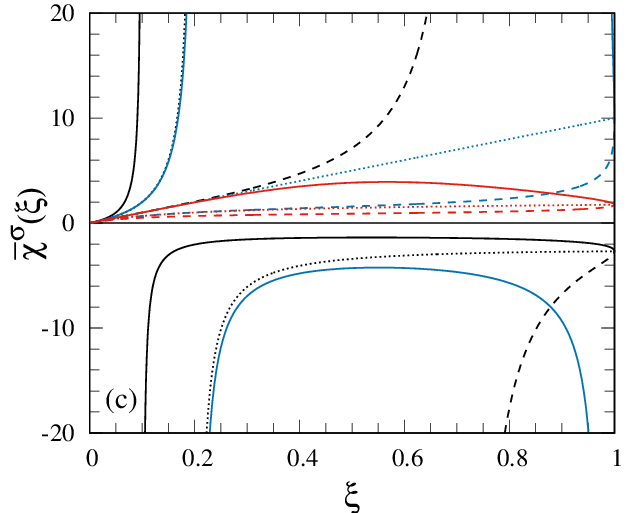}
\includegraphics[scale=0.58]{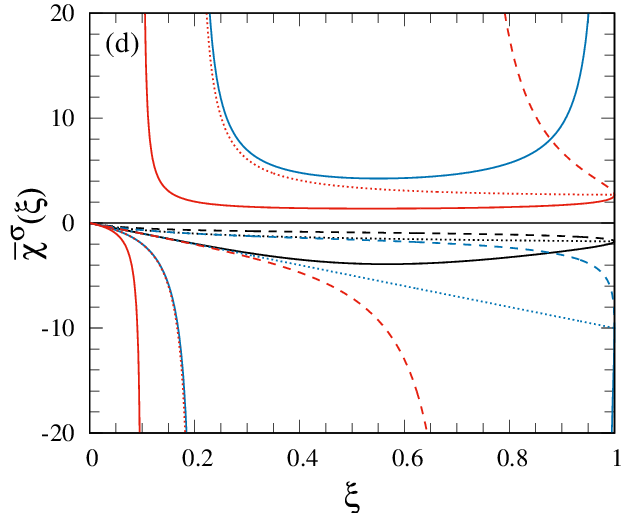}
\caption{Function $\bar{\chi}^\sigma(\xi)$ for $n=4$:
(a) and (b) $\beta=1$; (c) and (d) $0.1$.
In plates~(a) and (c) (resp. plates~(b) and (d)) the sign `$+$' (resp. `$-$') is taken
 in  \eqref{eqn:con}.
See the text for more details.
\label{fig:3a}}
\end{figure}

Figure~\ref{fig:3a} shows the graph of $\bar{\chi}^\sigma(\xi)$ for $n=4$.
Solid lines indicate the cases
 where $\bar{\chi}^\sigma(\xi)$ has a local maximum or minimum.
In the remaining cases, dotted and dashed lines are used
 for $\sigma_1=\sigma_n$ and $\sigma_1\neq\sigma_n$, respectively.
The black and red lines represent the graph
 for $(\sigma_2,\sigma_3)=(1,1)$ and  $(-1,-1)$, respectively,
 while the blue line for $(\sigma_2,\sigma_3)=(1,-1)$ or $(-1,1)$.
We see that $\bar{\chi}^\sigma(\xi)$ is bounded if $\beta>0$ is sufficiently large 
 and that it is unbounded if $\beta>0$ is sufficiently small.
We prove the following.

\begin{prop}
\label{prop:3a}
Suppose that
\begin{equation}
\beta=\frac{(n-1)\nu}{pK}
 >\max_{\xi\in[0,1]}\xi\chi^\sigma(\xi)\quad\mbox{with $\sigma_i=1$, $i\in[n]$}.
\label{eqn:prop3a}
\end{equation}
Then the system~\eqref{eqn:dsys1} has no equilibrium when $b_1=0$.
\end{prop}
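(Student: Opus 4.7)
The plan is to argue by contradiction: suppose an equilibrium of \eqref{eqn:dsys1} exists at $b_1=0$, invoke Theorem~\ref{thm:3a}(i) to pin it down to one of the $2^n$ explicit forms $v^\sigma$, and then reduce \eqref{eqn:CDj} to a single scalar equation in $\xi\in(0,1]$ that is dominated, termwise, by the value coming from the all-$+1$ choice of $\sigma$. Under the hypothesis \eqref{eqn:prop3a}, that scalar equation is shown to be unsolvable, delivering the contradiction.

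First I would fix such an equilibrium $v=v^\sigma$ with $C_\D^\sigma$ satisfying \eqref{eqn:CDj} at $b_1=0$. Well-definedness of the arcsin in \eqref{eqn:phi} forces $pK|C_\D^\sigma|\ge (n-1)\nu$, and since $n\ge 3$ gives $(n-1)\nu>0$ this in particular rules out $C_\D^\sigma=0$. I would then set
\[
\xi:=\frac{(n-1)\nu}{pK\,|C_\D^\sigma|}\in(0,1].
\]
Because the radicand in \eqref{eqn:CDj} depends on $C_\D^\sigma$ only through its square, substituting this definition of $\xi$ collapses \eqref{eqn:CDj} into $C_\D^\sigma=\chi^\sigma(\xi)$. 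Taking absolute values and using $|C_\D^\sigma|=\beta/\xi$ yields the necessary solvability condition
\[
\xi\,|\chi^\sigma(\xi)|=\beta.
\]

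Next I would apply the triangle inequality entry-by-entry to $\chi^\sigma$, obtaining
\[
|\chi^\sigma(\xi)|\le \frac{1}{n}\sum_{i=1}^{n}\sqrt{1-\left(\frac{2i-n-1}{n-1}\xi\right)^2}=\chi^{\mathbf{1}}(\xi),
\]
where $\mathbf{1}\in\Sigma_n$ denotes the sequence with $\sigma_i=1$ for all $i\in[n]$, i.e.\ the sign pattern appearing in \eqref{eqn:prop3a}. Combining the two displayed equations gives $\beta=\xi\,|\chi^\sigma(\xi)|\le\xi\chi^{\mathbf{1}}(\xi)\le\max_{\xi\in[0,1]}\xi\chi^{\mathbf{1}}(\xi)$, contradicting \eqref{eqn:prop3a}. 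Since the starting $\sigma$ was arbitrary, no equilibrium exists at $b_1=0$.

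I do not anticipate any serious obstacle in this plan: the whole argument is a change of variables from $C_\D^\sigma$ to $\xi$ followed by a single sign-free majorisation. The only mildly delicate point is justifying that $C_\D^\sigma\ne 0$ before performing the substitution, which is automatic from the domain of $\arcsin$ together with the assumption $n\ge 3$ (which guarantees the natural frequencies \eqref{eqn:omegai} are not all zero).
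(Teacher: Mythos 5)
Your argument is essentially the paper's: the termwise bound $|\chi^\sigma(\xi)|\le\chi^{\mathbf{1}}(\xi)$ (with $\mathbf{1}$ the all-ones sign pattern) upgrades \eqref{eqn:prop3a} to $\beta>\max_{\xi\in[0,1]}|\xi\chi^\sigma(\xi)|$ for every $\sigma\in\Sigma_n$, which is exactly the paper's intermediate inequality \eqref{eqn:prop3a'}; the paper then phrases the conclusion as boundedness of $\bar{\chi}^\sigma$ so that \eqref{eqn:con} cannot hold near $b_1=0$, while you equivalently derive the solvability condition $\xi|\chi^\sigma(\xi)|=\beta$ directly from \eqref{eqn:CDj}. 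One small caveat: Theorem~\ref{thm:3a}(i) asserts that the $v^\sigma$ exhaust the equilibria only for $b_1\neq 0$; at $b_1=0$ the system \eqref{eqn:dsys1} is invariant under $v\mapsto v+c(1,\dots,1)$, so an equilibrium need not itself have the form $v^\sigma$ (cf.\ Remark~\ref{rmk:3a}(iii)), and you should first normalize by choosing $c$ with $\sum_i\sin(\zeta_i+c)=0$ and $\sum_i\cos(\zeta_i+c)\ge0$ before reading off $C_\D^\sigma=\chi^\sigma(\xi)$. After that normalization your reduction and majorization go through unchanged, so the gap is cosmetic rather than structural.
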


\begin{proof}
Assume that the hypothesis of the proposition holds.
Then for any $\sigma\in\Sigma_n$
\begin{equation}
\beta>\max_{\xi\in[0,1]}|\xi\chi^\sigma(\xi)|,
\label{eqn:prop3a'}
\end{equation}
so that $\bar{\chi}^\sigma(\xi)$ is bounded on $[0,1]$.
This implies the desired result
 since the relation \eqref{eqn:con} does not hold near $b_1=0$.
\end{proof}

\subsection{Bifurcations}
We next state bifurcation results for the equilibria in \eqref{eqn:dsys1}
 given by Theorem~\ref{thm:3a} and Corollary~\ref{cor:3a}.
Regarding $b_1$ as a control parameter,
 a branch of equilibria is determined by \eqref{eqn:con} for each $\sigma\in\Sigma_n$.
As in Theorem~5.1 of \cite{Y24a},
 we have the following.

\begin{thm}\
\label{thm:3b}
Fix the values of $a,pK>0$ and choose $b_1$ as a control parameter.
Then the following hold.
\begin{enumerate}
\setlength{\leftskip}{-1.5em}
\item[\rm(i)]
The equilibrium $v^{\sigma}$ suffers a supercritical 
 $($resp. subcritical$)$ saddle-node bifurcation at
\begin{equation}
b_1=\frac{pK}{\bar{\chi}^{\sigma}(\xi_0)}
\label{eqn:thm3b1}
\end{equation}
in \eqref{eqn:dsys1} if and only if 
 $\bar{\chi}^{\sigma}(\xi)$ has a local maximum $($resp. a local minimum$)$ at $\xi=\xi_0$ on $(0,1)$ .
In particular, if $\bar\chi^\sigma(\xi)$ is bounded on $[0,1]$ and
\begin{equation}
\sigma_1,\sigma_n=1,\quad
\bar{\chi}^\sigma(1)\ge 0\quad
(\mbox{resp. }
\sigma_1,\sigma_n=-1,\quad
\bar{\chi}^\sigma(1)\le 0),
\label{eqn:thm3b2}
\end{equation}
then a supercritical $($resp. subcritical$)$ saddle-node bifurcation occurs.
Moreover, if $b_1>0$, $\sigma_i=1$, $i\in[n]$,
 and condition~\eqref{eqn:prop3a} holds,
 then $\bar{\chi}^{\sigma}(\xi)$ has a unique local maximum 
 and no local minimum, and $v^\sigma$ suffers only one 
 saddle-node bifurcation.
\item[\rm(ii)]
Let $v^{\sigma^{\pm\pm}}$ be four equilibria 
 in \eqref{eqn:dsys1} such that 
\[
\sigma_i^{++}=\sigma_i^{--}=\sigma_i^{+-}=\sigma_i^{-+},\quad
i\neq 1,n,
\]
and
\[
\sigma_1^{++},\sigma_1^{+-}=1,\quad
\sigma_1^{-+},\sigma_1^{--}=-1,\quad
\sigma_n^{++},\sigma_n^{-+}=1,\quad
\sigma_n^{+-},\sigma_n^{--}=-1.
\]
If $\bar{\chi}^\sigma(1)\neq 0$, then a pitchfork bifurcation where
 $v^{\sigma^{++}}$ changes to $v^{\sigma^{--}}$
 and where $v^{\sigma^{+-}}$ and $v^{\sigma^{-+}}$ are born 
 occurs at
\begin{equation}
b_1=\frac{pK}{\bar{\chi}^{\sigma}(1)},
\label{eqn:thm3b3}
\end{equation}
where any of $\sigma^{\pm\pm}$ may be chosen as $\sigma$.
Moreover, the bifurcation is super- or subcritical if
\begin{equation}
\frac{\d\bar{\chi}^{\sigma}}{\d\xi}(1)\quad
\mbox{with $\sigma=\sigma^{+-}$ and $\sigma^{-+}$}
\label{eqn:thm3b4}
\end{equation}
is positive or negative,
 where any of $\sigma^{+-}$ and $\sigma^{-+}$ may be chosen as $\sigma$.
\end{enumerate}
\end{thm}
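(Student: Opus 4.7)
The plan is to parametrize the branch of equilibria $v^\sigma$ by $\xi\in(0,1]$ via Corollary~\ref{cor:3a}, with the control parameter determined by $b_1 = pK/\bar{\chi}^\sigma(\xi)$. In this picture a bifurcation of $v^\sigma$ corresponds to a singularity of the map $\xi\mapsto b_1$ (giving a saddle-node at interior turning points of $\bar{\chi}^\sigma$) or to a coincidence of multiple $\sigma$-branches at the boundary $\xi=1$ (giving the pitchfork). I will follow the strategy of Theorem~5.1 of \cite{Y24a}, adapted to the presence of the linear feedback term $-b_1\sin v_i$, which shifts the effective coupling strength from $pKC_\D^\sigma$ to $pKC_\D^\sigma+b_1$ and is what allows $b_1$ to serve as a proper unfolding parameter.

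For part~(i), I first compute the Jacobian $J^\sigma$ of the right-hand side of \eqref{eqn:dsys1} at $v=v^\sigma$. It has the rank-one-plus-diagonal form
\[
J^\sigma_{ii} = -(pKC_\D^\sigma+b_1)\cos v_i^\sigma - \tfrac{pK}{n}\cos v_i^\sigma\sum_{j}\cos v_j^\sigma + \tfrac{pK}{n}\cos v_i^\sigma\cos v_i^\sigma,
\]
plus an analogous off-diagonal structure; using $\cos v_i^\sigma = \sigma_i\sqrt{1-((2i-n-1)\nu/(pKC_\D^\sigma+b_1))^2}$ I express $\det J^\sigma$ as a scalar multiple of $\d\bar{\chi}^\sigma/\d\xi$ evaluated at the corresponding $\xi$. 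Thus $J^\sigma$ has a simple zero eigenvalue iff $\bar{\chi}^\sigma{}'(\xi)=0$, and the standard transversality and nondegeneracy hypotheses of the saddle-node bifurcation theorem (e.g.\ Theorem~3.4.1 of \cite{GH83}) reduce to $\bar{\chi}^\sigma{}''(\xi_0)\neq 0$; the sign of this second derivative (max vs.\ min) determines supercritical vs.\ subcritical. To obtain the second assertion of~(i), I analyze the boundary behaviour of $\bar{\chi}^\sigma$: since $\bar{\chi}^\sigma(0)=0$ and, under the stated hypotheses, $\bar{\chi}^\sigma$ is bounded on $[0,1]$ and $\bar{\chi}^\sigma(1)\ge 0$, the intermediate-value / Rolle-type argument produces at least one interior local maximum. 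For the uniqueness statement when $\sigma_i=1$ for all $i$ and \eqref{eqn:prop3a} holds, I verify by direct differentiation that $\chi^\sigma$ is then concave on $[0,1]$, from which $\bar{\chi}^\sigma=\xi/(\beta-\xi\chi^\sigma)$ inherits a single local maximum and no local minimum on $(0,1)$.

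For part~(ii), I exploit the observation that at $\xi=1$ the angles $\phi_1$ and $\phi_n$ become $-\pi/2$ and $\pi/2$, at which both branches of the definition \eqref{eqn:vis} collapse; consequently the four equilibria $v^{\sigma^{\pm\pm}}$ literally coincide at $\xi=1$ (and nowhere else among these four, by Remark~\ref{rmk:3a}(ii)). Moreover the relation \eqref{eqn:CDj} shows $C_\D^{\sigma^{++}}=C_\D^{\sigma^{--}}$ and $C_\D^{\sigma^{+-}}=C_\D^{\sigma^{-+}}$, giving the $\mathbb{Z}_2\times\mathbb{Z}_2$ symmetry $(\sigma_1,\sigma_n)\mapsto(-\sigma_1,-\sigma_n)$ and $(\sigma_1,\sigma_n)\mapsto(\sigma_n,\sigma_1)$ that forces the bifurcation to be of pitchfork type rather than transcritical. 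To pin down the critical value of $b_1$ I compute the limit of $b_1 = pK/\bar{\chi}^\sigma(\xi)$ as $\xi\to 1^-$ along each of the four branches and observe that all four yield the common value $pK/\bar{\chi}^\sigma(1)$ whenever $\bar{\chi}^\sigma(1)\neq 0$. A Lyapunov–Schmidt reduction on the two-dimensional kernel of $J^\sigma$ at $\xi=1$ (spanned by the coordinate directions $e_1$ and $e_n$, reflecting the two simultaneous folds of \eqref{eqn:vis}) then yields a normal form whose cubic coefficient is proportional to $\d\bar{\chi}^\sigma/\d\xi(1)$; the sign of this derivative determines super-\ vs.\ subcriticality.

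The main technical obstacle, as in \cite{Y24a}, is the Jacobian analysis: the mean-field coupling produces a rank-one correction to a diagonal matrix, so $J^\sigma$ is not purely diagonal and its determinant must be extracted by a Sherman–Morrison-type identity and then matched, after a careful computation, to $\d\bar{\chi}^\sigma/\d\xi$. The second serious point is the double degeneracy at $\xi=1$: naively one would expect two simultaneous codimension-one bifurcations at $i=1$ and $i=n$, but the symmetry and the shared constraint \eqref{eqn:CDj} force them to occur at the same $b_1$, so one must verify via Lyapunov–Schmidt that a single pitchfork (rather than, e.g., a degenerate bifurcation of higher codimension) captures the four coalescing branches. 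The remaining items — the assertion that $\sigma_1,\sigma_n=1$ with $\bar{\chi}^\sigma(1)\ge 0$ forces a supercritical fold, and that all-$+$ admits a unique saddle-node under \eqref{eqn:prop3a} — are then elementary consequences of the asymptotic and concavity analysis of $\bar{\chi}^\sigma$ described above.
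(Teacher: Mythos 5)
Your overall skeleton (parametrize each branch by $\xi$ through \eqref{eqn:con}, locate saddle-nodes at interior extrema of $\bar{\chi}^\sigma$, and treat $\xi=1$ as the coalescence point of the four branches) matches the paper's strategy, and your concavity argument for the uniqueness of the local maximum when $\sigma_i=1$ for all $i$ is essentially equivalent to the paper's monotonicity computation in \eqref{eqn:thm3b6}. However, there are concrete gaps. First, in part~(i) your derivation of the existence of an interior local maximum from ``$\bar{\chi}^\sigma(0)=0$, $\bar{\chi}^\sigma$ bounded, $\bar{\chi}^\sigma(1)\ge 0$'' is insufficient: a bounded continuous function with those endpoint values can be monotonically increasing and have no interior maximum. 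The missing ingredient, which the paper supplies in \eqref{eqn:thm3b5}, is that $\sigma_1=\sigma_n=1$ forces $\d\chi^\sigma/\d\xi\to-\infty$ and hence $\d\bar{\chi}^\sigma/\d\xi\to-\infty$ as $\xi\to1$; only this blow-up, combined with boundedness, yields the interior maximum. Second, your reduction of the ``if and only if'' in part~(i) to the standard saddle-node theorem requires the nondegeneracy $\bar{\chi}^{\sigma\prime\prime}(\xi_0)\neq0$, which is not among the hypotheses; the paper avoids this entirely by directly counting solutions $\xi$ of \eqref{eqn:con} for $b_1$ on either side of the critical value (zero on one side, two on the other), which works even at degenerate extrema. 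Relatedly, the identity $\det J^\sigma\propto\d\bar{\chi}^\sigma/\d\xi$ is asserted but never computed; the paper establishes the link between zero eigenvalues and critical points of $\bar{\chi}^\sigma$ only later (Lemma~\ref{lem:3b}) by an isolatedness/center-manifold argument, not by a determinant formula.

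The most serious problem is in part~(ii). Your Lyapunov--Schmidt reduction is built on a two-dimensional kernel of the Jacobian at $\xi=1$ ``spanned by $e_1$ and $e_n$,'' but this is unjustified and almost certainly false: at $\xi=1$ one has $\cos v_1^\sigma=\cos v_n^\sigma=0$, yet the entries $A_{1j}=\frac{pK}{n}\cos(v_j^\sigma-v_1^\sigma)=\frac{pK}{n}\sin v_j^\sigma\sin v_1^\sigma$ of \eqref{eqn:jmat1} do not vanish, so $e_1$ and $e_n$ are not kernel vectors, and a pitchfork is in any case a codimension-one bifurcation with a one-dimensional center manifold. The paper's argument needs no reduction at all: the opposite signs of the derivative blow-up in \eqref{eqn:thm3b5} for $\sigma^{++}$ and $\sigma^{--}$ show that these two branches join continuously through $\xi=1$ (one branch persisting on both sides of $b_1=pK/\bar{\chi}^\sigma(1)$), while Theorem~\ref{thm:3a}(ii) guarantees that $v^{\sigma^{+-}}$ and $v^{\sigma^{-+}}$ exist as a pair on exactly one side, the side being determined by the sign of \eqref{eqn:thm3b4}. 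That branch-counting picture is precisely the pitchfork, and it sidesteps the normal-form computation you would otherwise have to carry out. You should replace the two-dimensional reduction with this counting argument, or at minimum verify the actual kernel dimension before invoking Lyapunov--Schmidt.
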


\begin{proof}
We first recall from Corollary~\ref{cor:3a} that
 $v^\sigma$ is an equilibrium in \eqref{eqn:dsys1}
 if $\xi\in(0,1)$ satisfies \eqref{eqn:con}.
Hence, if $\bar{\chi}^\sigma(\xi)$ has a local maximum 
 (resp. a local minimum) at $\xi=\xi_0$ on $(0,1)$, 
 then any value of $\xi$ does not satisfy \eqref{eqn:con}
 for values of $b_1$ that are less (resp. greater) than and close to \eqref{eqn:thm3b1} 
 but two values of $\xi$ satisfy it for values of $b_1$ that are 
 greater (resp. less) than and close to \eqref{eqn:thm3b1},
 so that a supercritical $($resp. subcritical$)$ saddle-node bifurcation of $v^\sigma$ occurs there.
If $\bar\chi^\sigma(\xi)$ is bounded on $[0,1]$
 and condition \eqref{eqn:thm3b2} holds,
 then $\bar{\chi}^\sigma(\xi)$ has a local maximum (resp. a local minimum),
 since $\bar{\chi}^\sigma(0)=0$ and
\begin{equation}
\frac{\d\bar{\chi}^{\sigma}}{\d\xi}(\xi)
=\frac{\displaystyle\pm\beta+\xi^2\frac{\d \chi^{\sigma}}{\d \xi}(\xi)}
 {(\pm\beta-\xi\chi^{\sigma}(\xi))^2}\to-\infty\quad
(\mbox{resp. }+\infty)
\label{eqn:thm3b5}
\end{equation}
as $\xi\to 1$.
Note that
\begin{align*}
&
\frac{\d \chi^{\sigma}}{\d \xi}(\xi)\\
&
=-\frac{1}{n}\sum_{i=1}^{n}\sigma_i\left(\frac{2i-n-1}{n-1}\right)^2\xi
\Bigg/\sqrt{1-\left(\frac{2i-n-1}{n-1}\xi\right)^2}\to-\infty\quad
(\mbox{resp. }+\infty)
\end{align*}
as $\xi\to 1$ under the conditions.

Assume that $b_1>0$, $\sigma_i=1$, $i\in[n]$,
 and condition~\eqref{eqn:prop3a} holds.
Since $pK\chi^\sigma(\xi)+b_1>0$, we have
\begin{equation*}
\bar{\chi}^\sigma(\xi) = \frac{\xi}{\beta-\xi\chi^\sigma(\xi)}>0
\end{equation*}
and
\begin{equation}
\frac{\d \bar{\chi}^\sigma}{\d \xi}(\xi) = 
\frac{\displaystyle\beta-\frac{\xi}{n}\sum_{i=1}^{n}\left(\frac{2i-n-1}{n-1}\xi\right)^2
 \Bigg/ \sqrt{1-\left(\frac{2i-n-1}{n-1}\xi\right)^2}}%
 {(\beta-\xi\chi^\sigma(\xi))^2}.
\label{eqn:thm3b6}
\end{equation}
The numerator in \eqref{eqn:thm3b6} is monotonically decreasing on $(0,1)$
 and positive at $\xi=0$ and goes to  $-\infty$ as $\xi\to1$
 while the denominator is always positive.
Hence, $\bar{\chi}^\sigma(\xi)$ has a unique local maximum
 at which its derivative has a unique zero,
 and it has no local minimum.
This proves part~(i).

On the other hand, assume that $\bar{\chi}^\sigma(1)\neq 0$.
Then at $\xi=1$, the four equilibria $v^{\sigma^{\pm\pm}}$ coincide
 and the corresponding functions $\bar{\chi}^{\sigma^{\pm\pm}}(\xi)$ have the same value.
From \eqref{eqn:thm3b5} we see that
\[
\frac{\d\bar{\chi}^{\sigma^{++}}}{\d\xi}(\xi)\to-\infty,\quad
\frac{\d\bar{\chi}^{\sigma^{--}}}{\d\xi}(\xi)\to+\infty
\]
as $\xi\to 1$.
Moreover, by Theorem~\ref{thm:3a}(ii),
 $v^{\sigma^{+-}}$ and $v^{\sigma^{-+}}$ exist in a pair
 for values of $b_1$ greater or less than, but close to, the value in \eqref{eqn:thm3b3}.
The alternative is decided by the sign of \eqref{eqn:thm3b4},
 whose value is the same for $\sigma=\sigma^{+-}$ and $\sigma=\sigma^{-+}$.
This proves part~(ii).
\end{proof} 

From Theorem~\ref{thm:3b}
 we can also estimate the numbers of saddle-node and pitchfork bifurcations
 if condition~\eqref{eqn:prop3a} holds, as in Remark~5.2 and Proposition~5.3 of \cite{Y24a}.

\begin{prop}
\label{prop:3b'}
Suppose that condition~\eqref{eqn:prop3a} holds.
Then both the numbers of saddle-node and pitchfork bifurcations are at least $2^{n-2}$.
\end{prop}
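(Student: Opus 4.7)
The approach is a direct counting argument that parses the equilibria via the classification of Theorem~\ref{thm:3b}. The plan begins by upgrading the hypothesis: condition~(\ref{eqn:prop3a}), although stated for the all-ones sign pattern, implies (\ref{eqn:prop3a'}) for every $\sigma\in\Sigma_n$, since $|\chi^\sigma(\xi)|\le\chi^{\mathbf{1}}(\xi)$ where $\mathbf{1}=(1,\ldots,1)$, and the same $\beta$ therefore dominates $|\xi\chi^\sigma(\xi)|$ uniformly in $\sigma$. As a consequence $\bar{\chi}^\sigma$ is bounded on $[0,1]$ for every $\sigma$, and the denominator $\pm\beta-\chi^\sigma(1)$ in $\bar{\chi}^\sigma(1)$ is never zero, so $\bar{\chi}^\sigma(1)\neq 0$.

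Next, I would partition $\Sigma_n$ into $2^{n-2}$ equivalence classes by fixing the interior tuple $(\sigma_2,\ldots,\sigma_{n-1})$. Each class is precisely the quadruple $\{\sigma^{++},\sigma^{+-},\sigma^{-+},\sigma^{--}\}$ that appears in Theorem~\ref{thm:3b}(ii). Because $\bar{\chi}^\sigma(1)\neq 0$, Theorem~\ref{thm:3b}(ii) applies to each class and produces exactly one pitchfork bifurcation per class. Distinct classes involve disjoint quadruples of equilibria (Remark~\ref{rmk:3a}(ii)), so these $2^{n-2}$ pitchforks are pairwise distinct events.

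For the saddle-node count, I would select the representative $\sigma^{++}$ from each class, which by construction has $\sigma_1=\sigma_n=1$. A short self-consistency check of~(\ref{eqn:con}) along the branch $b_1=pK/\bar{\chi}^{\sigma^{++}}(\xi)$ shows that the upper sign is the one realized for $b_1>0$ (the lower sign forces $b_1<0$), whence $\bar{\chi}^{\sigma^{++}}(1)=1/(\beta-\chi^{\sigma^{++}}(1))>0$ thanks to (\ref{eqn:prop3a'}). This places $\sigma^{++}$ squarely in the hypothesis of the ``in particular'' clause of Theorem~\ref{thm:3b}(i), yielding a supercritical saddle-node on the $v^{\sigma^{++}}$ branch at some $\xi_0\in(0,1)$. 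Different $\sigma^{++}$'s correspond to different equilibrium branches, so these contribute $2^{n-2}$ pairwise distinct saddle-nodes. Moreover, since the saddle-nodes live at interior $\xi_0\in(0,1)$ whereas the pitchforks sit at $\xi=1$, there is no overlap between the two counts either.

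The only subtlety, rather than a real obstacle, is the promotion from (\ref{eqn:prop3a}) to (\ref{eqn:prop3a'}) and the self-consistency verification that the $b_1>0$ branch corresponds to the upper sign in (\ref{eqn:con}); once these are in place, the counts follow immediately from Theorem~\ref{thm:3b}.
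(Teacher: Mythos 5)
Your proposal is correct and follows essentially the same route as the paper: promote \eqref{eqn:prop3a} to \eqref{eqn:prop3a'} for all $\sigma$ (so that $\bar{\chi}^\sigma$ is bounded and $\bar{\chi}^\sigma(1)\neq 0$), count the $2^{n-2}$ sign patterns satisfying \eqref{eqn:thm3b2} for the saddle-nodes via Theorem~\ref{thm:3b}(i), and count the $2^{n-2}$ quadruple classes fixed by the interior tuple for the pitchforks via Theorem~\ref{thm:3b}(ii). Your additional checks (the sign-consistency of the $b_1>0$ branch and the distinctness of the bifurcation events) are details the paper leaves implicit, not a different argument.
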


\begin{proof}
We assume that condition~\eqref{eqn:prop3a} holds.
From the proof of Proposition~\ref{prop:3a}
 we see that $|\bar{\chi}^\sigma(\xi)|$ is bounded on $[0,1]$
 as well as $\bar{\chi}^\sigma(1)\neq 0$ for any $\sigma\in\Sigma_n$.
Hence, the number of $\sigma\in\Sigma_n$ satisfying condition \eqref{eqn:thm3b2} is $2^{n-2}$,
 so that by Theorem~\ref{thm:3b}(i), $2^{n-2}$ saddle-node bifurcations occur.
Moreover, it follows from Theorem~\ref{thm:3b}(ii)
 that $2^{n-2}$ pitchfork bifurcations occur since $\bar{\chi}^\sigma(1)\neq 0$ for any $\sigma\in\Sigma_n$.
\end{proof}

As in Proposition~4.4 of \cite{Y24a}, we also have the following.

\begin{prop}
\label{prop:3b}
For any $\sigma\in\Sigma_n$,
 the equilibrium $v^\sigma$ suffers no Hopf bifurcation.
\end{prop}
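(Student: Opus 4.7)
The plan is to show that at any equilibrium $v^\sigma$ the Jacobian matrix $J$ of the right-hand side of \eqref{eqn:dsys1} is a real symmetric matrix. Since a Hopf bifurcation requires the existence of a pair of nonzero purely imaginary eigenvalues and every real symmetric matrix has a real spectrum, this will immediately rule out Hopf bifurcations of $v^\sigma$ for every $\sigma\in\Sigma_n$.

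First I would linearize the right-hand side of \eqref{eqn:dsys1} entry by entry. A direct differentiation, using that $\sin(v_i-v_i)\equiv 0$ is constant in $v_i$, yields the off-diagonal entries
$$
J_{ij}=\frac{pK}{n}\cos(v_j^\sigma-v_i^\sigma),\quad i\neq j,
$$
and the diagonal entries
$$
J_{ii}=-\frac{pK}{n}\sum_{j\neq i}\cos(v_j^\sigma-v_i^\sigma)-b_1\cos v_i^\sigma.
$$
Because cosine is even, $J_{ij}=J_{ji}$, and $J_{ii}$ is manifestly real. Hence $J$ is real symmetric, its spectrum is contained in $\Rset$, and no nonzero purely imaginary eigenvalue can appear, regardless of the values of the parameters $a,pK,b_1$. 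Therefore no Hopf bifurcation can occur along any branch of equilibria $v^\sigma$.

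The argument is essentially immediate; the only point worth flagging is that the feedback term $-b_1\sin v_i$, which breaks the rotational invariance of the uncontrolled KM \eqref{eqn:dsys0}, does not destroy the symmetry of the linearization. Because the feedback depends only on the single coordinate $v_i$, its derivative contributes only to the diagonal of $J$ and leaves the symmetric off-diagonal mean-field Kuramoto skeleton intact. The proof thus parallels Proposition~4.4 of \cite{Y24a}, with the extra diagonal correction $-b_1\cos v_i^\sigma$ being harmless for symmetry.
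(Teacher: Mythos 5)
Your proof is correct and follows exactly the paper's own argument: compute the Jacobian of \eqref{eqn:dsys1} at an equilibrium (cf.\ \eqref{eqn:jmat1}), observe that it is real symmetric because the feedback term only contributes to the diagonal, and conclude the spectrum is real so no purely imaginary pair can arise. Nothing is missing.
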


\begin{proof}
Let $A$ denote the Jacobian matrix for the vector field of \eqref{eqn:dsys1}.
We compute each element of $A$ as
\begin{equation}
A_{ij} = 
\begin{cases}
\displaystyle
-\frac{pK}{n}\sum_{j=1,j\neq i}^{n}\cos\left(v_j-v_i\right)
-b_1\cos v_i & \mbox{if $i=j$}; \\
\displaystyle
\frac{pK}{n}\cos\left(v_j-v_i\right) & \mbox{if $i\neq j$}.
\end{cases}
\label{eqn:jmat1}
\end{equation}
Hence, the matrix $A$ has only real eigenvalues
 since it is symmetric.
This proves the proposition.
\end{proof}

\begin{rmk}
\label{rmk:3c}
From Remark~{\rm\ref{rmk:3a}(ii)} and Proposition~$\ref{prop:3b}$ we see that
 for $b_1\neq 0$, no bifurcation of equilibria for \eqref{eqn:dsys1} occurs
 except for those detected in Theorem~$\ref{thm:3b}$.
\end{rmk}

\subsection{Stability}

We finally state a theorem on the stability of the equilibria in \eqref{eqn:dsys1}
 given by Theorem~\ref{thm:3a} and Corollary~\ref{cor:3a}.
As in Theorem~6.1 of \cite{Y24a},
 we prove the following.
 
\begin{thm}
\label{thm:3c}
Suppose that $b_1>0$ and condition~\eqref{eqn:prop3a} holds.
Then the following hold$:$
\begin{enumerate}
\setlength{\leftskip}{-1.5em}
\item[\rm(i)]
The equilibrium $v^\sigma$ with $\sigma_i=1$ for all 
 $i\in[n]$ is asymptotically stable 
 if $\xi
 <\xi_0$
 and unstable if $\xi>\xi_0$, where $\xi_0$ is 
 the unique local maximum of $\bar{\chi}^{\sigma}(\xi)$ 
 detected in Theorem~{\rm\ref{thm:3b}(i)}.
 
\item[\rm(ii)]
The equilibrium $v^{\sigma}$ is unstable if $\sigma_i=-1$ 
 for some $i\in[n]$.
\end{enumerate}
\end{thm}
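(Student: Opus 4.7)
The plan is to express the Jacobian of \eqref{eqn:dsys1} at $v^\sigma$ in a form whose eigenvalues can be read off essentially by inspection, and then to translate the resulting stability criterion into the slope of $\bar\chi^\sigma$. Writing $c_i = \cos v_i^\sigma$, $s_i = \sin v_i^\sigma$, $c=(c_i)$, $s=(s_i)$ and $D_c = \mathrm{diag}(c_1,\dots,c_n)$, and expanding $\cos(v_j-v_i)=c_ic_j+s_is_j$ in \eqref{eqn:jmat1}, one would use Lemma~\ref{lem:3a} (which gives $\sum_i s_i=0$ at every equilibrium) together with $c_i^2+s_i^2=1$ to rewrite the Jacobian compactly as
\begin{equation*}
A = \frac{pK}{n}(cc^T + ss^T) - (pKC_\D^\sigma + b_1)D_c.
\end{equation*}

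For part~(i), $\sigma_i=1$ for every $i$ yields $c_i>0$, so $D_c$ is positive definite, while condition \eqref{eqn:prop3a} combined with $b_1=pK/\bar\chi^\sigma(\xi)$ forces $pKC_\D^\sigma + b_1 = pK\beta/\xi>0$. Performing the inertia-preserving congruence $\tilde A := D_c^{-1/2}AD_c^{-1/2}$ one obtains
\begin{equation*}
\tilde A = \frac{pK}{n}\bigl(\tilde c\tilde c^T + \tilde s\tilde s^T\bigr) - (pKC_\D^\sigma + b_1)I,
\end{equation*}
with $\tilde c_i = \sqrt{c_i}$ and $\tilde s_i = s_i/\sqrt{c_i}$. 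The key observation is that $\langle \tilde c,\tilde s\rangle = \sum_i s_i = 0$ by Lemma~\ref{lem:3a}, so the rank-two piece is already spectrally diagonal with nonzero eigenvalues $\|\tilde c\|^2 = nC_\D^\sigma$ and $\|\tilde s\|^2 = \sum_i s_i^2/c_i$. Hence the spectrum of $\tilde A$ consists of $-(pKC_\D^\sigma+b_1)$ with multiplicity $n-2$, the eigenvalue $-b_1$, and the single sign-sensitive eigenvalue
\begin{equation*}
\lambda := \frac{pK}{n}\sum_{i=1}^n \frac{s_i^2}{c_i} - (pKC_\D^\sigma + b_1).
\end{equation*}
The first two are strictly negative, so stability is controlled entirely by the sign of $\lambda$. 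Substituting $s_i=(2i-n-1)\xi/(n-1)$, $c_i=\sqrt{1-s_i^2}$ and $pKC_\D^\sigma+b_1=pK\beta/\xi$, a direct computation produces
\begin{equation*}
\lambda = -\frac{pK}{\xi}\Bigl(\beta + \xi^2 \frac{\d\chi^\sigma}{\d\xi}(\xi)\Bigr),
\end{equation*}
and the derivative formula $\frac{\d\bar\chi^\sigma}{\d\xi}(\xi) = \bigl(\beta + \xi^2 \frac{\d\chi^\sigma}{\d\xi}(\xi)\bigr)\big/(\beta-\xi\chi^\sigma(\xi))^2$ established in the proof of Theorem~\ref{thm:3b} yields $\lambda<0$ iff $\frac{\d\bar\chi^\sigma}{\d\xi}(\xi)>0$. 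Since Theorem~\ref{thm:3b}(i) guarantees that $\bar\chi^\sigma$ has its unique local maximum at $\xi_0$ and no local minimum, and since $\bar\chi^\sigma(0)=0$, the derivative $\frac{\d\bar\chi^\sigma}{\d\xi}$ is positive on $(0,\xi_0)$ and negative on $(\xi_0,1)$, giving asymptotic stability for $\xi<\xi_0$ and instability for $\xi>\xi_0$.

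For part~(ii) one no longer has $D_c>0$, so the congruence trick is unavailable and I would instead directly exhibit a direction $x$ with $x^T A x>0$, splitting into two sub-cases. If $C_\D^\sigma<0$, taking $x=\mathbf 1$ and using $\sum_i s_i=0$ and $\sum_i c_i=nC_\D^\sigma$ gives
\begin{equation*}
\mathbf 1^T A \mathbf 1 = \frac{pK}{n}(nC_\D^\sigma)^2 - (pKC_\D^\sigma+b_1)nC_\D^\sigma = -nC_\D^\sigma b_1 > 0.
\end{equation*}
If instead $C_\D^\sigma\ge 0$, then $pKC_\D^\sigma+b_1>0$, and for any index $k$ with $\sigma_k=-1$ one has $c_k\le 0$, so
\begin{equation*}
A_{kk} = -(pKC_\D^\sigma+b_1)c_k + \frac{pK}{n} \ge \frac{pK}{n}>0,
\end{equation*}
and the choice $x=e_k$ suffices. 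In either sub-case the symmetric matrix $A$ has a strictly positive eigenvalue, so $v^\sigma$ is unstable. The main technical obstacle will be the algebraic identification of $\lambda$ with the slope of $\bar\chi^\sigma$: this requires patient use of the relation $\xi=(n-1)\nu/(pKC_\D^\sigma+b_1)$ to convert $\sum_i s_i^2/c_i$ into a derivative of $\chi^\sigma$, while consistently tracking the $+\beta$ branch of \eqref{eqn:con} that is selected when $pKC_\D^\sigma+b_1>0$.
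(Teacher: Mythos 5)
Your proof is correct, but it follows a genuinely different route from the paper. You compute the inertia of the Jacobian directly: at any equilibrium, Lemma~\ref{lem:3a} and $\sum_j\cos v_j^\sigma=nC_\D^\sigma$ turn \eqref{eqn:jmat1} into the exact rank-two-plus-diagonal form $A=\tfrac{pK}{n}(cc^\T+ss^\T)-(pKC_\D^\sigma+b_1)D_c$; for $\sigma\equiv 1$ the congruence by $D_c^{-1/2}$ (valid since $c_i>0$ for $\xi<1$) together with the orthogonality $\langle\tilde c,\tilde s\rangle=\sum_i s_i=0$ diagonalizes the problem, isolating the single sign-sensitive eigenvalue $\lambda$, whose sign you correctly identify with that of $-\,\d\bar{\chi}^{\sigma}/\d\xi$ via \eqref{eqn:thm3b5}; and for part~(ii) the test vectors $\mathbf 1$ (when $C_\D^\sigma<0$, giving $\mathbf 1^\T A\mathbf 1=-nC_\D^\sigma b_1>0$) and $e_k$ with $\sigma_k=-1$ (when $C_\D^\sigma\ge 0$, giving $A_{kk}\ge pK/n>0$) exhibit a positive eigenvalue of the symmetric matrix $A$. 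The paper instead argues by continuation: it counts eigenvalue signs in the limit $\xi\to 0$ (where $A\sim b_1A_0$ with $A_0$ diagonal), shows via Lemma~\ref{lem:3b} that eigenvalues can cross zero only at extrema of $\bar{\chi}^{\sigma}$ and one at a time, and uses the center-manifold normal form \eqref{eqn:lem3b} to determine the crossing direction, concluding for (ii) that the positive-eigenvalue count never vanishes along the branch. Your approach is more elementary and quantitative—it yields the full inertia at every $\xi$ (indeed the explicit congruent eigenvalues $-(pKC_\D^\sigma+b_1)$ with multiplicity $n-2$, $-b_1$, and $\lambda$), avoids center-manifold machinery, and proves part~(ii) without invoking condition~\eqref{eqn:prop3a}—while the paper's continuation argument ties the stability change directly to the bifurcation structure of Theorem~\ref{thm:3b} and reuses the framework of \cite{Y24a}. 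The only points worth flagging, both of which you implicitly handle, are that the congruence degenerates at $\xi=1$ (outside the range of the claim) and that at $\xi=\xi_0$ linearization is inconclusive (the theorem asserts nothing there).
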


\begin{proof}
Assume that the hypotheses of the theorem are satisfied.
We fix $\sigma\in\Sigma_n$
 and follow the equilibrium branch $v^{\sigma}$
 for $\bar{\chi}^{\sigma}(\xi)$ as $\xi$ increases in $(0,1)$.
For clarity, we write this branch as $v^{\sigma}(\xi)$
 and denote by $f(v;b_1)$ the vector field of \eqref{eqn:dsys1}.
By Proposition~\ref{prop:3a},
 the branch $b_1=pK/\bar{\chi}^{\sigma}(\xi)$ does not meet $b_1=0$.
We begin with the following result corresponding to Lemma~6.2 of \cite{Y24a}.

\begin{lem}
\label{lem:3b}
Suppose that the multiplicity of the zero eigenvalue
 of the Jacobian matrix $\D_vf(v^\sigma(\xi);b_1)$ 
 with $\bar{\chi}^{\sigma}(\xi)$, $\sigma\in\Sigma_n$, changes at $\xi=\xi_*$ 
 as $\xi$ increases in $(0,1)$.
Then $\bar{\chi}^{\sigma}(\xi)$ has an extremum at $\xi=\xi_*$
 and the multiplicity changes by one.
\end{lem}

\begin{proof}
Since $\D_vf(v^{\sigma}(\xi);b_1)$ is symmetric
 as seen in the proof of Proposition~\ref{prop:3b},
 all its eigenvalues are real and semisimple.
Hence, an eigenvalue can change sign only by passing through zero.

Assume first that $\D_vf(v^{\sigma}(\xi);b_1)$ has a simple zero eigenvalue
 when Eq.~\eqref{eqn:con} holds with $\xi=\xi_*\in(0,1)$.
Let $\bar e\in\Rset^n$ be an eigenvector for the zero eigenvalue.
Then there exists a one-dimensional center manifold \cite{GH83,K04,W03}
 for the equilibrium $v^{\sigma}(\xi_*)$,
 and the restriction of \eqref{eqn:dsys1} to it is written as
\begin{equation}
\dot{v}_\c=c_1 v_\c^j+\cdots+c_2(b_1-b_{1*})+\cdots,\quad
v_\c\in\Rset,
\label{eqn:lem3b1}
\end{equation}
where $b_{1*}=pK/\bar{\chi}^{\sigma}(\xi_*)$, 
 $c_1,c_2\in\Rset$ are constants, and $j>1$ is an even integer.
Note that the eigenvalue does not  change its sign if $j$ is odd, by \eqref{eqn:lem3b1}.
If $\bar{e}$ is linearly independent of $(\d v^\sigma/\d\xi)(\xi_*)$,
 then another equilibrium branch would pass through $v^{\sigma}(\xi_*)$.
This is impossible, since $v^\sigma(\xi_*)$ is isolated for $\xi\in(0,1)$ by Remark~\ref{rmk:3a}(ii).
Thus, we may choose
\begin{equation}
\bar e=\frac{\d v^\sigma}{\d\xi}(\xi_*).
\label{eqn:lem3b2}
\end{equation}
Noting \eqref{eqn:con} and differentiating the relation
\[
f\left(v^\sigma(\xi); \frac{pK}{\bar{\chi}^{\sigma}(\xi)}\right)\equiv 0
\]
with respect to $\xi$ at $\xi=\xi_*$, we obtain
\begin{align*}
&~\frac{\d}{\d\xi}f\left(v^{\sigma}(\xi);
\frac{pK}{\bar{\chi}^{\sigma}(\xi)}\right)\bigg|_{\xi=\xi_*}\\
&=\D_v f(v^{\sigma}(\xi_*);b_{1*})\frac{\d v^\sigma}{\d\xi}(\xi_*)
-\frac{\partial f}{\partial b_1}(v^{\sigma}(\xi_*);b_{1*})
\frac{pK}{\left(\bar{\chi}^{\sigma}(\xi_*)\right)^2}
\frac{\d\bar{\chi}^{\sigma}}{\d\xi}(\xi_*) =0.
\end{align*}
Since
\[
\frac{\partial f}{\partial b_1}(v^{\sigma}(\xi_*);b_{1*})
=-(\sin v_1^\sigma(\xi_*),\ldots,\sin v_n^\sigma(\xi_*))^\T\neq 0,
\]
where the superscript `{\scriptsize$\T$}' represents the transpose operator,
 it follows from \eqref{eqn:lem3b2} that
\[
\frac{\d\bar{\chi}^{\sigma}}{\d\xi}(\xi_*)=0.
\]
Thus, an eigenvalue can become zero
 only at a critical point of $\bar{\chi}^{\sigma}(\xi)$.

Moreover, $\D_v f(v^{\sigma}(\xi);b_1)$ cannot have a non-simple zero eigenvalue.
Indeed, such an eigenvalue would contradict the isolation of $v^{\sigma}(\xi)$ for $\xi\in(0,1)$,
 as in the argument above.
This yields the desired result.
\end{proof}

We need another result for the proof of Theorem~\ref{thm:3c}.
Fix $\sigma\in\Sigma_n$.
Taking the limit $\xi=(n-1)\nu/|pKC_\D^{\sigma}+b_1|\to 0$, 
 we have $b_1\to+\infty$ and $\phi_i\to 0$ in \eqref{eqn:phi}, 
 so that by \eqref{eqn:vis}
\begin{equation*}
v_i^{\sigma}\to 
\begin{cases}
0 & \mbox{if $\sigma_i=1$;}\\
\pi\mbox{ or }-\pi & \mbox{if $\sigma_i=-1$,}
\end{cases}
\end{equation*}
since the branch $b_1=pK/\bar{\chi}^\sigma(\xi)$ does not intersect $b_1=0$.
Let $n_+$ and $n_-$ be, respectively,
 the numbers of $\sigma_i=1$ and $-1$, $i\in[n]$.
The Jacobian matrix $A=\D_vf(v^\sigma(\xi);b_1)$ is written as
\begin{equation*}
A=b_1 A_0+O(1),\quad
b_1\to\infty,
\end{equation*}
as $\xi\to0$, where $A_0$ is an $n\times n$ diagonal matrix
 whose $n_-$ and $n_+$ diagonal elements are $1$ and $-1$, respectively.
We immediately obtain the following.

\begin{lem}
\label{lem:3c}
Let $\xi>0$ be sufficiently small.
Then the numbers of positive and negative eigenvalues of the matrix $A$
 are $n_-$ and $n_+$, respectively.
In particular, the matrix $A$ has no positive eigenvalue
 if and only if $n_+=n$, i.e., $n_-=0$.
\end{lem}

We now prove part~(i).
Let $\sigma_i=1$, $i\in[n]$, i.e., $n_-=0$.
Since by Theorem~\ref{thm:3b}(i)
 $\bar{\chi}^\sigma(\xi)$ has an extremum only at $\xi=\xi_0$,
 it follows from Lemma~\ref{lem:3b} 
 that only one eigenvalue of $\D_v f(v;b_1)$ with $b_1=pK/\bar{\chi}^\sigma(\xi)$
 changes its sign once at $\xi=\xi_0$ when $\xi$ increases from zero to one.
Since all of its eigenvalues are negative near $\xi=0$ by Lemma~\ref{lem:3c},
 we obtain the desired result.

We next prove part~(ii).
We compute
\[
\frac{\d v_i^\sigma}{\d\xi}(\xi)
=\pm\sigma_i\left(\frac{2i-n-1}{n-1}\right)\bigg/
 \sqrt{1-\left(\frac{2i-n-1}{n-1}\xi\right)^2}
\]
and
\[
\frac{\partial f_i}{\partial b_1}(v^\sigma(\xi);b_1)
=\mp\sigma_i\left(\frac{2i-n-1}{n-1}\xi\right)
\]
for $i\in[n]$ when $b_1=pK/\bar{\chi}^\sigma(\xi)$,
 where $f_i(v;b_1)$ is the $i$th element of $f(v;b_1)$
 and the upper and lower signs, respectively,
 correspond to the cases $pK\chi^\sigma(\xi)+b_1>0$ and $pK\chi^\sigma(\xi)+b_1<0$.

\begin{figure}[t]
\includegraphics[scale=0.8]{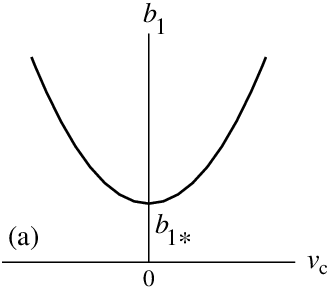}\qquad
\includegraphics[scale=0.8]{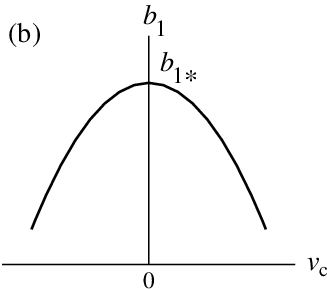}
\caption{Bifurcation diagrams of equilibria in \eqref{eqn:lem3b1}:
(a) $c_1/c_2<0$; (b) $c_1/c_2>0$.
\label{fig:3b}}
\end{figure}

Let $\xi=\xi_*$ be a zero of $(\d\bar{\chi}^\sigma/\d\xi)(\xi)$
 and assume that $b_{1*}=pK/\bar{\chi}^\sigma(\xi_*)>0$ is bounded.
Then $\D_vf(v^\sigma(\xi_*);b_{1*})$ has a simple zero eigenvalue
 and $(\d v^\sigma/\d\xi)(\xi_*)$ is its eigenvector.
There exists a one-dimensional center manifold for $v^\sigma(\xi_*)$,
 on which the system \eqref{eqn:dsys1} takes the form \eqref{eqn:lem3b1}.
See the proof of Lemma~\ref{lem:3b}.
In addition, 
 $c_1/c_2$ is negative (resp. positive)
 when $\bar{\chi}^\sigma(\xi)$ has  a local maximum (resp. a local minimum) at $\xi=\xi_*$,
 since by Theorem~\ref{thm:3b}(i)
 a supercritical (resp. subcritical) saddle-node bifurcation occurs.
See Fig.~\ref{fig:3b}.

Moreover, substituting $v=(\d v^\sigma/\d\xi)(\xi_*)v_\c+v^\sigma(\xi_*)$
 into \eqref{eqn:dsys1}, we take the inner product of the resulting equation
 with $(\d v^\sigma/\d\xi)(\xi_*)$ and obtain
\[
c_2=\frac{\d v^\sigma}{\d\xi}(\xi_*)
 \cdot\frac{\partial f}{\partial b_1}(v^\sigma(\xi_*);b_{1*})\bigg/
 \biggl| \frac{\d v^\sigma}{\d\xi}(\xi_*)\biggr|^2<0
\]
where the dot `$\cdot$' represents the standard inner product, since
\begin{align*}
&\frac{\d v^\sigma}{\d\xi}(\xi_*)
 \cdot\frac{\partial f}{\partial b_1}(v^\sigma(\xi_*);b_{1*})\\
&=-\sum_{i=1}^{n}\left(\frac{2i-n-1}{n-1}\right)^2\xi_*\bigg/
 \sqrt{1-\left(\frac{2i-n-1}{n-1}\xi_*\right)^2}<0.
\end{align*}
We also have
\[
v_\c=\frac{\d v^\sigma}{\d\xi}(\xi_*)\cdot(v-v^\sigma(\xi_*))
 \bigg/\left|\frac{\d v^\sigma}{\d\xi}(\xi_*)\right|^2,
\]
so that $v_\c$ is negative or positive for $v=v^\sigma(\xi)$ near $\xi=\xi_*$,
 depending on whether $\xi$ is less or greater than $\xi_*$.
Hence, we see via \eqref{eqn:lem3b1} that
 a negative (resp. positive) eigenvalue of $\D_v f(v^\sigma(\xi);b_1)$
 with $b_1=pK/\bar{\chi}^\sigma(\xi)$
 becomes positive (resp. negative) 
 when $\bar{\chi}^\sigma(\xi)$ has a local maximum (resp. a local minimum) at $\xi=\xi_*$,
 i.e., $c_1>0$ (resp. $c_1<0$).
See also Fig.~\ref{fig:3b}.

Fix $\sigma\in\Sigma_n$ and let $\xi$ vary from $0$ to $1$.
We see that $\bar{\chi}^\sigma(\xi)$ must take a local maximum before taking a local minimum 
 since it does not become zero on $(0,1]$ by \eqref{eqn:prop3a'}.
Hence,  if $\sigma_i=-1$ for some $i\in[n]$,
 then by Lemma~\ref{lem:3c} $v^\sigma(\xi)$ is unstable near $\xi=0$
 and $v^\sigma(\xi)$ continues to be unstable when changing $\xi$ from $0$ to $1$,
 since the number of positive eigenvalues of $\D_v f(v^\sigma(\xi);b_1)$ does not vanish.
Thus, we complete the proof of Theorem~\ref{thm:3c}.
\end{proof}

\begin{rmk}
\label{rmk:3d}
From the proof of Theorem~{\rm\ref{thm:3c}} we see that
 the equilibrium $v^\sigma$ with $\sigma_i=-1$, $i\in[n]$, is stable
 for $b_1<0$ with $|b_1|\gg 1$.
\end{rmk}

By the relation \eqref{eqn:vi} we have
\begin{equation}
u_i^n = v_i^{\sigma}+V(t)
\label{eqn:dsol0}
\end{equation}
as a synchronized solution in the CKM \eqref{eqn:dsys} for each $\sigma\in\Sigma_n$.
In particular, when $\sigma_i=1$, $i\in[n]$, we have 
\[
u_i^n=\arcsin\left(\frac{2i-n-1}{n-1}\xi\right)+V(t),\quad 
\xi=\frac{(n-1)\nu}{pKC_\D^{\sigma}+b_1},
\]
which coincides with \eqref{eqn:dsol}.
The synchronized solution \eqref{eqn:dsol0}
 suffers such bifurcations as detected in Theorem~\ref{thm:3b}
 and has the same stability type as the equilibrium $v^\sigma$
 in \eqref{eqn:dsys1}, which is determined in Theorem~\ref{thm:3c}.
In particular, it is stable for $\xi<\xi_*$ and unstable for $\xi>\xi_*$
 if $\sigma_i=1$ for $i\in[n]$,
 and always unstable if $\sigma_i=-1$ for some $i\in[n]$,
 under the hypotheses of Theorem~\ref{thm:3c}.

\section{Continuum Limits}
In Section~3, we obtained a complete description of equilibria
 in the CKM \eqref{eqn:dsys} with the natural frequencies \eqref{eqn:omegai}
 on complete graphs, including their bifurcation structure and stability.
In this section, based on the fundamental results in Section 2 and the characterization of equilibria and their stability,
 we study continuous and discontinuous solutions and their stability
 in the CL \eqref{eqn:csys} with the frequency function \eqref{eqn:omega}.
A similar approach was utilized in \cite{Y24a}.

The CL \eqref{eqn:csys} has two synchronized continuous solutions
 $u=U(x)+V(t)$ and $u=\pi-U(x)+V(t)$ in \eqref{eqn:csol}
 for which the function $U(x)$ is written as
\[
U(x)=\arcsin\left(\frac{a(x-\tfrac{1}{2})}{pKC+b_1}\right),
\]
where  the constant $C>0$ or $C<0$ satisfies
\begin{align}
C=&\frac{pKC+b_1}{a}\Biggl(\arcsin\left(\frac{a}{2(pKC+b_1)}\right)\notag\\
& +\frac{a}{2(pKC+b_1)}\sqrt{1-\left(\frac{a}{2(pKC+b_1)}\right)^2}\Biggr),
\label{eqn:C1}
\end{align}
which follows from \eqref{eqn:C}.

\begin{prop}
\label{prop:4a}
Let $pK/a<2/\pi$.
The two continuous solutions $u=U(x)+V(t)$ and $u=\pi-U(x)+V(t)$ in \eqref{eqn:csol}
 exist for the CL \eqref{eqn:csys} if
\begin{equation}
b_1\ge\tfrac{1}{2}a-\tfrac{1}{4}\pi pK\quad\mbox{and}\quad
b_1\ge\tfrac{1}{2}a+\tfrac{1}{4}\pi pK,
\label{eqn:prop4a}
\end{equation}
respectively.
\end{prop}

\begin{proof}
Let $pK/a<2/\pi$.
We only have to show that
 Eq.~\eqref{eqn:C1} has positive and negative solutions if Eq.~\eqref{eqn:prop4a} holds.
Letting
\[
\varphi(\eta)=\frac{\beta_0-\left(\arcsin\eta+\eta\sqrt{1-\eta^2}\right)}{\eta},\quad
\beta_0=\frac{a}{pK},
\]
we rewrite \eqref{eqn:C1} as
\[
b_1=\pm\tfrac{1}{2}pK\varphi\left(\frac{a}{2(pKC+b_1)}\right)
\]
where the upper and lower signs, respectively,
 correspond to the cases $C>0$ and $C<0$. 
In addition,
 $\varphi(\eta)$ is monotonically decreasing on $(-1,0)$ and $(0,1)$, and
\begin{align*}
&
\lim_{\eta\to+0}\varphi(\eta)=+\infty,\quad
\varphi(1)=\beta_0-\tfrac{1}{2}\pi,\\
&
\lim_{\eta\to-0}\varphi(\eta)=-\infty,\quad
\varphi(-1)=-\beta_0-\tfrac{1}{2}\pi.
\end{align*}
Hence, Eq.~\eqref{eqn:C1} has solutions $C>0$ and $C<0$, respectively, if
\[
b_1\ge\tfrac{1}{2}pK\left(\beta_0-\tfrac{1}{2}\pi\right)\quad\mbox{and}\quad
b_1\ge\tfrac{1}{2}pK\left(\beta_0+\tfrac{1}{2}\pi\right),
\]
which are equivalent to \eqref{eqn:prop4a}.
Thus, we obtain the desired result.
\end{proof}

\begin{rmk}
\label{rmk:4a}
As shown in Section~$7$ of {\rm\cite{Y24a}}, if $pK/a<2/\pi$,
 then such a continuous solution as $u=U(x)+V(t)$ and $u=\pi-U(x)+V(t)$
 does not exist in the CL \eqref{eqn:csys} with $b_1,b_0=0$,
 i.e., in the CL \eqref{eqn:csys0}.
\end{rmk}

Let $m_\pm$ be nonnegative integers that may be infinite,
 and let $\hat{I}_j^{\pm}\subset[0,1]$, $j\in[m_\pm]$, be intervals
 such that $\hat{I}_j^-\in[0,\tfrac{1}{2}]$, $\hat{I}_j^+\in[\tfrac{1}{2},1]$,
 $\hat{I}_j^-\cap \hat{I}_k^-,\hat{I}_j^+\cap \hat{I}_k^+=\emptyset$ and 
\[
\hat{I}=\bigcup_{j=1}^{m_-}\hat{I}_j^-\cup\bigcup_{j=1}^{m_+}\hat{I}_j^+\neq\emptyset,
\]
where the interiors of $\hat{I}_j^\pm$, $j\in[m_\pm]$, may be empty. 
The CL \eqref{eqn:csys} has discontinuous solutions
\begin{equation}
u(t,x)=\begin{cases}
U(x)+V(t)
 & \mbox{for $x\in[0,1]\setminus\hat{I}$;}\\
\pi-U(x)+V(t) & \mbox{for $x\in\hat{I}_j^+$, $j\in[m_+]$;}\\
-U(x)-\pi+V(t) & \mbox{for $x\in\hat{I}_j^-$, $j\in[m_-]$,}
\end{cases}
\label{eqn:csol3}
\end{equation}
where the constant $C$ in $U(x)$ satisfies
\[
C=\int_{[0,1]\setminus\hat{I}}\sqrt{1-\left(\frac{a(x-\tfrac{1}{2})}{pKC+b_1}\right)^2}\d x
 -\int_{\hat{I}}\sqrt{1-\left(\frac{a(x-\tfrac{1}{2})}{pKC+b_1}\right)^2}\d x.
\]
We see that the synchronized solution given by \eqref{eqn:dsyssol}
 converges to \eqref{eqn:csol3} as $n\to\infty$ when
\begin{equation}
\sigma_i=\begin{cases}
1 & \mbox{if $i/n\in[0,1]\setminus\hat{I}$;}\\
-1 & \mbox{if $i/n\in\hat{I}$.}
\end{cases}
\label{eqn:sigma1}
\end{equation}
Using Theorems~\ref{thm:2c} and \ref{thm:2d}, we prove the following

\begin{thm}
\label{thm:4a}
Let $pK/a<2/\pi$ and $b_1>0$.
Then the following hold$:$
\begin{enumerate}
\setlength{\leftskip}{-1.5em}
\item[(i)]
The continuous solution $u=U(x)+V(t)$ is asymptotically stable
 while the continuous solution $u=\pi-U(x)+V(t)$ is unstable
 when condition \eqref{eqn:prop4a} holds$;$
\item[(ii)]
The discontinuous solution \eqref{eqn:csol3} with $\hat{I}\neq\emptyset$ is unstable
 if it exists and is different from $u=U(x)+V(t)$ in the sense of $L^2(I)$.
\end{enumerate}
\end{thm}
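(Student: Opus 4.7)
The plan is to reduce the stability questions in the CL \eqref{eqn:csys} to the CKM analysis of Section~3 and then transfer the conclusions via the abstract correspondence results of Section~2. The first step is to check that the standing hypothesis $pK/a<2/\pi$ forces condition \eqref{eqn:prop3a} with $\sigma_i=1$ for all $i\in[n]$ and every $n$ sufficiently large. For this I will use the Riemann-sum limit
\[
\lim_{n\to\infty}\,\xi\chi^\sigma(\xi)=\tfrac{1}{2}\bigl(\xi\sqrt{1-\xi^2}+\arcsin\xi\bigr),
\]
whose maximum on $[0,1]$ equals $\pi/4$, together with $\beta=(n-1)\nu/(pK)\to a/(2pK)>\pi/4$. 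Hence Theorems~\ref{thm:3b} and \ref{thm:3c} are available for all large $n$.

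Next, I identify each continuous or discontinuous solution of the CL with a family of discrete equilibria $v^\sigma$. Evaluating \eqref{eqn:csol1} at $x=i/n$ and using \eqref{eqn:vi}--\eqref{eqn:vis}, \eqref{eqn:csol1} is the $n\to\infty$ limit of $v^\sigma$ with $\sigma\equiv 1$; the solution \eqref{eqn:csol2} corresponds to $\sigma\equiv-1$; and \eqref{eqn:csol3} to the sign pattern \eqref{eqn:sigma1}. A direct Riemann-sum comparison of \eqref{eqn:C1} with \eqref{eqn:CDj} yields $C_\D^\sigma\to C$ in every case, and therefore the $L^2$ convergence \eqref{eqn:thm2c} required by Theorems~\ref{thm:2c}--\ref{thm:2e} is satisfied.

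For part~(i), asymptotic stability of \eqref{eqn:csol1} reduces to showing that the discrete equilibria $v^\sigma$ with $\sigma\equiv 1$ are asymptotically stable uniformly in $n$ and then appealing to Theorem~\ref{thm:2c}(i). Condition \eqref{eqn:prop4a} places the relevant $\xi$ strictly below the saddle-node threshold $\xi_0$ identified in Theorem~\ref{thm:3b}(i), so Theorem~\ref{thm:3c}(i) supplies discrete asymptotic stability for each large $n$. Instability of \eqref{eqn:csol2} will follow from Theorem~\ref{thm:3c}(ii) (since $\sigma\equiv-1$ has some $\sigma_i=-1$) combined with Theorem~\ref{thm:2d}(i): the only stable discrete equilibrium in this parameter regime has $\sigma\equiv 1$ and converges to \eqref{eqn:csol1}, so no stable discrete equilibrium converges to \eqref{eqn:csol2}. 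Part~(ii) is analogous: distinctness in $L^2(I)$ forces $\hat{I}$ to have positive Lebesgue measure, so by \eqref{eqn:sigma1} there are indices with $\sigma_i=-1$ for every large $n$; instability then follows from Theorem~\ref{thm:3c}(ii) and the same ruling-out argument via Theorem~\ref{thm:2d}(i).

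The main obstacle I expect is upgrading the pointwise discrete asymptotic stability supplied by Theorem~\ref{thm:3c}(i) to the uniform-in-$n$ form required by the hypothesis of Theorem~\ref{thm:2c}(i). The symmetric Jacobian \eqref{eqn:jmat1} at $v^\sigma$ with $\sigma\equiv 1$ has a Riemann-sum structure, and its spectrum should converge to that of the linearization of the CL \eqref{eqn:csys} at \eqref{eqn:csol1}, which is strictly negative away from the saddle-node point. Making this spectral comparison rigorous and extracting a uniform spectral gap on the stable branch $\xi<\xi_0$ is where the bulk of the technical work will lie; once this is done, the remaining steps are routine combinations of Riemann-sum convergence with the transfer theorems of Section~2.
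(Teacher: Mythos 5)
Your overall route is the same as the paper's: identify \eqref{eqn:csol1}, \eqref{eqn:csol2} and \eqref{eqn:csol3} as $L^2(I)$-limits of the discrete equilibria $v^\sigma$ with $\sigma\equiv1$, $\sigma\equiv-1$ and \eqref{eqn:sigma1}, respectively, apply Theorem~\ref{thm:3c} to obtain their (in)stability in the CKM, and transfer to the CL via Theorem~\ref{thm:2c}(i) and Theorem~\ref{thm:2d}(i). Your preliminary check that $pK/a<2/\pi$ yields condition \eqref{eqn:prop3a} for all large $n$ (via $\max_{\xi\in[0,1]}\xi\chi^\sigma(\xi)\to\pi/4<\beta$) is correct and is exactly what makes Theorems~\ref{thm:3b} and \ref{thm:3c} available.

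The one genuine gap is in part (i): you assert that condition \eqref{eqn:prop4a} ``places the relevant $\xi$ strictly below the saddle-node threshold $\xi_0$,'' but $\xi_0=\xi_0(n)$ depends on $n$, and for fixed $b_1$ the equation \eqref{eqn:con} with $\sigma\equiv1$ has two roots $\xi_-<\xi_0(n)<\xi_+$, of which only the first gives a stable equilibrium. One must show that it is this stable root that converges to the value $\xi^*\in(0,1)$ determined by the CL relation \eqref{eqn:C1}, i.e., that $\xi^*<\xi_0(n)$ for all large $n$. The paper settles this with the one substantive computation in its proof: $\bar{\chi}^\sigma(\xi)\to\bar{\chi}_0(\xi)=2/\varphi(\xi)$, which is monotonically increasing on $(0,1)$, whence $\xi_0(n)\to1$ while $\xi^*<1$ is fixed. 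Without this (or an equivalent) argument your key claim is unsupported. Conversely, the uniformity-in-$n$ issue you single out as the ``main obstacle'' for invoking Theorem~\ref{thm:2c}(i) is a legitimate concern, but the paper does not address it either---it simply cites Theorem~\ref{thm:2c}(i)---so your proposed spectral-gap comparison would, if carried out, be more careful than the published proof on that point. Your treatment of \eqref{eqn:csol2} and of part (ii) via Theorem~\ref{thm:3c}(ii) and Theorem~\ref{thm:2d}(i), including the observation that $L^2$-distinctness forces $\hat{I}$ to have positive measure, coincides with the paper's.
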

 
\begin{proof}
The synchronized solution \eqref{eqn:dsol0}
 with $\sigma_i=1$ (resp. $\sigma_i=-1$), $i\in[n]$,
 which is asymptotically stable for $\xi<\xi_0$
 (resp. unstable for $\xi\in(0,1)$) in the CKM \eqref{eqn:dsys} by Theorem~\ref{thm:3c},
 converges to the continuous solution $u=U(x)+V(t)$
 (resp. $u=\pi-U(x)+V(t)$) as $n\to\infty$.
Note that by Theorem~\ref{thm:3b}(i)
 $\bar{\chi}^\sigma(\xi)$ has a unique extremum, which is a local maximum, at $\xi=\xi_0$.

Let $\sigma_i=1$, $i\in[n]$. 
Since
\[
\chi^\sigma(\xi)\to 2\int_0^1\sqrt{1-\xi^2x^2}\,\d x
=\frac{\xi\sqrt{1-\xi^2}+\arcsin\xi}{2\xi},
\]
we have
\[
\bar{\chi}^\sigma(\xi)=\frac{\xi}{\beta-\xi\chi^\sigma(\xi)}
\to\frac{2\xi}{\beta_0-(\xi\sqrt{1-\xi^2}+\arcsin\xi)}=:\bar{\chi}_0(\xi).
\]
Noting that $\bar{\chi}_0(\xi)=2/\varphi(\xi)$,
we see that $\bar{\chi}_0(\xi)$ is monotonically increasing on $(0,1)$.
This implies that $\xi_0\to 1$ as $n\to\infty$.
Part~(i)  follows from Theorem~\ref{thm:2c}(i).

We now prove part~(ii).
The synchronized solution \eqref{eqn:dsol0} with \eqref{eqn:sigma1},
 which is unstable by Theorem~\ref{thm:3c}(ii),
 converges to the discontinuous solution \eqref{eqn:csol3} in $L^2(I)$ as $n\to\infty$.
Hence, by Theorem~\ref{thm:2d},
 if it is different from $u=U(x)+V(t)$ in the sense of $L^2(I)$,
 then the discontinuous solution \eqref{eqn:csol3} with $\hat{I}\neq\emptyset$
 is unstable.
\end{proof}

\begin{rmk}
As seen from the proof of Theorem~$\ref{thm:4a}$,
 condition~$\eqref{eqn:prop3a}$ becomes \eqref{eqn:prop4a} as $n\to\infty$.
\end{rmk}

From Theorems~\ref{thm:3b} and \ref{thm:3c}
 we obtain the following results for the CKM \eqref{eqn:dsys} on complete graphs:
\begin{itemize}
\setlength{\leftskip}{-2em}
\item
The synchronized solution \eqref{eqn:dsyssol} for $v^\sigma$ with $\sigma_i=1$, $i\in[n]$,
 undergoes a saddle-node bifurcation at $\xi=\xi_0$, i.e., $b_1=pK/\bar{\chi}^\sigma(\xi_0)$,
 and it changes from stable to unstable there
\item
In addition, it undergoes a pitchfork bifurcation at $\xi=1$, i.e., $b_1=pK/\bar{\chi}^\sigma(1)$,
 where it changes to a synchronized solution
 with $\sigma_1=\sigma_n=-1$ and $\sigma_j=1$, $j\in[n-1]\setminus\{1\}$,
 while the two synchronized solutions
 with $\sigma_1=-1$, $\sigma_{n}=1$ or $\sigma_1=1$, $\sigma_{n}=-1$
 and $\sigma_j=1$, $j\in[n-1]\setminus\{1\}$, are born.
\end{itemize}
As shown in the proof of Theorem~\ref{thm:4a},
 we have $\xi_0\to1$ as $n\to\infty$. 
Hence, in this limit, the saddle-node and pitchfork bifurcations coalesce.
By Theorem~\ref{thm:4a},
 these four solutions converge to a single asymptotically stable solution in the sense of $L^2(I)$
 in the CL \eqref{eqn:csys}.
More generally,
if $\sigma_i=1$ for all but a fixed finite positive number of indices $i\in[n]$,
 then by Theorem~\ref{thm:4a}(ii)
 the synchronized solution \eqref{eqn:dsyssol} for $v^\sigma$ is unstable,
 but it converges in $L^2(I)$
 to the asymptotically stable solution $u=U(x)+V(t)$ as $n\to\infty$.
Thus, the bifurcation behavior of the CL \eqref{eqn:csys}
 is more subtle than that of finite-dimensional dynamical systems
 such as the CKM \eqref{eqn:dsys}.
Such behavior was previously detected
 for the KM \eqref{eqn:dsys0} on complete simple graphs in \cite{Y24a}.

\section{Controlled Kuramoto Model on Uniform Random Graphs}

We turn to the CKM \eqref{eqn:dsys} with the natural frequencies \eqref{eqn:omegai}
 on uniform random dense and sparse graphs
 given by \eqref{eqn:rdg} and \eqref{eqn:rsg}, respectively.
In contrast to the case of complete graphs,
 the randomness of the coupling makes a direct analytical characterization of equilibria and their stability
 in the CKM \eqref{eqn:dsys} highly difficult.
However, the relationships between the CKM \eqref{eqn:dsys} and CL \eqref{eqn:csys} 
 stated in Section~2 allow us to describe fundamental dynamical behavior
 in the former, by using the results of Section~4 for the latter.

Let $\bar{\mathbf{u}}(t)$ (resp. $\hat{\mathbf{u}}(t)$) denote solutions $u=U(x)+V(t)$
 (resp. $u=\pi-U(x)+V(t)$ or \eqref{eqn:csol3}) to the CL \eqref{eqn:csys},
 such that $\bar{\mathbf{u}}(t)$ and $\hat{\mathbf{u}}(t)$ are different in $L^2(I)$.
Using Corollary~\ref{cor:2a} and Theorems~\ref{thm:2e} and \ref{thm:4a},
 we prove the following.

\begin{thm}\
\label{thm:5a}
\begin{enumerate}
\setlength{\leftskip}{-1.5em}
\item[(i)]
For any $\epsilon,\tau>0$ there exists $\delta,N>0$ such that for any $n>N$
 if $\mathbf{u}_n(t)$ is any solution to the CKM \eqref{eqn:dsys} satisfying
\[
\|\mathbf{u}_n(0)-\bar{\mathbf{u}}(0)\|<\delta,
\]
then
\[
\max_{t\in[0,\tau]}\|\mathbf{u}_n(t)-\bar{\mathbf{u}}(t)\|<\epsilon
\quad\mbox{a.s.}
\]
Moreover,
\[
\lim_{t\to\infty}\lim_{n\to\infty}\|\mathbf{u}_n(t)-\bar{\mathbf{u}}(t)\|
 =0\quad\mbox{a.s.}
\]
\item[\rm(ii)]
For any $\epsilon,\delta>0$
 there exist $\tau,N>0$ such that for $n>N$
a solution $\mathbf{u}_n(t)$ to the CKM \eqref{eqn:dsys} can be chosen so that
\[
\|\mathbf{u}_n(0)-\hat{\mathbf{u}}(0)\|<\delta
\]
and
\[
\|\mathbf{u}_n(\tau)-\hat{\mathbf{u}}(\tau)\|>\epsilon\quad\mbox{a.s.}
\]
\end{enumerate}
\end{thm}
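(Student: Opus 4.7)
The plan is to derive both parts from the Section~2 machinery combined with the CL stability analysis of Theorem~\ref{thm:4a}. The CL \eqref{eqn:csys} is the same in the complete, random dense, and random sparse regimes, and the a.s.\ continuum approximation of Theorem~\ref{thm:2b} is regime-independent, so the abstract results from Section~2 apply uniformly throughout.

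Part~(i) is essentially a direct invocation. Under the hypotheses $pK/a<2/\pi$ and $b_1>0$ (together with the implicit existence condition \eqref{eqn:prop4a}), Theorem~\ref{thm:4a}(i) asserts that the continuous solution $\bar{\mathbf{u}}(t)$ defined by \eqref{eqn:csol1} is asymptotically stable in the CL \eqref{eqn:csys}. The two conclusions of part~(i), namely uniform closeness on $[0,\tau]$ and the iterated-limit convergence as $t,n\to\infty$, are then precisely the two assertions of Corollary~\ref{cor:2a}, and no further work is needed beyond noting that the assumed initial proximity $\|\mathbf{u}_n(0)-\bar{\mathbf{u}}(0)\|<\delta$ forces the a.s.\ hypothesis of that corollary.

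For part~(ii), my approach is to couple part~(i) with a triangle inequality that exploits the $L^2$ separation between $\bar{\mathbf{u}}$ and $\hat{\mathbf{u}}$. Each candidate $\hat{\mathbf{u}}(t)$ in \eqref{eqn:csol2}--\eqref{eqn:csol3} has the form $\hat{U}(x)+V(t)$ with $\hat{U}\in L^2(I)$, so that the difference $\bar{\mathbf{u}}(t)-\hat{\mathbf{u}}(t)=U(x)-\hat{U}(x)$ is independent of $t$; the hypothesis that the two solutions differ in $L^2(I)$ then yields a constant $\eta>0$ with $\|\bar{\mathbf{u}}(t)-\hat{\mathbf{u}}(t)\|=\eta$ for all $t\ge 0$. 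Given $\delta>0$ and any $\epsilon\in(0,\eta)$, I would apply part~(i) with tolerance $\eta-\epsilon$ to produce $\tau>0$ and $N>0$ such that $\|\mathbf{u}_n(\tau)-\bar{\mathbf{u}}(\tau)\|<\eta-\epsilon$ almost surely for $n>N$; the triangle inequality $\|\mathbf{u}_n(\tau)-\hat{\mathbf{u}}(\tau)\|\ge\eta-\|\mathbf{u}_n(\tau)-\bar{\mathbf{u}}(\tau)\|>\epsilon$ then closes the argument.

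The step I expect to be most delicate is extracting a usable pair $(\tau,N)$ from the iterated double limit $\lim_{t\to\infty}\lim_{n\to\infty}\|\mathbf{u}_n(t)-\bar{\mathbf{u}}(t)\|=0$ supplied by Corollary~\ref{cor:2a}: concretely, one first fixes $t$ large enough that the inner $n\to\infty$ limit lies below half of $\eta-\epsilon$, and then enlarges $N$ so that Theorem~\ref{thm:2b}'s a.s.\ convergence, combined with the a.s.\ clause of Corollary~\ref{cor:2a}, upgrades this to a uniform estimate for all $n>N$ off a single null event. A subsidiary point is verifying that $U(x)-\hat{U}(x)$ still has positive $L^2$ norm after the natural identification of $u$ modulo $2\pi$, but this is precisely encoded in the ``different in the sense of $L^2(I)$'' hypothesis inherited from Theorem~\ref{thm:4a}(ii).
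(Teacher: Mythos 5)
Your part~(i) coincides with the paper's argument: Theorem~\ref{thm:4a}(i) gives asymptotic stability of $\bar{\mathbf{u}}(t)$ in the CL, and both conclusions are then read off from Corollary~\ref{cor:2a}. That is exactly what the paper does.

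Part~(ii) is where you depart from the paper, and I think the departure is a genuine problem rather than an alternative route. The paper's proof is one line: $\hat{\mathbf{u}}(t)$ is \emph{unstable} in the CL by Theorem~\ref{thm:4a}, and part~(ii) is then a direct instance of Theorem~\ref{thm:2e} applied to $\hat{\mathbf{u}}$. Note that the hypothesis of Theorem~\ref{thm:2e} is that the solution starts $\delta$-close to the \emph{unstable} solution itself; the ``$\|\mathbf{u}_n(0)-\bar{\mathbf{u}}(0)\|<\delta$'' in the displayed statement of part~(ii) (together with the dangling ``Let'') is an editing artifact, and the intended hypothesis is $\|\mathbf{u}_n(0)-\hat{\mathbf{u}}(0)\|<\delta$. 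This matters because the whole point of part~(ii), as the paper says immediately after the theorem, is that $\hat{\mathbf{u}}$ ``behaves as if it is an unstable solution'' in the CKM: solutions launched arbitrarily close to $\hat{\mathbf{u}}$ are eventually expelled. Your triangle-inequality argument never uses the instability of $\hat{\mathbf{u}}$ at all; it only shows that solutions which start near the \emph{other} solution $\bar{\mathbf{u}}$ remain bounded away from $\hat{\mathbf{u}}$, which is an essentially trivial corollary of part~(i) and carries no information about the dynamics near $\hat{\mathbf{u}}$. Under the intended reading your argument cannot be repaired by the same device, since a solution starting near $\hat{\mathbf{u}}(0)$ is not near $\bar{\mathbf{u}}(0)$ and part~(i) gives no control over it.

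Even taken on its own terms (i.e., accepting the literal initial condition near $\bar{\mathbf{u}}(0)$), your argument has two quantifier gaps. First, it only produces the conclusion for $\epsilon<\eta:=\|U-\hat{U}\|_{L^2}$, whereas the statement quantifies over all $\epsilon>0$. Second, part~(i) supplies its \emph{own} radius $\delta'$ for the initial data, while part~(ii) hands you an arbitrary $\delta$, possibly larger than $\delta'$; for solutions with $\delta'\le\|\mathbf{u}_n(0)-\bar{\mathbf{u}}(0)\|<\delta$ your appeal to part~(i) gives nothing. The paper avoids both issues by not routing part~(ii) through part~(i) at all.
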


\begin{proof}
By Theorem~\ref{thm:4a},
 $\bar{\mathbf{u}}(t)$ and $\hat{\mathbf{u}}(t)$ are, respectively,
 asymptotically stable and unstable solutions in the CL \eqref{eqn:csys}.
Hence, parts~(i) and (ii) immediately  follow
 from Corollary~\ref{cor:2a} and Theorem~\ref{thm:2e}, respectively.
\end{proof}

\begin{rmk}
For Kuramoto-type models on complete simple or general graphs,
 mean-field limits have been extensively studied,
 leading to kinetic equations for the probability density of oscillator phases
{\rm \cite{ABVRS05,C15,CM19a,CM19b}}.
These approaches describe the evolution of phase distributions
 rather than the time histories of individual phases,
 which are random processes when the natural frequencies
 or the underlying graph $G_n$ is random.
Moreover, in many concrete mean-field analyses such as {\rm\cite{C15,CM19a,CM19b}},
 the natural frequencies are modeled as random variables
 with a prescribed probability distribution,
 typically satisfying a unimodality assumption.
 
In contrast, we consider deterministic, uniformly spaced natural frequencies,
 for which synchronized solutions of the CKM \eqref{eqn:dsys}
 and their bifurcations can be analyzed directly.
The CL \eqref{eqn:csys} adopted here
 describes the almost sure convergence of solutions of the CKM \eqref{eqn:dsys}
 to those of \eqref{eqn:csys} as $n\to\infty$.
From the mean-field perspective,
 such limiting states correspond to singular measures,
 typically delta distributions,
 and capturing their stability and bifurcation structure
 within the kinetic framework is highly nontrivial.
\end{rmk}

Thus, the solution $u=U(x)+V(t)$ behaves 
 (resp. the solutions $u=\pi-U(x)+V(t)$ and \eqref{eqn:csol3} behave)
 as if it is an asymptotically stable one (resp. they are unstable ones)
 in the CKM \eqref{eqn:dsys} on uniform random dense and sparse graphs.
Similar behavior was previously numerically observed or theoretically detected
 for the KMs defined on some random dense and sparse graphs and their CLs 
 including the KM \eqref{eqn:dsys0} and CL \eqref{eqn:csys0}
 when the natural frequencies are deterministic or random,
 in \cite{IY23,Y24b,Y24c,Y24d,Y24e}.

\section{Numerical Simulations}

\begin{figure}
\centering
\includegraphics[width=0.48\textwidth]{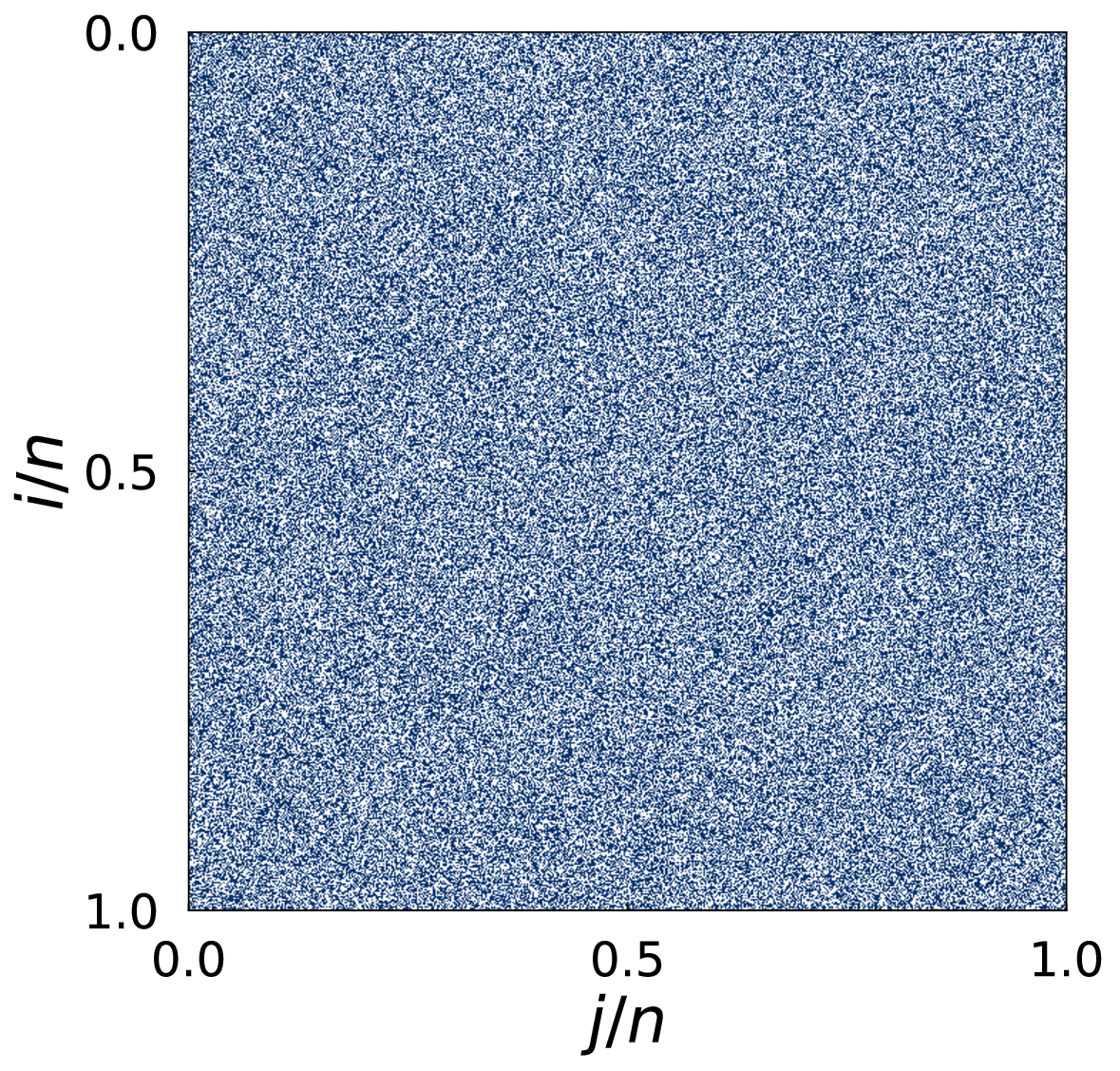}
\includegraphics[width=0.48\textwidth]{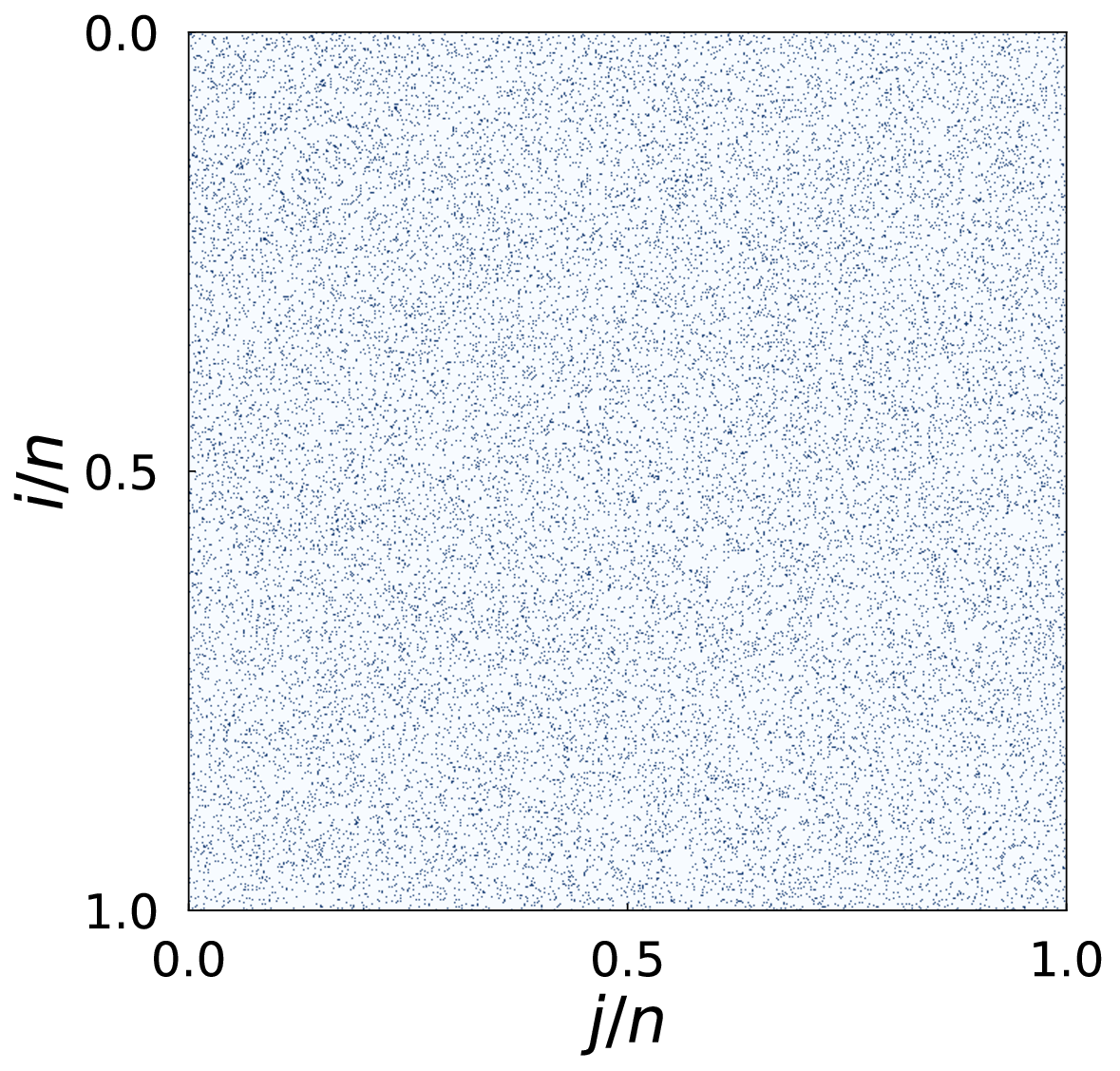}
\caption{
Pixel pictures of sampled weighted matrices 
 for the random undirected graphs given by $w_{ij}^n=1$, $i,j\in[n]$ 
 with probability \eqref{eqn:Pdg} and \eqref{eqn:Psg} for $n=1000$:
(a) Dense graph with $p=0.5$;
(b) Sparse graph with $p=0.5$ and $\gamma=0.3$.
Blue pixels correspond to entries with $w_{ij}=1$,
whereas light blue pixels correspond to the remaining entries.
\label{fig:5a}}
\end{figure}

We finally present numerical simulation results for the CKM \eqref{eqn:dsys}.
We treat the following three cases for the graph $G_n$:
\begin{itemize}
\setlength{\leftskip}{-1.3em}
\item[(i)]
Complete simple graph, which is deterministic dense with $p=1$;
\item[(ii)]
Random undirected dense graph in which $w_{ij}^n=1$ occurs with probability
\begin{equation}
\Pset (i\sim j) =p,\quad i,j\in[n];
\label{eqn:Pdg}
\end{equation}
\item[(iii)]
Random undirected sparse graph in which $w_{ij}^n=1$ occurs  with probability
\begin{equation}
\Pset (i\sim j) =n^{-\gamma}p,\quad i,j\in[n],
\label{eqn:Psg}
\end{equation}
where $\gamma\in(0,\tfrac{1}{2})$.
Recall that $\alpha_n=n^{-\gamma}$
 and note that $\alpha_n^{-1}=n^\gamma>1$ for any $n>1$.
\end{itemize}
We choose $p=0.5$ for case~(ii), and $p=0.5$, $\gamma=0.3$ for case~(iii).
In Figs.~\ref{fig:5a}(a) and (b), we display numerical samples
 of the weight matrices for the random undirected dense and sparse graphs
 in cases~(ii) and (iii), respectively,
 for $n=1000$, $p=0.5$ and $\gamma=0.3$.

We performed numerical simulations for the CKM \eqref{eqn:dsys}
 on the three types of graphs with $n=1000$, $V_1=b_0=1$ and $V_0=1$
 using the DOP853 solver \cite{HNW93}.
We also chose $K=0.5$ and $a=1$ for case~(i),
 and $K=0.5$ and $a=0.5$ for cases~(ii) and (iii).
The continuous solution $u=U(x)+V(t)$
 does not exist in the CL \eqref{eqn:csys} with $b_1,b_0=0$, in cases~(i)-(iii)
 since $pK/a=0.5<2/\pi=0.63661\ldots$ (see Remark~\ref{rmk:4a}).
The initial values $u_i^n(0)$, $i\in[n]$ were independently randomly chosen
according to the uniform distribution on $[-\pi, \pi]$.

\begin{figure}[t]
\centering
\includegraphics[width=0.47\textwidth]{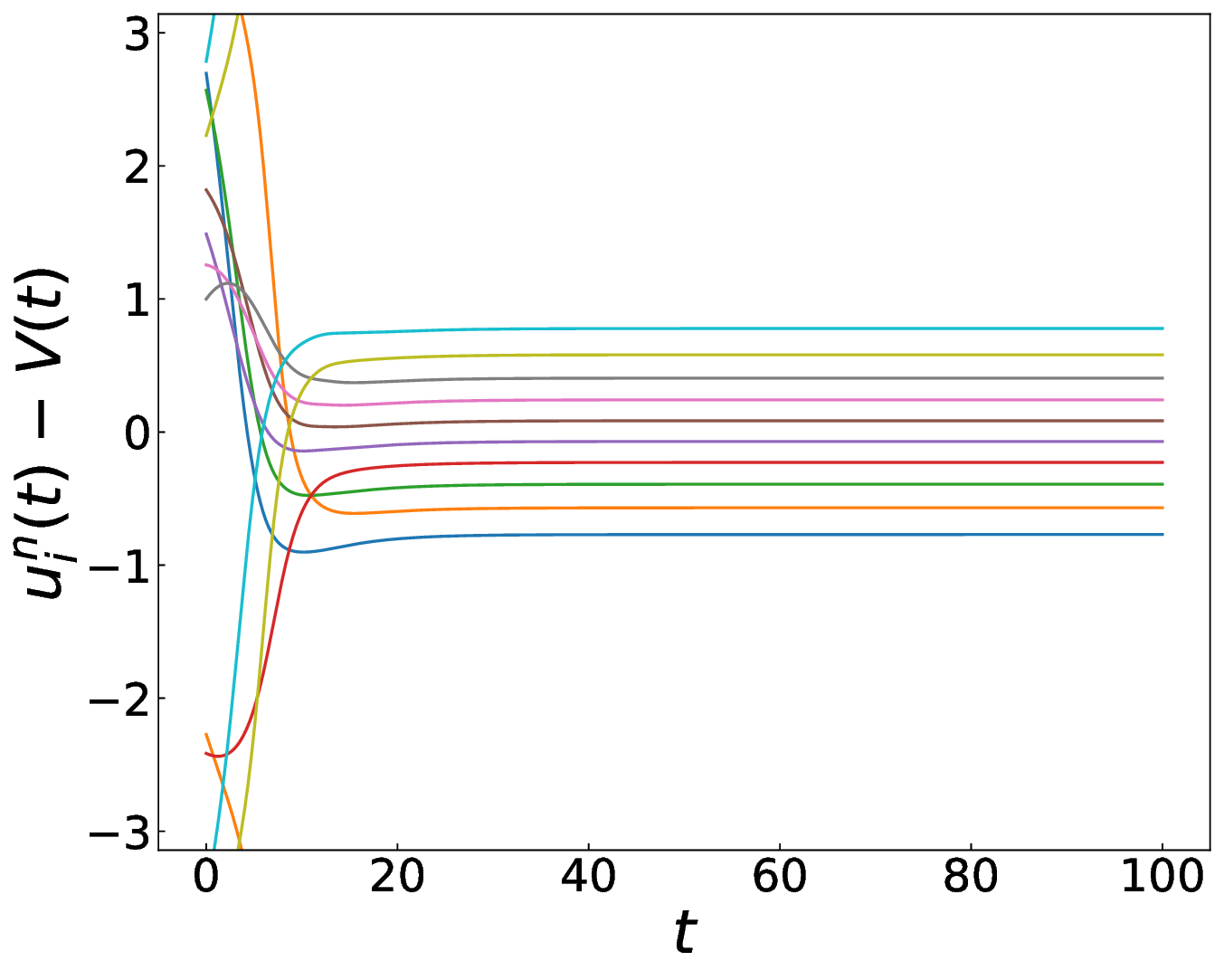}\\[1ex]
\includegraphics[width=0.47\textwidth]{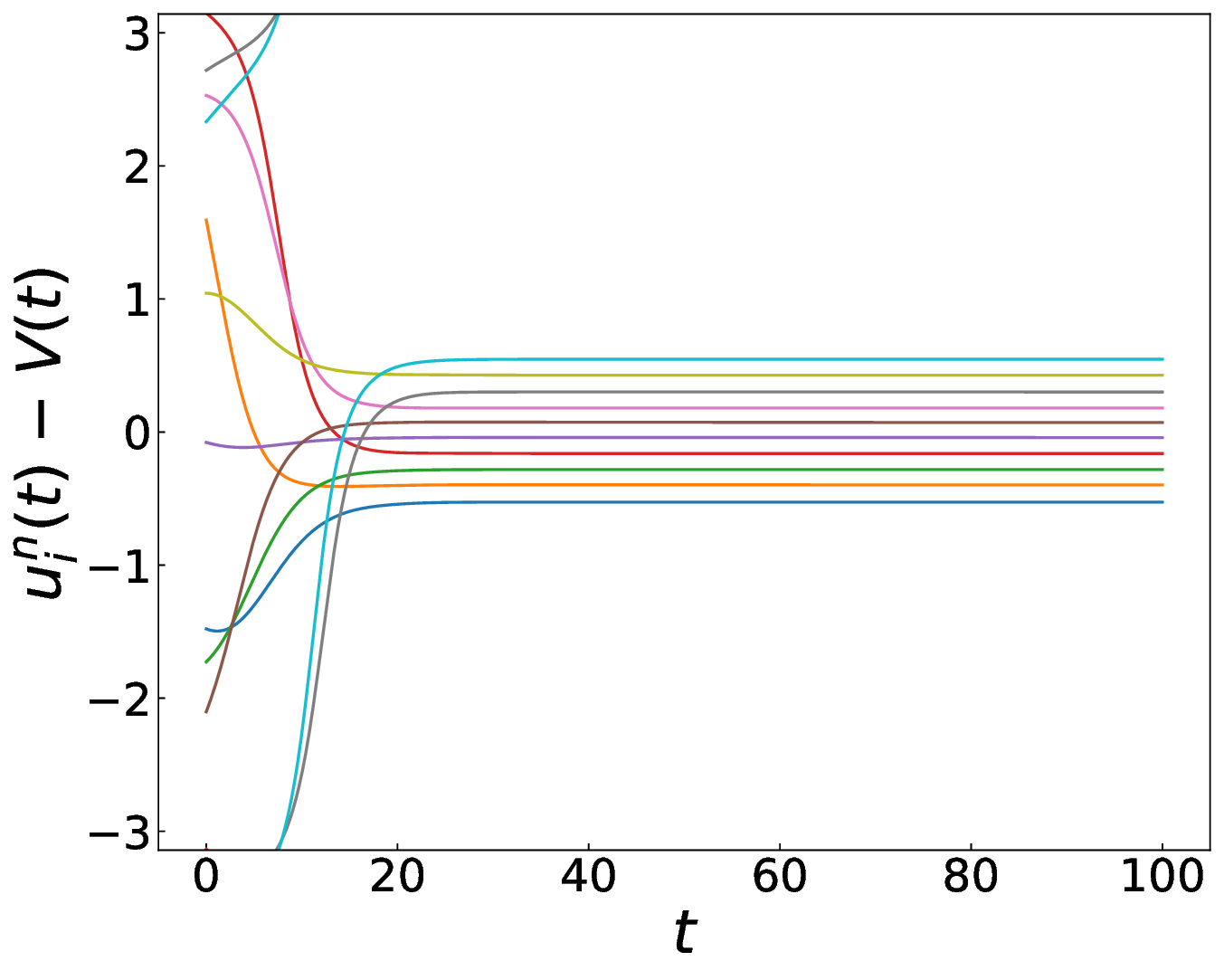}\qquad
\includegraphics[width=0.47\textwidth]{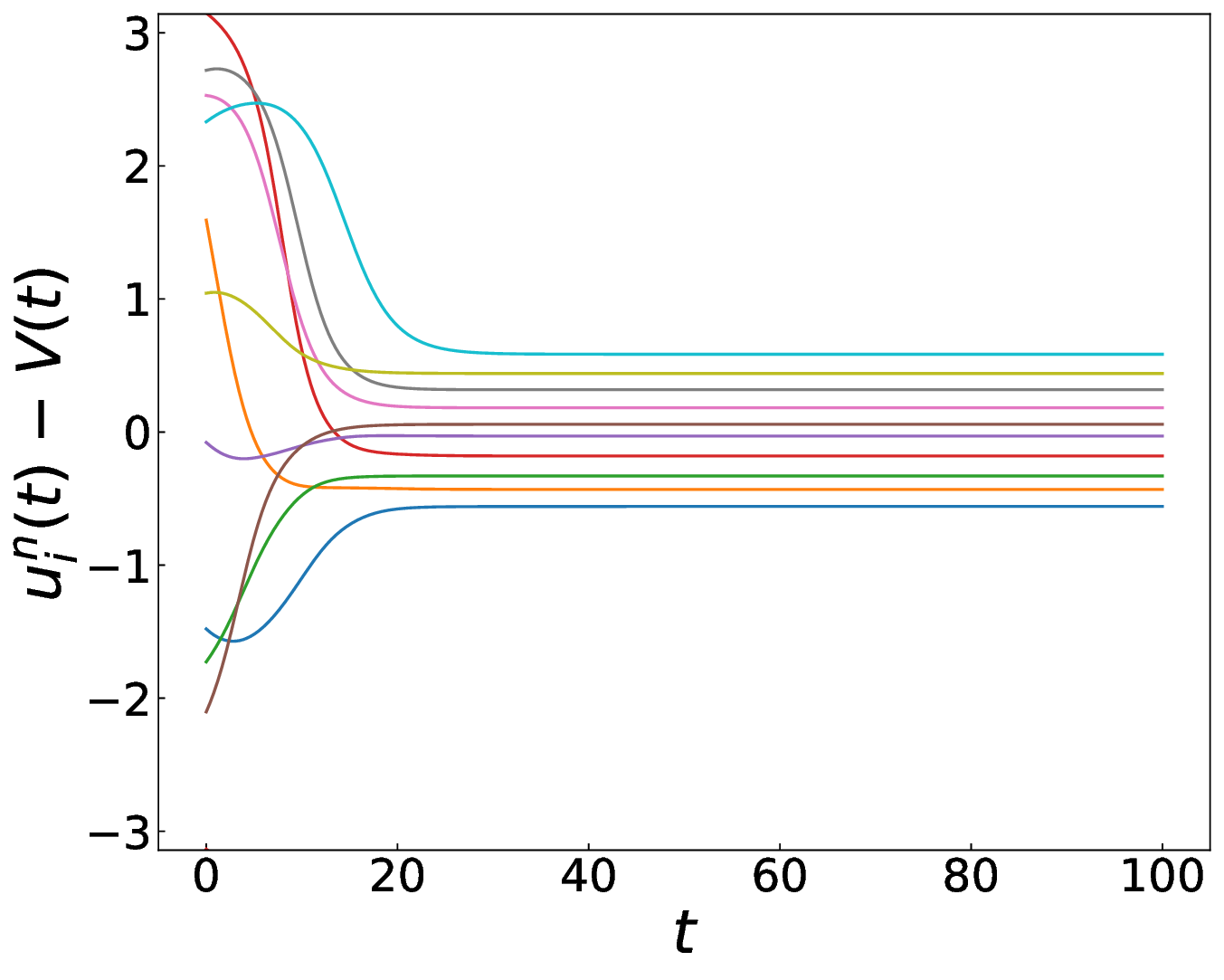}
\caption{
Numerical simulation results of the CKM \eqref{eqn:dsys}
 with $n=1000$, $K=0.5$, $V_1,b_0=1$, $V_0=1$ and $ b_1=0.2$:
(a) $(a,p)=(1,1)$ in case~(i);
(b) $(0.5,0.5)$ in case~(ii);
(c) $(a,p,\gamma)=(0.5,0.5,0.3)$ in case~(iii).
The time histories of every 100th node, from the 50th to the 950th node,
are shown in different colors.}
\label{fig:5b}
\end{figure}

In Figs.~\ref{fig:5b}(a), (b) and (c),
 we give the time-histories of  every 100th node (from 50th to 950th) in cases~(i), (ii) and (iii), respectively,
 for $b_1=0.2$, in which the asymptotically stable solution $u=U(x)+V(t)$ exists in the CL \eqref{eqn:csys}.
The ordinates represent the deviations of the responses from the desired motion,
 $u_i^n(t)-V(t)$.
We observe that the responses rapidly converge to the synchronized solution
 around the desired motion, as predicted theoretically in Sections~3 and 5.

\begin{figure}
\centering
\includegraphics[width=0.47\textwidth]{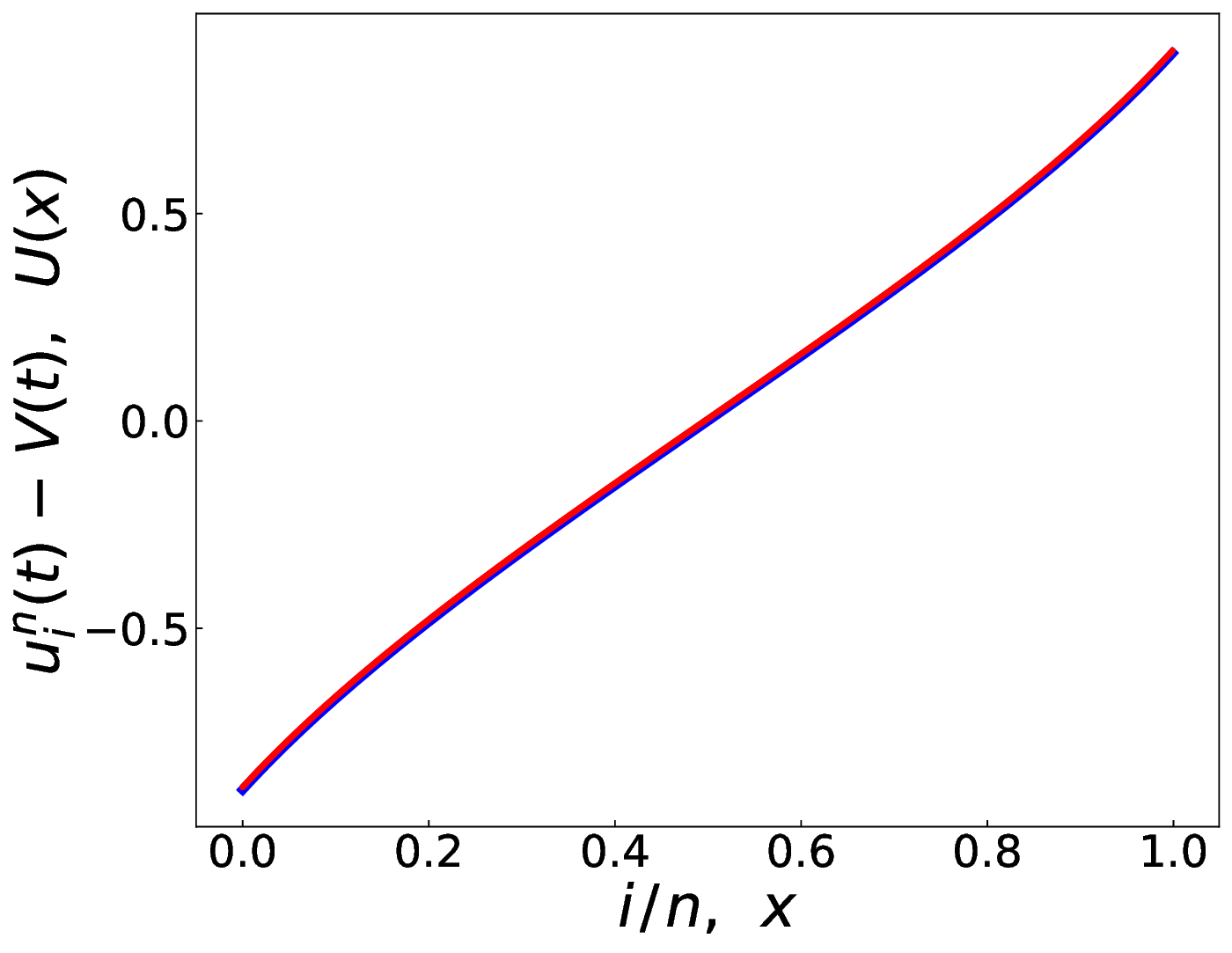}\\[1ex]
\includegraphics[width=0.47\textwidth]{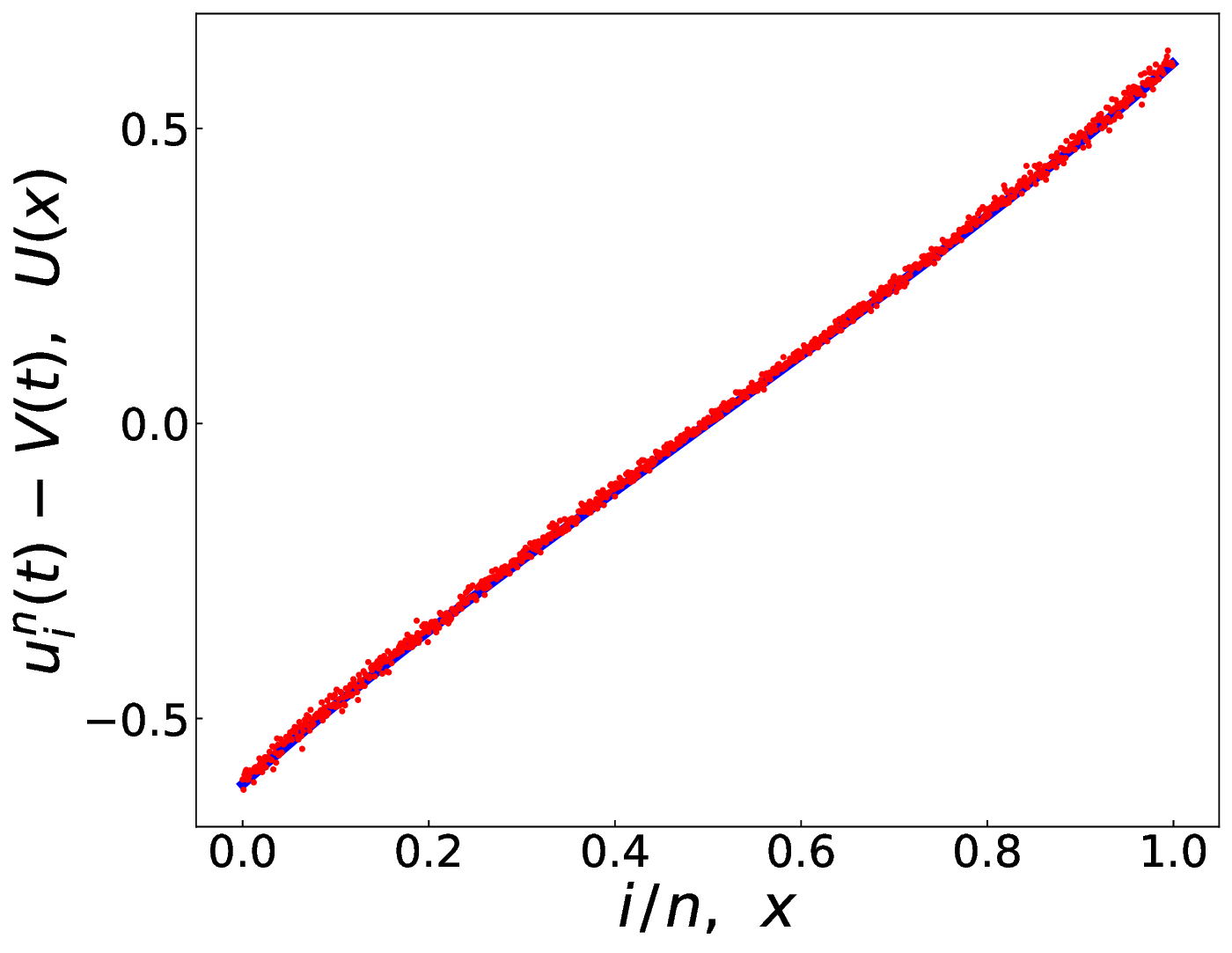}\qquad
\includegraphics[width=0.47\textwidth]{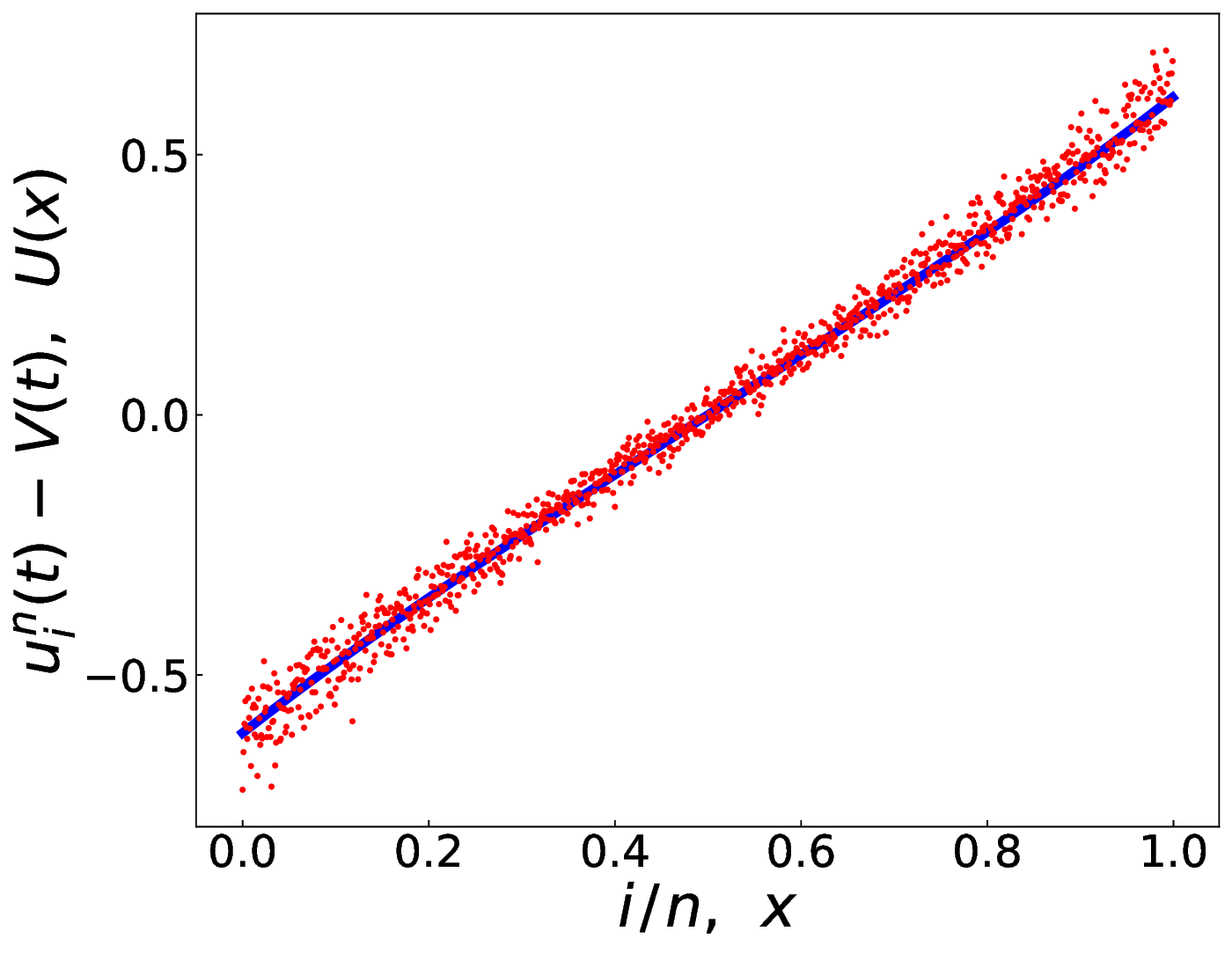}
\caption{
Deviations of steady-state responses from the desired motion in the CKM \eqref{eqn:dsys}
 with $n=1000$, $K=0.5$, $V_1,b_0=1$, $V_0=1$ and $ b_1=0.2$:
(a) $(a,p)=(1,1)$ in case~(i);
(b) $(0.5,0.5)$ in case~(ii);
(c) $(a,p,\gamma)=(0.5,0.5,0.3)$ in case~(iii).
Here $u_i^n(t)-V(t)$, $i\in[n]$, with $t=100$ are plotted as red dots.
The blue line represents the corresponding theoretical predictions
 computed from the synchronized solution $u=U(x)+V(t)$
 in the CL \eqref{eqn:csys}.}
\label{fig:5c}
\end{figure}

Figures~\ref{fig:5c}(a), (b) and (c) plot the deviations of the responses
 from the desired motion at $t=100$ for cases~(i), (ii) and (iii), respectively.
These states are regarded as steady states in view of Fig.~\ref{fig:5b}.
The initial condition and $b_1$ are chosen as in Fig.~\ref{fig:5b}.
The blue line in each panel gives the corresponding theoretical prediction
 computed from the synchronized solution $u=U(x)+V(t)$ in the CL \eqref{eqn:csys}.
In Figs.~\ref{fig:5c}(a) and (b), corresponding to cases~(i) and (ii),
 the numerical results are in excellent agreement with the theoretical predictions.
In Fig.~\ref{fig:5c}(c), corresponding to case~(iii),
 the agreement remains good, 
 although random fluctuations are visible.
Thus, we observe that the synchronized solution $u=U(x)+V(t)$
 behaves as it is an asymptotically stable one in the CKM \eqref{eqn:dsys}. 

\begin{figure}
\begin{center}
\includegraphics[width=0.47\textwidth]{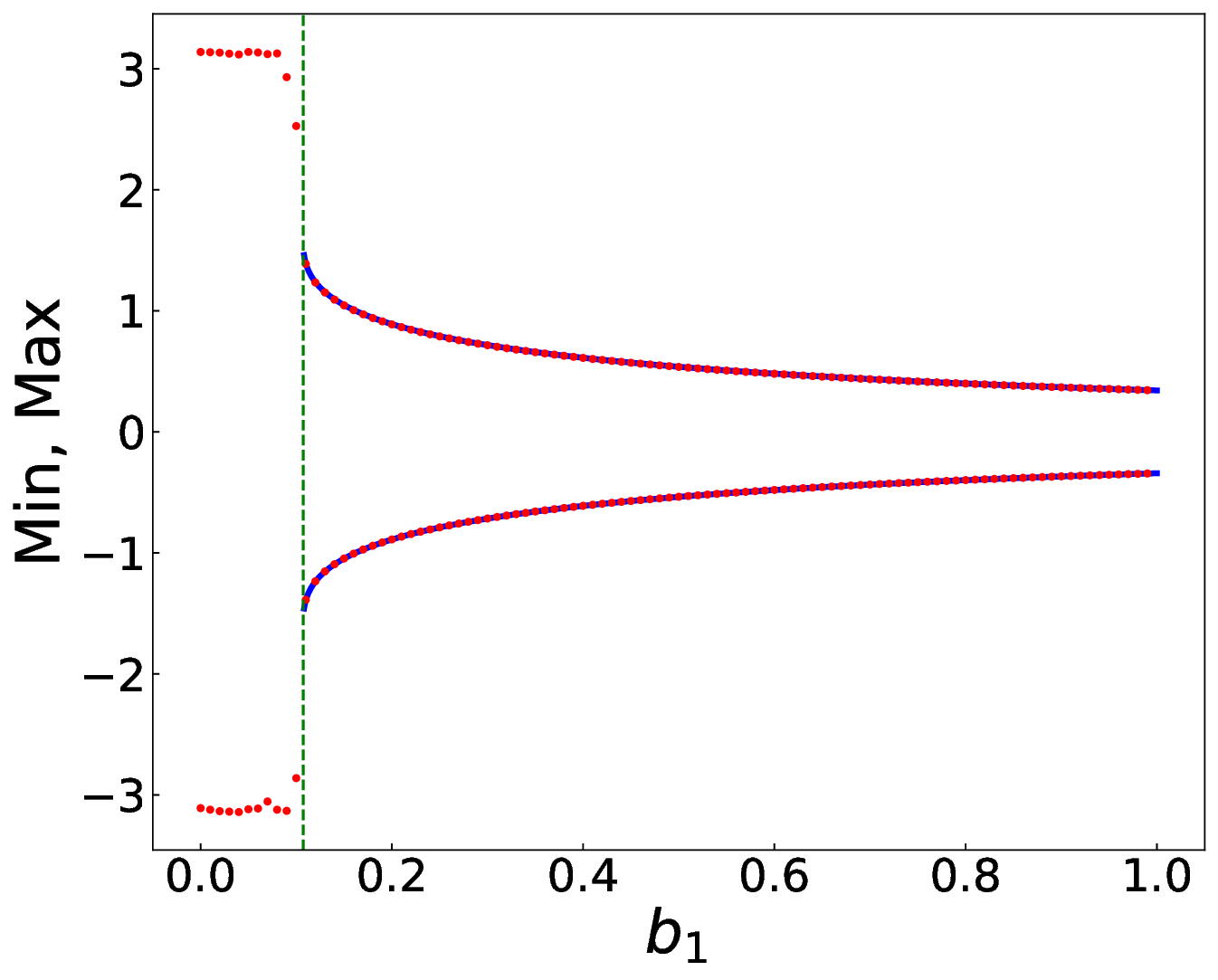}\\
\includegraphics[width=0.47\textwidth]{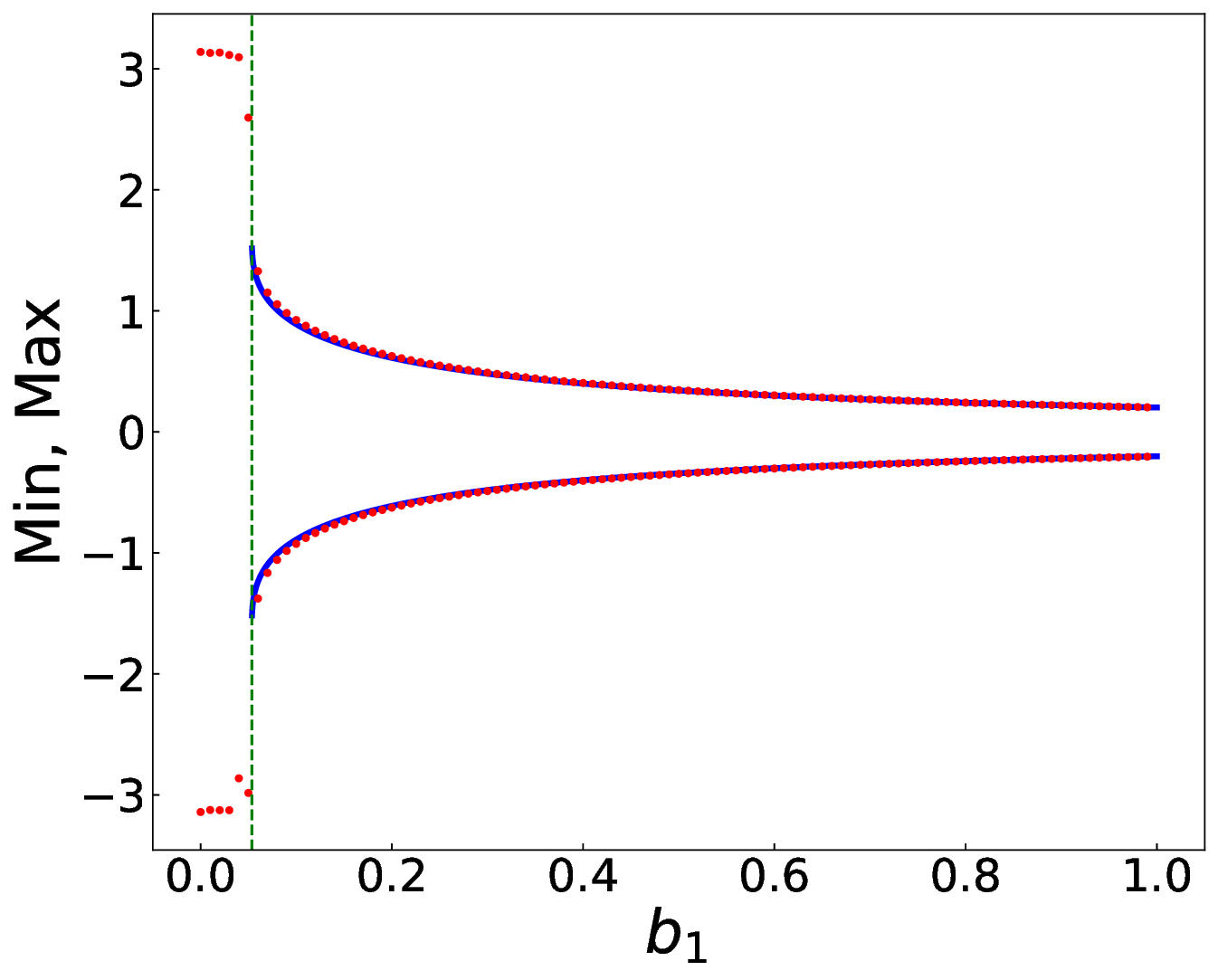}\qquad
\includegraphics[width=0.47\textwidth]{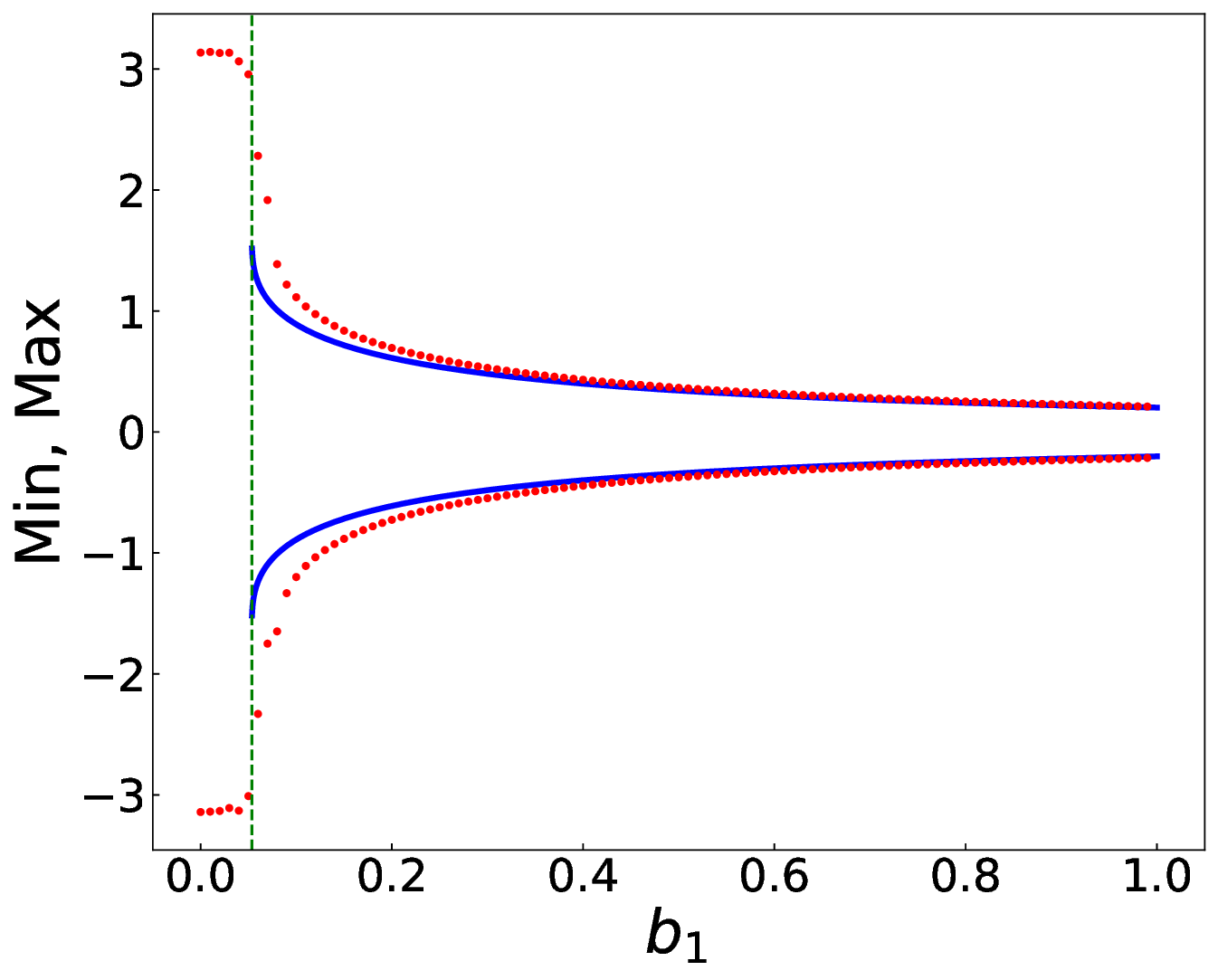}
\end{center}
\caption{
Maximal and minimal deviations of the steady-state responses
 from the desired motion in the CKM \eqref{eqn:dsys}
 with $n=1000$, $K=0.5$, $V_1,b_0=1$ and $V_0=1$
 when the feedback gain $b_1$ is changed:
(a) $(a,p)=(1,1)$ in case~(i);
(b) $(0.5,0.5)$ in case~(ii);
(c) $(a,p,\gamma)=(0.5,0.5,0.3)$ in case~(iii).
Here $\max_{i\in[n]}(u_i^n(t)-V(t))$
 and $\min_{i\in[n]}(u_i^n(t)-V(t))$, $i\in[n]$, for $t>0$ sufficiently large
 are plotted as red dots.
The blue line represents the corresponding theoretical predictions $\pm\Delta u$
 computed from the synchronized solution $u=U(x)+V(t)$
 in the CL \eqref{eqn:csys}, where $\Delta u$ is given by \eqref{eqn:du}.}
\label{fig:5d}
\end{figure}

Figures~\ref{fig:5d}(a), (b) and (c) display
 the maximal and minimal deviations of the steady-state responses
 from the desired motion in cases~(i), (ii) and (iii), respectively,
 when the feedback gain $b_1$ is varied.
The blue line in each figure represents the corresponding theoretical predictions
 $\pm\Delta u$ with
\begin{equation}
\Delta u=\arcsin\left(\frac{a}{2(pKC+b_1)}\right)
\label{eqn:du}
\end{equation}
computed from the synchronized solution $u=U(x)+V(t)$ in the CL \eqref{eqn:csys},
 where $C$ satisfies \eqref{eqn:C1},
 and the green line represents the critical value of $b_1$ given by \eqref{eqn:prop4a}
 at which the solution suddenly appears when $b_1$ is increased.
We observe that
 the steady-state responses tend to the desired motion as $b_1\to\infty$.
In Figs.~\ref{fig:5d}(a) and (b), corresponding to cases~(i) and (ii),
 the numerical results agree very well with the theoretical predictions.
In Fig.~\ref{fig:5d}(c), corresponding to case~(iii)
 the agreement remains good.

\section*{Acknowledgments}
The author thanks Donggeon Kim for his assistance in numerical computations.
This work was partially supported by JSPS KAKENHI Grant Number JP23K22409.



\section*{Data Availability}
Data sets generated during the current study are available from the author on reasonable request.

\appendix
 
\renewcommand{\theequation}{A.\arabic{equation}} 
\setcounter{equation}{0} 

\section{Derivation of \eqref{eqn:csol} and \eqref{eqn:dsol}}

We first derive the solution \eqref{eqn:csol} to the CL \eqref{eqn:csys}.
Substituting $u(t,x)=\tilde{U}(x)+V(t)$ with $V(t)=V_1t+V_0$ into \eqref{eqn:csys}, we have
\begin{align}
V_1=&\omega(x)
+pK\cos\tilde{U}(x)\int_I\sin\tilde{U}(y)\d y\notag\\
&-pK\sin\tilde{U}(x)\int _I\cos\tilde{U}(y)\d y
-b_1\sin\tilde{U}(x)+b_0.
\label{eqn:a1}
\end{align}
Integrating \eqref{eqn:a1} with respect to $x$ on $I$ and using \eqref{eqn:conb0},
 we obtain
\[
\int_I\sin\tilde{U}(x)\d x=0,
\]
so that Eq.~\eqref{eqn:a1} becomes
\[
V_1=\omega(x)-\tilde{C}\sin\tilde{U}(x)+b_0,
\]
where
\begin{align}
\tilde{C}=pK\int_I\cos\tilde{U}(x)\d x+b_1.
\label{eqn:acon1}
\end{align}
Obviously, $\tilde{C}\neq 0$ since $\omega(x)-V_1+b_0\equiv 0$ otherwise.
Hence, we have
\begin{equation}
\tilde{U}(x)=\arcsin\left(\frac{\omega(x)-V_1+b_0}{\tilde{C}}\right)
\label{eqn:tU1a}
\end{equation}
and
\begin{equation}
\tilde{U}(x)=\pi-\arcsin\left(\frac{\omega(x)-V_1+b_0}{\tilde{C}}\right),
\label{eqn:tU1b}
\end{equation}
where by \eqref{eqn:acon1}
\begin{align}
\tilde{C}=pK\int_I\sqrt{1-\left(\frac{\omega(x)-V_1+b_0}{\tilde{C}}\right)^2}\d x+b_1.
\label{eqn:acon1a}
\end{align}
and
\begin{align}
\tilde{C}=-pK\int_I\sqrt{1-\left(\frac{\omega(x)-V_1+b_0}{\tilde{C}}\right)^2}\d x+b_1.
\label{eqn:acon1b}
\end{align}
respectively.
Letting $\tilde{C}=pKC+b_1$ in \eqref{eqn:tU1a} and \eqref{eqn:acon1a},
 we obtain the solution \eqref{eqn:csol} and \eqref{eqn:C}.

We next derive the solution \eqref{eqn:dsol} to the CKM \eqref{eqn:dsys}
 with $w_{ij}^n=p$, $i,j\in[n]$, and $\alpha_n=1$.
We substitute $u_i^n(t)=\tilde{U}_i^n+V(t)$ into \eqref{eqn:dsys} to obtain
\begin{align}
V_1=&\omega_i^n
 +\frac{pK}{n}\cos\tilde{U}_i^n\sum_{j=1}^{n}\sin\tilde{U}_j^n
 -\frac{pK}{n}\sin\tilde{U}_i^n\sum_{j=1}^{n}\cos\tilde{U}_j^n
 -b_1\sin\tilde{U}_i^n+b_0.
\label{eqn:a2}
\end{align}
Summing \eqref{eqn:a2} from $i=1$ to $n$ and using \eqref{eqn:b0}, we obtain
\[
\sum_{i=1}^n\sin\tilde{U}_i^n=0,
\]
so that Eq.~\eqref{eqn:a2} becomes
\[
V_1=\omega_i^n
 -\tilde{C}_\D\sin\tilde{U}_i^n+b_0,
\]
where
\begin{equation}
\tilde{C}_\D=\frac{pK}{n}\sum_{i=1}^n\cos\tilde{U}_i^n+b_1.
\label{eqn:acon2}
\end{equation}
Obviously, $\tilde{C}_\D\neq 0$ since $\omega_i^n-V_1+b_0= 0$ for any $i\in[n]$ otherwise.
Hence we have
\begin{equation}
\tilde{U}_i^n=\arcsin\left(\frac{\omega_i^n-V_1+b_0}{\tilde{C}_\D}\right),\quad
i\in[n],
\label{eqn:tU2a}
\end{equation}
and
\begin{equation}
\tilde{U}_i^n=\pi-\arcsin\left(\frac{\omega_i^n-V_1+b_0}{\tilde{C}_\D}\right),\quad
i\in[n],
\label{eqn:tU2b}
\end{equation}
where by \eqref{eqn:acon2}
\begin{equation}
\tilde{C}_\D=\frac{pK}{n}\sum_{j=1}^n
 \sqrt{1-\left(\frac{\omega_j^n-V_1+b_0}{\tilde{C}_\D}\right)^2}+b_1
\label{eqn:acon2a}
\end{equation}
and
\begin{equation}
\tilde{C}_\D=-\frac{pK}{n}\sum_{j=1}^n
 \sqrt{1-\left(\frac{\omega_j^n-V_1+b_0}{\tilde{C}_\D}\right)^2}+b_1,
\label{eqn:acon2b}
\end{equation}
respectively.
Letting $\tilde{C}_\D=pKC_\D+b_1$ in \eqref{eqn:tU2a} and \eqref{eqn:acon2a},
 we obtain the solution \eqref{eqn:dsol} and \eqref{eqn:CD0}.


\end{document}